\newtheorem{theorem}{Theorem}[section]
\newtheorem{lemma}[theorem]{Lemma}
\newtheorem{proposition}[theorem]{Proposition}
\newtheorem{remark}[theorem]{Remark}
\newtheorem{corollary}[theorem]{Corollary}
\numberwithin{equation}{section}
\newcommand{\T}{{\mathbb T}}
\newcommand{\N}{{\mathbb N}}
\newcommand{\Z}{{\mathbb Z}}
\newcommand{\R}{{\mathbb R}}
\newcommand{\C}{{\mathbb C}}
\newcommand{\pa}{{\partial}}
\newcommand{\na}{{\nabla}}
\newcommand{\eps}{{\varepsilon}}
\newcommand{\be}{\begin{equation}}
\newcommand{\ee}{\end{equation}}
\newcommand{\ba}{\begin{aligned}}
\newcommand{\ea}{\end{aligned}}
\def\Im{{\cal I}m \,}
\def\Re{{\rm Re}  \,}
\def\bul{{$\bullet$\hspace*{2mm}}}
\def\mspace{\medskip \noindent}
\title{Sobolev stability of Prandtl expansions \\
 for the steady Navier-Stokes equations}
\author{\footnote{Universit\'e Paris Diderot and IUF, Sorbonne Paris Cit\'e, Institut de Math\'ematiques de Jussieu-Paris Rive Gauche, UMR 7586, F- 75205 Paris, France} David Gerard-Varet \and \footnote{Department of Mathematics, Graduate School of Science, Kyoto University,
Kitashirakawa Oiwake-cho, Sakyo-ku, Kyoto 606-8502, Japan } Yasunori Maekawa}
\begin{document}
\maketitle

\begin{abstract}
We show the $H^1$ stability of shear flows of Prandtl type: $\displaystyle U^\nu = \left(U_s(y/\sqrt{\nu}), 0\right)$,
in the steady  two-dimensional Navier-Stokes equations, under the natural assumptions that $U_s(Y) > 0$ for $Y > 0$, $U_s(0) = 0$, and $U'_s(0) > 0$.  Our result is in sharp contrast with the unsteady ones, in which at most Gevrey stability can be obtained, even under global monotonicity and concavity hypotheses. It  provides the first positive answer to the  inviscid limit problem in Sobolev regularity  for a non-trivial class of steady Navier-Stokes flows with no-slip boundary condition. 
\end{abstract}
\section{Introduction and main result}\label{sec.intro}
Our concern in this paper is the vanishing viscosity limit of the  two-dimensional steady Navier-Stokes equations:
\begin{align}\label{eq.NS}
\begin{cases}
& v^\nu \cdot \nabla v^\nu - \nu \Delta v^\nu + \nabla q^\nu = g^\nu \,, \quad (x,y) \in \T_\kappa \times \R_+\,,\\
& {\rm div}\, v^\nu =0\,, \quad (x,y) \in \T_\kappa\times \R_+\,,\\
& v^\nu|_{y=0} =0.
\end{cases}
\end{align}
 Here $\T_\kappa=\R/ (2\pi \kappa) \Z$, $\kappa>0$, is a torus with periodicity $2\pi \kappa$, $\R_+=\{y\in \R~|~y>0\}$, while $v^\nu =(v^\nu_1,v^\nu_2)$ and $q^\nu$ are respectively the unknown velocity field and pressure field of the fluid. The positive constant $\nu$ is the viscosity coefficient.  The vector field $g^\nu$ is an external force, decaying fast enough at infinity. The usual no-slip condition is prescribed at $y=0$. 
 
\mspace
Understanding the behaviour of   $v^\nu$  for small $\nu$ is a classical and difficult problem: $\na v^\nu$ tends to blow-up near the boundary as $\nu \rightarrow 0$,  and the dynamics of this so-called boundary layer is uneasy to analyze. A main step forward  was made by L. Prandtl in 1904, who suggested asymptotics of the form 
\begin{equation} \label{asymptprandtl} 
\begin{aligned}
v^\nu(x,y) & \sim \bigl( V_1(x,y/\sqrt{\nu}), \sqrt{\nu} V_2(x,y/\sqrt{\nu})\bigr) \quad \text{near the boundary,} \\
v^\nu(x,y) & \sim v^0(x,y) \quad \text{away from the boundary}, 
\end{aligned}
\end{equation}
where  $V = (V_1, V_2)(x,Y)$ depends on a rescaled variable $Y = y /\sqrt{\nu}$. Hence, in the Prandtl model, the boundary layer has a characteristic scale $\sqrt{\nu}$. Moreover, it connects to an Euler solution $v^0$ as $Y \rightarrow +\infty$. By plugging the expansion in \eqref{eq.NS}, one obtains a kind of reduced Navier-Stokes system on $V$,  the Prandtl equation, now classical in fluid dynamics.  Nevertheless, as pointed out by Prandtl himself, this formal  asymptotics is expected to have  a limited range of validity, due to an instability phenomenon called boundary layer separation. This instability is typical of flows around obstacles. Roughly, under an adverse pressure gradient in the boundary layer, past a certain distance $x= x_*$ from the leading edge of the obstacle,  the stress $\pa_y v_1^\nu\vert_{y=0}$ may vanish. This leads to the appearance  of a reverse flow for $x > x_*$,  and detachment of the boundary layer streamlines: see \cite[page 39]{Schlichting} for a more detailed description of the underlying physics and illustrations. 

\mspace
Mathematically, the importance of this phenomenon has been well recognized in the analysis of  the steady Prandtl model.  On one hand, it is known from the works of Oleinik \cite{Ole} that given a horizontal velocity $V_1$ at $x=0$ satisfying $V_1\vert_{x=0} > 0$, $\pa_Y V_1\vert_{x=0,Y=0} > 0$, one can construct a local in $x$ smooth solution of the Prandtl equation. This result is based on the so-called Von Mises transform, which turns the Prandtl equation into a  nonlinear heat equation, with $x$ as an  evolution variable.  Moreover, this smooth solution exists as long as $V_1 > 0$ and 
$\pa_Y V_1\vert_{Y=0} > 0$. On the other hand, there exists  blowing-up solutions: it was established recently in \cite{DalMas}, see also \cite{Gol,MaShi,E}. Still, these results leave aside the behaviour of the full system \eqref{eq.NS}, and the justification of the Prandtl asymptotics \eqref{asymptprandtl} prior to separation. The purpose of the present paper is to contribute to fill in this gap.

\mspace
Let us stress that most recent mathematical results on the validity of the Prandtl asymptotics are actually related to the unsteady Navier-Stokes equations. In such  case, it is now well-understood that the justification of the Prandtl approach requires stringent assumptions on the data. The underlying reason is the presence of many hydrodynamic instabilities. We refer for instance to \cite{Cowley,HoHu, GaSaSci} for discussions and numerics around the various blow-up scenarios. Even to hope for short time  stability, one must impose either restrictions on the structure of the perturbations \cite{LoMaNu, MaTa}, or strong regularity assumptions. As regards the well-posedness of the Prandtl model, we refer to \cite{LoCaSa, KuVi,GeDo,AlWaXuYa, MaWo,XiZa, GeMa,Tong1} and citations therein. As regards the full Navier-Stokes model, a complete  justification of the Prandtl theory was obtained  for analytic data \cite{SaCa1,SaCa2,WaWaZh} and for the initial vorticity supported away from the boundary \cite{Ma, FeTaZha}. 
On the contrary, counterexamples to the $H^1$ stability of Prandtl expansions of shear flow type was provided by Grenier in \cite{Gre}, using boundary layer profiles with inflexion points.  Even in the favourable case of monotonic and concave boundary layer profiles, the boundary layer expansion \eqref{asymptprandtl} is  not stable in a Sobolev framework. This is due to a viscous instability mechanism, the so-called Tollmien-Schlichting wave. 
This instability, identified in the first half of the 20th century \cite{DrRe}, was examined in a nice article by Grenier, Guo and Nguyen \cite{GGN2014}. Properly rescaled, their analysis provides highly growing eigenmodes of the linearized Navier-Stokes system around a shear flow of Prandtl type. These eigenmodes have high $x$-frequency    $n \sim \nu^{-3/8}$, and associated growth rate $\sigma \sim n^{2/3} \sim  \nu^{-1/4}$. For arbitrary small  $\nu$, these high frequencies must have very small initial amplitude to be controlled on a time scale independent of $\nu$: namely, one can only hope for a short time stability result in functional spaces of  Gevrey class $3/2$ in $x$. A result in this direction was obtained recently by the authors and N. Masmoudi in \cite{GeMaMa}.  See \cite{GrNg} for related statements.

\mspace
In view of the complexity of the unsteady framework, one could think that justifying Prandtl expansions in the steady case should require stringent assumptions. Surprisingly, we will be able to show local in space Sobolev stability of shear flow solutions of Prandtl type: 
$U^\nu(x,y) = \left(U_s(y/\sqrt{\nu}), 0\right)$, 
under the main assumptions $U_s(Y) > 0$ for $Y > 0$, $U_s(0) = 0$, and  $U'_s(0) > 0$. These assumptions are somehow minimal in view of the previous discussion: they forbid reverse flow and boundary layer separation. As far as we know, this is the first boundary layer stability result  for the steady Navier-Stokes equations with the usual no-slip conditions\footnote{see Remark \ref{rem.Guo} for an update.}. {The only previous articles that we are aware of} are  \cite{GuNgu2017,Iyer2017}, dedicated to inhomogeneous Dirichlet conditions. For instance, Guo and Nguyen consider  in \cite{GuNgu2017} the steady Navier-Stokes equations in a half-plane, but with a positive Dirichlet datum for the horizontal velocity. They construct general boundary layer expansions for this problem and prove their Sobolev stability through the use of original energy functionals. Let us mention that similar ideas are encountered in the context of the non-stationary MHD equations, where Sobolev stability can be recovered if the magnetic field has a non-vanishing tangential component at the boundary: see \cite{GePr,Tong2,Tong3}. In the present paper, the analysis is of a different nature, and centered on  handling the degeneracy due to  the homogeneous Dirichlet condition.

\mspace
We now state precisely our main result. Let $U_s = U_s(Y) \in C^2 (\overline{\R_+})$ such that 
\begin{align}
& U_s (0) =0 \,, \qquad  U_s>0 ~~ {\rm in}~Y>0\,, \qquad \lim_{Y\rightarrow \infty} U_s (Y) = U_E > 0\,, \label{assume.1}\\
& \pa_Y U_s (0) >0 \,,\label{assume.2}\\
&  \sum_{k=1,2} \sup_{Y\geq 0}  (1+Y)^{3}  |\pa_Y^k U_s (Y) |  <\infty\,.\label{assume.3}
\end{align}
From the continuity and \eqref{assume.2} we have $\pa_Y U_s >0$ on $0\leq Y\leq 4 Y_0$ for some $Y_0\in (0,1]$.
This nondegeneracy near the boundary will be crucial. We then consider the shear flow  
\begin{align}\label{def.U_s'}
U^\nu = (U_s^\nu (y), 0) \,, \qquad U_s^\nu (y) = U_s (y/\sqrt{\nu}). 
\end{align} 
Obviously, \eqref{def.U_s'} can be seen as a solution of \eqref{eq.NS}, setting  $\displaystyle g^\nu =  - \nu \pa^2_y U^\nu$ and $q^\nu = 0$. The goal of the paper is to establish stability estimates for this solution of boundary layer type. Denoting  $\displaystyle u^\nu = v^\nu - U^\nu$ the perturbation  induced by  $\displaystyle f^\nu =  g^\nu + \nu \pa^2_y U^\nu$, we get 
\begin{align}\label{eq.pNS}
\begin{cases}
& U_s^\nu \pa_x u^\nu + u^\nu_2 \pa_y U_s^\nu {\bf e}_1 - \nu \Delta u^\nu + \nabla p^\nu = - u^\nu \cdot \nabla u^\nu + f^\nu\,,  \quad (x,y) \in \T_\kappa \times \R_+\,,\\
& {\rm div}\, u^\nu =0\,, \quad (x,y) \in \T_\kappa\times \R_+\,,\\
& u^\nu|_{y=0} =0.
\end{cases}
\end{align}
Here ${\bf e}_1=(1,0)$. We then have to specify a functional setting, with $2\pi \kappa$ periodicity in $x$.  Let $\mathcal{P}_n$, $n\in \Z$, be the orthogonal projection on the $n$-th Fourier  mode in variable $x$:
\begin{align}\label{def.P_n}
(\mathcal{P}_n u ) (x,y) = u_n (y) e^{i\tilde n x}\,, \qquad \tilde n = \frac{n}{\kappa}\,, \qquad u_n (y) = \frac{1}{2\pi \kappa} \int_0^{2\pi \kappa} u (x,y) e^{-i \tilde n x} d x\,,
\end{align}
The divergence-free and homogeneous Dirichlet conditions imply $u_0 =(u_{0,1},0)$. Setting  
\begin{align}\label{def.Q_n}
\mathcal{Q}_0 u = (I-\mathcal{P}_0) u\,,
\end{align}
where $I$ is the identity operator, we can identify $u$ with the couple $(u_{0,1}, \mathcal{Q}_0 u)$. With this identification we introduce 
\begin{align}\label{def.X}
\begin{split}
X & = \big \{ (u_{0,1}, \mathcal{Q}_0 u) \in  BC (\overline{\R_+}) \ \times W^{1,2}_0 (\T_\kappa \times \R_+)^2 ~|~   \quad \pa_y u_{0,1} \in L^2 (\R_+)\,, \qquad  u_{0,1}|_{y=0}  = 0\,,\\
& \quad \| u\|_X = \| u_{0,1}\|_{L^\infty(\R_+)} + \| \pa_y u_{0,1} \|_{L^2(\R_+)} + \sum_{n\ne 0} \| u_n \|_{L^\infty(\R_+)}  + \| \mathcal{Q}_0 u \|_{W^{1,2}(\T_\kappa \times \R_+)} <\infty \big \}\,,
\end{split}
\end{align}
where the Sobolev space $W^{1,2}_0 (\T_\kappa \times \R_+)$ is defined as the subspace of $W^{1,2} (\T_\kappa \times \R_+)$ with functions having the zero boundary trace on $y=0$.
For simplicity we assume that $f^\nu = \mathcal{Q}_0 f^\nu$ below, though it is not difficult to extend our result to a general case by imposing a suitable condition on $f^\nu_0 (y)$. 
\begin{theorem}\label{thm.main} There exist positive numbers $\kappa_0, \nu_0, \epsilon$ such that the following statement holds for $0<\kappa\leq \kappa_0$ and $0<\nu\leq \nu_0$.
If $f^\nu = \mathcal{Q}_0 f^\nu$ and $\| f^\nu \|_{L^2} \leq \epsilon \nu^\frac34 |\log \nu|^{-1}$ then there exists a unique solution $(u^\nu, \nabla p^\nu) \in \big ( X\cap W^{2,2}_{loc} (\T_\kappa\times \R_+)^2\big ) \times L^2 (\T_\kappa\times\R_+)^2$ to \eqref{eq.pNS} such that 
\begin{align}\label{est.thm.main.1}
\begin{split}
& \| u^\nu_{0,1} \|_{L^\infty} + \nu^\frac14 \| \pa_y u^\nu_{0,1}\|_{L^2}  \\
& \qquad + \sum_{n\ne 0} \| u_n^\nu \|_{L^\infty} + \nu^{-\frac14} \| \mathcal{Q}_0 u^\nu \|_{L^2} + \nu^\frac14 \| \nabla \mathcal{Q}_0 u^\nu \|_{L^2}  \leq \frac{C|\log \nu|^\frac12}{\nu^\frac{1}{4}} \| f^\nu \|_{L^2}\,,
\end{split}
\end{align}
Here $C$ is independent of $\nu$ and $\kappa$.
\end{theorem}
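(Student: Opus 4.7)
The overall strategy is to invert the steady linearized operator
\[
\mathcal L u := U_s^\nu \partial_x u + u_2 (U_s^\nu)' \mathbf{e}_1 - \nu \Delta u + \nabla p,
\]
subject to $\mathrm{div}\, u = 0$ and $u|_{y=0}=0$, and then close \eqref{eq.pNS} by a Banach fixed-point argument. Denoting by $\|\cdot\|_*$ the weighted norm on the left-hand side of \eqref{est.thm.main.1}, I would build a linear solution map $T : F \mapsto u$ satisfying $\|TF\|_* \leq C\nu^{-1/4}|\log\nu|^{1/2}\|F\|_{L^2}$. Since every Fourier mode $u_n$ is controlled in $L^\infty_y$ and $\nabla u$ is controlled in $L^2$ with a loss of only $\nu^{-1/4}$ relative to $\|\cdot\|_*$, one obtains the bilinear estimate $\|u\cdot\nabla u\|_{L^2}\lesssim \nu^{-1/4}\|u\|_*^2$; combining this with the hypothesis $\|f^\nu\|_{L^2}\le \epsilon \nu^{3/4}|\log\nu|^{-1}$ yields a contraction of the map $u\mapsto T(-u\cdot\nabla u + f^\nu)$ on a ball of radius $\sim \epsilon \nu^{1/2}|\log\nu|^{-1/2}$ in the $\|\cdot\|_*$ topology, whose unique fixed point is the desired solution.

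For the linear analysis, I would Fourier-expand in $x$ and treat each mode separately. The $n=0$ equation, driven in the iteration only by the mean of the nonlinearity (since $f^\nu = \mathcal Q_0 f^\nu$), reduces to a one-dimensional Laplace problem for $u_{0,1}(y)$ with homogeneous Dirichlet condition and is handled by two integrations against $y$. For each $n\ne 0$, I would introduce the stream function $\psi_n$ with $u_n = (\partial_y \psi_n, -i\tilde n \psi_n)$, which reduces the problem to the Orr--Sommerfeld equation
\[
\nu (\partial_y^2 - \tilde n^2)^2 \psi_n - i\tilde n U_s^\nu (\partial_y^2 - \tilde n^2)\psi_n + i\tilde n (U_s^\nu)'' \psi_n = G_n,\qquad \psi_n(0)=\psi_n'(0)=0,
\]
together with decay at infinity. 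All of the delicate work lies in inverting this ODE uniformly in $n$ and $\nu$.

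The main obstacle I anticipate is precisely this uniform Orr--Sommerfeld estimate. Because $U_s(0)=0$, the Rayleigh critical layer is pinned at the wall, coinciding with the location of the viscous boundary layer; in the unsteady problem this coincidence is the very mechanism behind the Tollmien--Schlichting instability. Here the spectral parameter is forced to be $i\tilde n$ (purely imaginary), which rules out unstable eigenmodes, but the critical-layer degeneracy still demands a careful construction. My plan is to split the nonzero frequencies into regimes according to the balance between the biharmonic $\nu(\partial_y^2 - \tilde n^2)^2$ and the Rayleigh block $i\tilde n [U_s^\nu(\partial_y^2 - \tilde n^2) - (U_s^\nu)'']$. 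When viscosity dominates at sufficiently large $|\tilde n|$, direct energy estimates using the coercivity of $-\nu\Delta$ and the positivity of $U_s^\nu$ close the bound. In the complementary regime, one first inverts the Rayleigh equation with the no-penetration condition $\psi(0)=0$, exploiting $U_s^\nu > 0$ in $y>0$, the nondegeneracy $U_s'(0)>0$ (which furnishes $U_s^\nu(y) \gtrsim y/\sqrt\nu$ on $0\leq y\leq 4Y_0\sqrt\nu$), the decay hypothesis \eqref{assume.3}, and Hardy-type inequalities; one then restores the violated no-slip condition $\psi'(0)=0$ by adding an Airy-type viscous corrector on the scale $(\nu/|\tilde n U_s'(0)|)^{1/3}$. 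The hypothesis $\kappa \leq \kappa_0$ ensures that $|\tilde n| = |n|/\kappa$ is large for every $n\ne 0$, eliminating long-wave pathologies of the Rayleigh equation. Finally, the $|\log\nu|^{1/2}$ loss in the final bound reflects a logarithmic resonance between the Prandtl scale $\sqrt\nu$ and the Airy scale, and should emerge when summing the per-mode estimates over $n\ne 0$.
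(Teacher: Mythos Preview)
Your overall architecture matches the paper's: linear solution map mode-by-mode, Orr--Sommerfeld reduction for $n\ne 0$, a frequency split with energy methods at high $|\tilde n|$ and a Rayleigh/Airy construction below, and then a contraction on the weighted norm. The bilinear estimate and the mechanism for the $|\log\nu|^{1/2}$ loss (Cauchy--Schwarz when summing the per-mode $L^\infty$ bounds over $|\tilde n|\lesssim \nu^{-3/4}$) are also correctly identified.

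There is, however, one genuine misconception. You write that $\kappa\le\kappa_0$ makes $|\tilde n|$ large and thereby ``eliminates long-wave pathologies of the Rayleigh equation.'' It does not. After the boundary-layer rescaling $Y=y/\sqrt{\nu}$, the Rayleigh operator reads $U_s(\partial_Y^2-\alpha^2)-U_s''$ with $\alpha=\tilde n\sqrt{\nu}$, and $\alpha$ can be arbitrarily small no matter how large $|\tilde n|$ is, simply by taking $\nu$ small. The limit $\alpha\to 0$ is a genuine singular perturbation: the formal limit $U_s\partial_Y^2\varphi - U_s''\varphi=f$ generically has no decaying solution with $\varphi(0)=0$. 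The paper's Rayleigh estimates for $0<\alpha\le 1$ carry an unavoidable $\alpha^{-1}$ factor, but only in front of $|\int_0^\infty f\,dY|$; the whole Rayleigh--Airy iteration is then designed so that every Rayleigh solve is fed a zero-average source (this is why intermediate Airy solves under Neumann conditions are inserted, and why the iteration is not the single ``Rayleigh then Airy corrector'' step you describe). What smallness of $\kappa$ actually buys is smallness of $\eps=1/\tilde n$, which is the contraction parameter of the iteration (each loop gains $O(\eps^{1/3})$).

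Relatedly, your one-pass ``invert Rayleigh, then add an Airy corrector for the no-slip condition'' will not close as stated. The paper first solves a modified Orr--Sommerfeld problem (Dirichlet on $\psi$, Neumann on $(\partial_Y^2-\alpha^2)\psi$) by the iterative splitting, and only then restores $\psi'(0)=0$ by combining two explicitly constructed homogeneous solutions (a slow mode built from the Rayleigh solution and a fast mode built from the Airy profile), showing their boundary Wronskian is bounded below. Without the zero-average bookkeeping in the iteration and the separate slow/fast mode construction, the small-$\alpha$ regime would blow up your estimates.
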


\begin{remark}{\rm The main structural assumptions of our stability theorems are \eqref{assume.1} and  \eqref{assume.2}, which are natural in view of the previous comments on boundary layer separation. Another important requirement is the smallness condition on $\kappa$: it means that our stability result is only local in space (although on a lengthscale independent of $\nu$). An interesting question is wether this locality requirement is only a technical restriction of our stability method, or if instabilities are possible at larger wavelengths.}   
\end{remark} 

\begin{remark}
{\rm The perturbation $u^\nu$ converges (at least in a weak sense) to a constant shear flow at infinity: 
\begin{equation} \label{eq.limvinfty} 
\lim_{y \rightarrow +\infty} v^\nu = (c^\nu,0). 
\end{equation}
First, the requirement $\mathcal{Q}_0 u^\nu \in W^{1,2}$ implies that  $\mathcal{Q}_0 u^\nu$ goes to zero at infinity. Then, as regards the $x$-average $u_0^\nu = (u_{0,1}^\nu,0)$, we deduce from the first line of \eqref{eq.pNS} and the fact  that $f^\nu_0 = 0$: 
$$ - \nu \pa^2_y u_{0,1}^\nu = - \pa_y (Q_0 u^\nu_2 Q_0 u^\nu_1)_0. $$
As $\pa_y u_{0,1} \in L^2$, we can integrate this identity from $y=+\infty$ to deduce 
$$  -\nu \pa_y u_{0,1}^\nu = - (Q_0 u^\nu_2 Q_0 u^\nu_1)_0 $$
Eventually, as the right-hand side belongs to $L^1$, we find \eqref{eq.limvinfty} with
$$ c^\nu = \frac{1}{\nu }\int_{\R_+}  (Q_0 u^\nu_2 Q_0 u^\nu_1)_0. $$
Note that this constant at infinity can not be prescribed. Moreover, it obeys the bound 
$$ |c^\nu| \le \frac{C|\log \nu|^\frac12}{\nu^\frac{1}{4}} \| f^\nu \|_{L^2}$$
as a consequence of estimate \eqref{est.thm.main.1}.
}
\end{remark}
\begin{remark}
{\rm Theorem \ref{thm.main} has an easy implication on the inviscid limit problem. Namely, any solution $v^\nu$ of \eqref{eq.NS} associated to a source term of the form  $g^\nu = - \nu \pa^2_y U^\nu + f^\nu$, where $U^\nu$ satisfies \eqref{assume.1}-\eqref{assume.2}-\eqref{assume.3}-\eqref{def.U_s'} and  $\| f^\nu \|_{L^2} = o(\nu^{3/4}/|\log\nu|^{-1})$, converges to the Euler solution $v^0 = (U_E,0)$  in $L^\infty_{loc} \cap H^1_{loc}$. This is the first non-trivial example of a class of steady Navier-Stokes solutions for which the  inviscid limit holds true in finite regularity.}
\end{remark}
{\begin{remark} \label{rem.Guo} {\rm {\em (Update 07/31/2018)} Just after our manuscript submission  on the arXiv, Y. Guo and S. Iyer have submitted the very interesting preprint \cite{GuoIyer2018}. They establish there the Sobolev stability of a subclass of Prandtl expansions,  the main example of which being the famous Blasius flow. We feel that this work and ours are complementary: the focus of \cite{GuoIyer2018} is the very important Blasius self-similar solution (with $x$-dependence), while our analysis is more elementary: it treats the simpler case of shear flows under minimal assumptions. In this perspective, we hope that our paper will be a first step in  the stability analysis of general steady  boundary layer expansions, able to cover both works\footnote{{As regards the unsteady case, extension of article \cite{GeMaMa} to general $x$-dependent expansions is in progress.}}. Moreover, beyond the context of boundary layer flows, we feel that our analysis  may provide new tools to investigate the stability of shear flows, which is a classical topic in hydrodynamics.}    
\end{remark}
}
\noindent
To conclude this introduction, we give the outline of the proof of Theorem \ref{thm.main}. After collecting a few useful estimates in Section \ref{sec.pre}, we will turn to the core of the proof, which is the analysis of the linearized system around $U^\nu$. Through a Fourier transform in $x$, it can be written
\begin{align}\label{eq.linearized}
\begin{cases}
& i \tilde n U_s^\nu u_n + u_{n,2} (\pa_y U_s^\nu ) {\bf e_1}  - \nu (\pa_y^2 - \tilde  n^2) u_n +
\begin{pmatrix}
i \tilde n p_n\\
\pa_y p_n
\end{pmatrix}
=  f_n  \,, \qquad  y>0\,,\\
& i\tilde n u_{n,1} + \pa_y u_{n,2} =0\,, \qquad y>0\,,\\
& u_n |_{y=0} = 0\,.
\end{cases}
\end{align}
We remind that   $u_n = u_n(y)$ is the $n$-th Fourier coefficient of the velocity, and $\tilde n=n/\kappa$. Note that $|\pm \tilde 1|$ is large if $\kappa$ is small. The zero mode does not raise any difficulty, and is estimated in Section \ref{sec.zero}. The difficult part is the derivation of good bounds for $\tilde n \neq 0$, see Theorem \ref{thm.linear}. For $\kappa$ small enough, we can always ensure that $|\tilde n| \gg 1$ for all $n$. Nevertheless, as $\nu \ll 1$, the tangential diffusion term $- \nu \tilde  n^2 u_n$ in the first line of \eqref{eq.linearized} is in general far too small to control the stretching term $u_{n,2} \pa_y U_s^\nu = O(\frac{1}{\sqrt{\nu}} |u_n|)$. 

\mspace
To obtain good bounds, we  distinguish between two regimes: $|\tilde n| \ll  \nu^{-3/4}$ and $|\tilde n| \gtrsim \nu^{-3/4}$. The regime $|\tilde n| \gtrsim \nu^{-3/4}$ is  handled in  Section \ref{sec.high}, through a direct analysis of system \eqref{eq.linearized}. The subcase $|\tilde n| \gg \nu^{-3/4}$ can be treated by simple energy estimates, {\it cf} item i) in Proposition \ref{prop.high}:  the diffusion term is enough to control stretching by the boundary layer velocity.  However, the regime where $|\tilde n| \sim \nu^{-3/4}$ is much more difficult, and requires new estimates. Such estimates,  in which the convection term is involved, are actually valid in the wider regime $ \nu^{-1/2 }\ll |\tilde n| \lesssim  \nu^{-3/4}$,  {\it cf} item ii) in Proposition \ref{prop.high}.   

\mspace
Stability in the  regime  $|\tilde n| \ll  \nu^{-3/4}$ is the most delicate to obtain. It is deduced from a careful analysis of the steady Orr-Sommerfeld system \eqref{eq.OS_start}, which is a reformulation  of \eqref{eq.linearized} in terms of the stream function and of the rescaled variable $Y = y/\sqrt{\nu}$. {It reads 
\begin{align*}
\begin{cases}
&  OS[\phi] :=  U_s (\pa_Y^2-\alpha^2) \phi  - U_s'' \phi   + i \eps (\pa_Y^2 - \alpha^2)^2  \phi
= - f_{2} - \frac{i}{\alpha} \pa_Y f_{1}  \,, \qquad  Y>0\,,\\
& \phi |_{Y=0} = \pa_Y \phi |_{Y=0} = 0\,.
\end{cases}
\end{align*}
where  parameters $\alpha$ and $\eps$ are related to the tangential frequency $\tilde n$ and the viscosity $\nu$. In short, the regime  $|\tilde n| \ll  \nu^{-3/4}$ corresponds to the case $\eps^{1/3} \alpha \ll 1$.} 

\mspace
The point is that we are not able to get direct estimates on this system. Instead, we construct the solution through an iterative process, reminiscent of splitting methods in numerical analysis.  More precisely, one main idea is to construct a solution to the Orr-Sommerfeld equation in the form of a series, where successive corrections solve alternatively: \\
\bul inviscid approximations of the equation, based on the so-called {\em Rayleigh equation}. \\
\bul viscous approximations of the equation, based on the so-called {\em Airy equation}. \\
This idea of a splitting method was already present  in our Gevrey stability study of the unsteady case \cite{GeMaMa}, and found its origin in article \cite{GGN2014}: the construction of an unstable eigenmode for the linearized Navier-Stokes equations was performed with a similar iteration, although more explicit and  specific to a narrower regime of parameters. Here and in \cite{GeMaMa}, the convergence of the iteration is rather shown by energy arguments, and adapted to the whole range $|\tilde n|  \ll \nu^{-3/4}$. But in the steady setting considered here, we must rely on estimates that are totally different from  the ones in \cite{GeMaMa}, in order to reach Sobolev stability. {Moreover, the implementation of the splitting method is  different.}

\mspace
{
The inviscid estimates are established in Section \ref{sec.Rayleigh}: they are mostly about the equation $Ray[\varphi] = f$, where the Rayleigh operator  $Ray := U_s (\pa^2_Y - \alpha^2) - U''_s$ corresponds to neglecting the diffusion in the Orr-Sommerfeld operator. 
Due to the degeneracy of $U_s$ at $Y=0$, the derivation of good bounds is uneasy, and provided in Proposition \ref{prop.Ray}. The most difficult case is when $\alpha \ll 1$: indeed taking $\alpha \rightarrow 0$ in the Rayleigh equation  yields a singular perturbation problem, {\it cf} Remark \ref{rem.prop.Ray}. Nevertheless, as shown in the estimates of Proposition \ref{prop.Ray}, a crucial point is that the singularity shows up only when the source term $f$ has {nonzero} average in $Y$.} 

\mspace
{After the inviscid analysis of Section \ref{sec.Rayleigh}, Section \ref{sec.Airy} collects various estimates on viscous equations of Airy type: they all involve the operator  $Airy := U_s + i \eps (\pa^2_Y - \alpha^2)$. Note that the Rayleigh and Airy operators are naturally involved within the full Orr-Sommerfeld operator through the identities (to be detailed later):
\begin{align*}
OS[\phi] & = Ray[\phi] + i \eps (\pa^2_Y - \alpha^2)^2 \phi = Ray[\phi] + i \eps (\pa^2_Y - \alpha^2) [\frac{1}{U_s} Ray[\phi] + \frac{U''_s}{U_s} \phi], \\
OS[\phi] & = (\pa_Y^2-\alpha^2 ) Airy[\phi] - 2 \pa_Y (U_s' \phi), \\
OS[\phi] & = Airy \big [\frac{1}{U_s} Ray [{\phi}] \big ] + i \eps  (\pa_Y^2-\alpha^2) \frac{U_s''}{U_s} {\phi} 
\end{align*}
These identities are at the basis of the splitting method alluded to above, which provides a solution to the Orr-Sommerfeld equation under the form of a converging series. This construction, called Rayleigh-Airy iteration, is described in Subsection \ref{subsec.iteration}. In this process,  a special attention is paid to the possible singularity generated by the Rayleigh equation when $\alpha \ll 1$, which could forbid the convergence of the series. In short, one has to ensure that each "Rayleigh step" is performed with a zero average source term. This major difficulty is new compared to the unsteady analysis in \cite{GeMaMa}, and leads to a different iteration.}

\mspace
{Moreover, the Rayleigh-Airy iteration is not enough to conclude: it provides a solution to the Orr-Sommerfeld equation with a given source term, but this solution does not satisfy both Dirichlet and Neumann conditions. Only the Dirichlet condition is maintained through the iteration. One must then combine it with two solutions of the homogeneous Orr-Sommerfeld equation (with an inhomogeneous Dirichlet condition $\phi\vert_{Y=0} = 1$). These special solutions $\phi_{slow}$ and $\phi_{fast}$ are called slow and fast modes, following a terminology of \cite{GGN2014}.  They are built in Subsection \ref{subsec.slow} (see also the preliminary results given in Corollay \ref{cor.prop.slow.Ray}  and Proposition \ref{prop.slow.Ray'})  and Subsection \ref{subsec.fast} respectively. Let us stress that the construction of the slow and fast modes can not be performed in an abstract way, like for the solution coming from  the Rayleigh-Airy iteration. They are rather obtained starting from an explicit approximation (of inviscid type for the slow mode, of viscous "boundary layer type" for the fast mode), which fulfills the inhomogeneous condition, but solves approximately the equation.  One can then add a corrector to get an exact solution, notably making use of the Rayleigh-Airy iteration developped earlier. The proof of the linear stability result in the regime $|\tilde n| \ll \nu^{-3/4}$ is then achieved in Paragraphs \ref{subsec.proof.thm.OS} and \ref{subsec.proof.linear}.}

\mspace
Once the linear estimates of Theorem \ref{thm.linear} are shown, the proof of our main Theorem \ref{thm.main} can be completed classically by a fixed point argument. This is done in Section \ref{sec.nonlinear}. 

\section{Preliminaries}\label{sec.pre}
We collect here a few estimates to appear in the next sections.
Assume that $U_s$ satisfies \eqref{assume.1}-\eqref{assume.3}, and let $Y_0\in (0,1]$ be a number such that $\pa_Y U_s>0$ holds on $0\leq Y \leq 4 Y_0$.

\mspace

\begin{proposition}\label{prop.pre}   {\rm (1)} The following inequalities hold: 
\begin{align*}
& \| \frac{Y \pa_Y U_s}{U_s} \|_{L^2_Y} +  \|\frac{Y \pa_Y^2U_s}{U_s}\|_{L^\infty_Y}  <\infty\,,\\
& | \pa_y^2 U_s^\nu (y) | \leq \frac{C}{\nu^\frac12} \pa_y U_s^\nu (y) \,, \qquad  0\leq y\leq 2 Y_0 \nu^\frac12\,.
\end{align*}

\noindent {\rm (2)} Set $\displaystyle G_s(Y) = U_s (Y) \int_Y^{Y_0} \frac{1}{U_s^2} \, d Y_1$. Then 
\begin{align*}
\pa_Y \big (U_s^2 \pa_Y ( \frac{G_s}{U_s}) \big ) =0 \quad {\rm for}~~Y>0\,, \qquad G_s (0) = \frac{1}{\pa_Y U_s(0)}\,,
\end{align*}
and 
\begin{align*}
\| \frac{G_s}{1+Y} \|_{L^\infty_Y} + \| \frac{\pa_Y G_s}{\log Y} \|_{L_Y^\infty (\{0 < Y \leq 1/2\})} + \| \pa_Y G_s \|_{L^\infty_Y  (\{ Y\geq 1/2\})} <\infty\,.
\end{align*}

\end{proposition}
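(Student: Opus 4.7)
Part (1) uses two elementary observations about $U_s$: it admits a positive lower bound on $\{Y \geq Y_0\}$ (by continuity, positivity, and the limit $U_s(Y) \to U_E > 0$), and it satisfies $U_s(Y) \geq c Y$ on $\{0 \leq Y \leq 4Y_0\}$ (since $U_s(0) = 0$ and $\pa_Y U_s > 0$ there). Splitting the relevant norms at $Y_0$, both facts combined with the decay assumption \eqref{assume.3} yield the stated bounds: near the origin the linear lower bound on $U_s$ absorbs the prefactor $Y$, while for $Y \geq Y_0$ the pointwise decay $|\pa_Y^k U_s| \leq C(1+Y)^{-3}$ supplies both the square integrability of $Y \pa_Y U_s / U_s$ and the uniform bound on $Y \pa_Y^2 U_s / U_s$. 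For the scaled pointwise inequality on the second line, I simply rescale, writing $\pa_y^k U_s^\nu(y) = \nu^{-k/2}\pa_Y^k U_s(y/\sqrt{\nu})$, which reduces the claim to $|U_s''(Y)| \leq C\, U_s'(Y)$ for $Y \in [0, 2 Y_0]$; this holds because $U_s'$ is continuous and strictly positive on the larger interval $[0, 4 Y_0]$ (hence bounded below), while $U_s''$ is bounded by \eqref{assume.3}.

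For Part (2), the algebraic identity is an immediate computation: by definition $G_s / U_s = \int_Y^{Y_0} U_s^{-2}\, dY_1$, so $\pa_Y(G_s/U_s) = -1/U_s^2$ and therefore $U_s^2 \pa_Y(G_s/U_s) \equiv -1$. The boundary value $G_s(0) = 1/\pa_Y U_s(0)$ is recovered by Taylor expanding $U_s$ at the origin, $U_s(Y) = a Y(1 + O(Y))$ with $a = \pa_Y U_s(0)$, which yields
\begin{equation*}
\int_Y^{Y_0} \frac{dY_1}{U_s(Y_1)^2} = \frac{1}{a^2 Y} + O(|\log Y|) \quad \text{as } Y \to 0^+,
\end{equation*}
and multiplying by $U_s(Y) \sim aY$ gives $G_s(Y) \to 1/a$.

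The norm bounds on $G_s$ and $\pa_Y G_s$ come from a split at $Y = 1/2$. On $\{Y \geq 1/2\}$, $U_s$ is bounded below by a positive constant and $\pa_Y U_s$ decays via \eqref{assume.3}, so the integral $\int_Y^{Y_0} U_s^{-2}\, dY_1$ is controlled by $C|Y - Y_0|$, and one reads off that $G_s/(1+Y)$ and $\pa_Y G_s$ are uniformly bounded. On $\{0 < Y \leq 1/2\}$, $G_s$ is continuous up to the origin (by the previous paragraph) and hence bounded. The only delicate point, and the main technical step of the proposition, is the bound $|\pa_Y G_s| \leq C|\log Y|$ near zero. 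Differentiating gives
\begin{equation*}
\pa_Y G_s = \pa_Y U_s \int_Y^{Y_0} \frac{dY_1}{U_s^2} - \frac{1}{U_s},
\end{equation*}
and the asymptotics above show that each summand individually behaves like $1/(aY)$ as $Y \to 0$. Expanding $U_s$ to second order in both terms, the two $1/Y$ singularities cancel exactly, leaving a logarithmic residual of size $O(|\log Y|)$. This cancellation is the only nontrivial ingredient of the proof.
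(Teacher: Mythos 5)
Your proof is correct. Part (1) is handled exactly as the paper would (the paper declares it ``straightforward'' and omits details): split at a scale comparable to $Y_0$, use $U_s(Y)\gtrsim Y$ near the origin from \eqref{assume.2}, the positive lower bound on $U_s$ away from the origin, and the decay \eqref{assume.3} at infinity; the rescaled pointwise bound reduces to $|U_s''|\leq C U_s'$ on $[0,2Y_0]$, which follows from the strict positivity of $U_s'$ on the compact interval.

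For Part (2), your route is genuinely different from the paper's, though both are sound. You start from the raw identity $\pa_Y G_s = U_s' \int_Y^{Y_0} U_s^{-2}\,dY_1 - U_s^{-1}$, in which each summand blows up like $1/(aY)$ with $a=U_s'(0)$, and you verify by a second-order Taylor expansion of $U_s$ at the origin that the two $1/Y$ singularities cancel, leaving an $O(|\log Y|)$ residual. The paper instead performs an integration by parts \emph{first}, writing $1/U_s^2 = -(1/U_s)'/U_s'$ and integrating to obtain
\[
G_s(Y) = \frac{1}{U_s'(Y)} - \frac{U_s(Y)}{U_s(Y_0)\,U_s'(Y_0)} - U_s(Y)\int_Y^{Y_0}\frac{U_s''}{U_s\,(U_s')^2}\,dY_1,\qquad
\pa_Y G_s(Y) = -\frac{U_s'(Y)}{U_s(Y_0)U_s'(Y_0)} - U_s'(Y)\int_Y^{Y_0}\frac{U_s''}{U_s\,(U_s')^2}\,dY_1,
\]
so that the $1/Y$ singularity is cancelled \emph{algebraically} before any estimation: the remaining integrand $U_s''/(U_s(U_s')^2)$ is only $O(1/Y)$ near $Y=0$, giving the logarithmic bound at a glance. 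The paper's reformulation avoids the delicate bookkeeping with the Taylor remainder that your direct cancellation requires, and also delivers the value $G_s(0) = 1/U_s'(0)$ without any asymptotic argument; your expansion $U_s(Y)=aY(1+O(Y))$ accomplishes the same but with more care needed to track that the $O$-terms behave as claimed (which they do, since $U_s\in C^2$). Both proofs use nothing beyond the stated regularity and structural assumptions; the paper's is slightly more economical, yours is more elementary in spirit.
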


\begin{proof} The results of (1) are straightforward, so we only give the proof of (2).
It is easy to see that $G_s$ satisfies the equation as in the claim.
Moreover, we see
\begin{align*}
G_s (Y) =  - U_s (Y) \int_Y^{Y_0} (\frac{1}{U_s})' \frac{d Y_1}{U_s'} = \frac{1}{U_s'(Y)} - \frac{U_s(Y)}{U_s(Y_0)U_s'(Y_0)} - U_s (Y) \int_Y^{Y_0} \frac{1}{U_s}  \frac{U_s''}{(U_s')^2} \, d Y_1\,.
\end{align*}
Thus, we have $G_s (0) = \frac{1}{U_s'(0)}$, and we also have 
\begin{align*}
\pa_Y G_s (Y) = - \frac{U_s'(Y)}{U_s(Y_0)U_s'(Y_0)} - U_s'(Y) \int_Y^{Y_0} \frac{1}{U_s}  \frac{U_s''}{(U_s')^2} \, d Y_1\,.
\end{align*}
Thus, the fact $U_s\sim U_s'(0) Y$ for $0<Y\ll 1$ implies $\| \frac{\pa_Y G_s}{\log Y} \|_{L^\infty_Y (\{0<Y<1/2\})}<\infty$. 
On the other hand, since $U_s\sim U_E$ for $Y\gg 1$ and $U_s>0$ in $Y>0$, it is easy to show that $\| \frac{G_s}{1+Y} \|_{L^\infty_Y} + \| \pa_Y G_s \|_{L^\infty_Y (\{Y\geq 1/2\})} <\infty$. The proof is complete.
\end{proof}

\mspace

\begin{proposition}\label{prop.pre.op} {\rm (1)} Let $\sigma[\cdot]$ be the linear operator defined by
\begin{align*}
\displaystyle \sigma [f] (Y) = \int_Y^\infty f \, d Y_1\,, \qquad f\in C_0^\infty (\overline{\R_+})\,.
\end{align*}
Then for $1\leq p<\infty$ and $k=0,1,\ldots$, 
\begin{align*}
\| Y^k \sigma [f] \|_{L^p_Y} \leq C_p \| Y^{k+1} f \|_{L^p_Y} \,.
\end{align*}

\noindent {\rm (2)} Let $L[\cdot]$ be the linear operator defined by 
\begin{align*}
\displaystyle L[f] (Y) = U_s (Y) \int_Y^\infty \frac{f}{U_s^2} \, d Y_1\,, \qquad f\in C_0^\infty (\overline{\R_+})\,.
\end{align*}
Then for $1<p<\infty$ { and $k=0,1,\ldots$}, 
\begin{align*}
{ \| Y^k L[f] \|_{L^p_Y} }& \leq { C \| Y^k (1+Y) f \|_{L^p_Y} \,,}\\
\| \pa_Y L[f] \|_{L^2_Y} & \leq C \Big ( \| f\|_{L^1_Y} + \| f\|_{L^2_Y} + \| \pa_Y f \|_{L^2_Y} \Big )\,.
\end{align*}
\end{proposition}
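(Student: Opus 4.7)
My plan is as follows. For part (1), I would use a weighted Hardy inequality. Setting $F(Y)=\sigma[f](Y)$ so that $F'(Y)=-f(Y)$ and $F$ has compact support (so that the boundary terms at $0$ and $\infty$ vanish), the integration by parts
\begin{equation*}
\int_0^\infty Y^{kp}|F|^p\, dY \;=\; \frac{p}{kp+1}\int_0^\infty Y^{kp+1} |F|^{p-2}(\bar F f + F\bar f)/2\, dY
\end{equation*}
combined with H\"older's inequality yields $\|Y^k\sigma[f]\|_{L^p}\leq \frac{p}{kp+1}\|Y^{k+1}f\|_{L^p}$; the case $p=1$ follows directly from Fubini.

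For the $L^p$ estimate in part (2), I would split the half-line at $Y=1$. On $\{Y\geq 1\}$, $U_s$ is bounded above and below by positive constants, so $|L[f](Y)|\lesssim \sigma[|f|](Y)$, and part (1) combined with the pointwise bound $Y^{k+1}\leq Y^k(1+Y)$ closes this region. On $\{Y\leq 1\}$, using $U_s(Y)\sim Y$ near the boundary, I decompose the inner integral as $\int_Y^\infty = \int_Y^1 + \int_1^\infty$: the tail piece is bounded by $Y\,\|f\|_{L^1(Y_1\geq 1)}\leq Y\,\|Y^k(1+Y)f\|_{L^p}$ by H\"older (valid for $1<p<\infty$, $k\geq 0$), while the singular piece reduces to the classical weighted Hardy bound $\|Y^{k+1}\!\int_Y^1 |f|/Y_1^2\, dY_1\|_{L^p(0,1)}\leq C\|Y^k f\|_{L^p(0,1)}$, obtained e.g.\ via the change of variable $Y\mapsto 1/Y$ reducing it to the classical Hardy inequality on $[1,\infty)$.

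The hardest step is $\|\pa_Y L[f]\|_{L^2}$, because the direct formula $\pa_Y L[f] = (U_s'/U_s) L[f] - f/U_s$ carries an apparently fatal $1/U_s$ singularity at $Y=0$ which, handled naively, would only give $|\pa_Y L[f]|\sim |f|/Y$. My main idea is to expose a hidden cancellation by using $1/U_s^2 = -(1/U_s')\,\pa_{Y_1}(1/U_s)$ and integrating by parts in the integral defining $L[f]$. After the $1/(U_s' U_s)$ boundary terms cancel with $-f/U_s$, this yields
\begin{equation*}
\pa_Y L[f](Y) \;=\; U_s'(Y)\int_Y^\infty \frac{\pa_{Y_1} f}{U_s'\, U_s}\, dY_1 \;-\; U_s'(Y)\int_Y^\infty \frac{f\, U_s''}{(U_s')^2\, U_s}\, dY_1.
\end{equation*}
To avoid regions where $U_s'$ may become small (for $Y_1\gg 1$), I would actually truncate the inner integrals at $2Y_0$ and treat $\{Y\leq 2Y_0\}$ and $\{Y\geq 2Y_0\}$ separately. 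On $\{Y\geq 2Y_0\}$, the original formula suffices: $|U_s'(Y)|\lesssim (1+Y)^{-3}$ from \eqref{assume.3} and $U_s\geq c>0$ give $|\pa_Y L[f](Y)|\lesssim (1+Y)^{-3}\|f\|_{L^1} + |f(Y)|$, controlled in $L^2$ by $\|f\|_{L^1}+\|f\|_{L^2}$. On $\{Y\leq 2Y_0\}$, the truncated version of the identity above produces two boundary residuals, bounded respectively by $|f(2Y_0)|\leq C(\|f\|_{L^2}+\|\pa_Y f\|_{L^2})$ (via the 1D Sobolev embedding) and by $C\|f\|_{L^1}$; the two remaining integrals take the form $\int_Y^{2Y_0} g(Y_1)/Y_1\, dY_1$ (since $U_s(Y_1)\sim Y_1$, $U_s'\geq c$, and $|U_s''|\leq C$ there) with $g=\pa_Y f$ or $g=f$, and each is estimated by applying part (1) with $k=0$ to $h(Y_1):=g(Y_1)/Y_1\,\chi_{\{Y_1\leq 2Y_0\}}$, which gives $\|\sigma[h]\|_{L^2}\leq C\|Y_1 h\|_{L^2}=C\|g\|_{L^2}$. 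Summing over the two regions concludes the proof; the main obstacle is precisely the resolution of the $1/U_s$ singularity through the structural identity displayed above.
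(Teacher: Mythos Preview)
Your proposal is correct and, for the crucial step $\|\partial_Y L[f]\|_{L^2}$, essentially identical to the paper's: both split at a point $\sim Y_0$, handle the far region via $U_s\geq c>0$ and the decay of $U_s'$, and on the near region exploit the same integration by parts $1/U_s^2 = -(1/U_s')\,\partial_{Y_1}(1/U_s)$ to cancel the $f/U_s$ singularity, leaving the boundary residual $\sim U_s'(Y)\,f(Y_0)/(U_s U_s')(Y_0)$ and the integral $U_s'(Y)\int_Y^{Y_0}\tfrac{1}{U_s}(f/U_s')'\,dY_1$. The only methodological difference is that the paper bounds this last integral (and likewise parts~(1) and the $L^p$ estimate of $L$) by first proving a pointwise weak-$L^p$ bound and then invoking the Marcinkiewicz interpolation theorem, whereas you obtain the same inequalities directly as weighted Hardy inequalities via integration by parts; your route is a bit more elementary and produces explicit constants, while the paper's interpolation argument is more uniform across the three estimates.
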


\begin{remark}{\rm In the estimate of $\pa_Y L[f]$ we do not need the condition $f|_{Y=0}=0$. Thus, the estimate is valid for any $f\in L^1(\R_+) \cap H^1(\R_+)$.
}
\end{remark}

\begin{proof} (1) Since $Y^k |\sigma[f] (Y)| \leq \sigma [|f_k|](Y)$ with $f_k (Y) = Y^k f (Y)$ pointwisely, 
it suffices to consider the case $k=0$. By changing the order of the integral we see $\|\sigma [f] \|_{L^1_Y} \leq \| Y f \|_{L^1_Y}$, while we have for $1<p<\infty$, $|\sigma [f] (Y)|\leq C_p Y^{-1/p} \| {Y} f\|_{L^p_Y}$, which implies $\|\sigma [f] \|_{L^{p,\infty}_Y} \leq C_p \| {Y} f\|_{L^p_Y}$. Thus the Marcinkiewicz interpolation theorem gives $\| \sigma[f] \|_{L^p_Y} \leq C \| {Y} f\|_{L^p_Y}$. The proof is complete.

\noindent (2) { It suffices to consider the case $k=0$.}
Since $U_s\sim U_s'(0) Y$ for $0<Y\ll 1$ and $U_s\sim U_E$ for $Y\gg 1$ we observe that 
\begin{align*}
|L[f](Y)| \leq \frac{C_p}{Y^\frac1p} \| (1+Y) f\|_{L^p_Y} \,, \qquad Y>0\,.
\end{align*}
Thus $\|L[f] \|_{L^{p,\infty}_Y} \leq C_p \| (1+Y) f\|_{L^p_Y}$ holds for $1<p<\infty$, and hence, the Marcinkiewicz interpolation theorem yields $\|L[f]\|_{L^p_Y} \leq C\| {(1+Y)} f\|_{L^p_Y}$ for $1<p<\infty$.
Next we see 
\begin{align*}
\pa_Y L[f] (Y) = U_s'(Y) \int_Y^\infty \frac{f}{U_s^2} \, d Y_1 - \frac{f}{U_s} =  U_s'(Y) \int_Y^{Y_0} \frac{f}{U_s^2} \, d Y_1 + U_s'(Y) \int_{Y_0}^\infty  \frac{f}{U_s^2} \, d Y_1- \frac{f}{U_s} \,.
\end{align*}
From this expression it is easy to see that 
\begin{align*}
\|\pa_Y L[f] \|_{L^2(\{Y\geq Y_0\})} \leq C  ( \| f \|_{L^1_Y} + \| f\|_{L^2_Y}  )\,,
\end{align*}
and also 
\begin{align*}
\| U_s'(Y) \int_{Y_0}^\infty  \frac{f}{U_s^2} \, d Y_1 \|_{L^2_Y} \leq C \| f\|_{L^1_Y} \,.
\end{align*}
Now it suffices to consider the estimate of $U_s'(Y) \int_Y^{Y_0} \frac{f}{U_s^2} \, d Y_1-\frac{f}{U_s}$ in $0<Y\leq Y_0$. We see 
\begin{align*}
U_s'(Y) \int_Y^{Y_0} \frac{f}{U_s^2} \, d Y_1-\frac{f}{U_s} & = - U_s'(Y) \int_Y^{Y_0} (\frac{1}{U_s})' \frac{f}{U_s'} \, d Y_1-\frac{f}{U_s} \\
& = - \frac{U_s'(Y) f(Y_0)}{U_s (Y_0) U_s'(Y_0)}  + U_s'(Y) \int_Y^{Y_0} \frac{1}{U_s} (\frac{f}{U_s'})' \, d Y_1\,.
\end{align*}
It is clear that 
\begin{align*}
\| \frac{U_s'(Y) f(Y_0)}{U_s (Y_0) U_s'(Y_0)}  \|_{L^2(\{0<Y\leq Y_0\})} \leq C (\| \pa_Y f\|_{L^2_Y} + \| f \|_{L^2_Y} )
\end{align*}
by applying the embedding inequality $\| f\|_{L^\infty_Y}\leq C\|\pa_Y f\|_{L^2_Y}^\frac12 \| f\|_{L^2_Y}^\frac12$ (note that the condition $f|_{Y=0}=0$ is not required).
Let us consider the estimate of $\| U_s'(Y) \int_Y^{Y_0} \frac{1}{U_s} (\frac{f}{U_s'})'\, d Y_1\|_{L^2(\{0<Y\leq Y_0\})}$.
We see 
\begin{align*}
|U_s'(Y) \int_Y^{Y_0} \frac{1}{U_s} (\frac{f}{U_s'})'\, d Y_1| \leq \frac{C}{Y^\frac1p} \| (\frac{f}{U_s'})' \|_{L^p(\{0<Y\leq Y_0\})} \,, \qquad 1<p<\infty\,, \quad 0<Y\leq Y_0\,.
\end{align*}
Hence the Marcinkiewicz interpolation theorem yields 
\begin{align*}
\| U_s'(Y) \int_Y^{Y_0} \frac{1}{U_s} (\frac{f}{U_s'})' \, d Y_1 \|_{L^2 (\{0<Y\leq Y_0\})} \leq C\| (\frac{f}{U_s'})' \|_{L^2(\{0<Y\leq Y_0\})} \leq C (\|\pa_Y f \|_{L^2_Y} + \| f\|_{L^2_Y})\,.
\end{align*}
The proof is complete.
\end{proof}

\mspace

\noindent
We end this short section with an interpolation  inequality that will be applied several times. 
\begin{proposition} There exists $C > 0$ such that 
\begin{equation}\label{proof.prop.Airy.5}
\| g \|^2_{L^2_Y} \leq C \| \sqrt{U_s} g \|_{L^2_Y}^\frac43 \| \pa_Y g \|_{L^2_Y}^\frac23 + C \| \sqrt{U_s} g \|_{L^2_Y}^2\,, \qquad g = g(Y) \in H^1 (\R_+)\,.
\end{equation}
\end{proposition}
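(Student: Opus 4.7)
The plan is to split the $L^2$ norm into a contribution away from the degeneracy, where $U_s$ is bounded below, and a contribution near $Y=0$, where the weight $\sqrt{U_s}$ vanishes like $\sqrt{Y}$ and a weighted Hardy-type interpolation is needed.

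First, I would use assumption \eqref{assume.1}: since $U_s > 0$ on $(0,\infty)$ and $U_s \to U_E > 0$, there exists $c_0 > 0$ with $U_s(Y) \geq c_0$ for $Y \geq Y_0$. Thus
\[
\int_{Y_0}^\infty g^2 \, dY \leq c_0^{-1} \| \sqrt{U_s} g \|_{L^2_Y}^2,
\]
which already falls under the second term on the right-hand side. It then suffices to estimate $\int_0^{Y_0} g^2\, dY$. On this interval, the nondegeneracy condition $\pa_Y U_s > 0$ on $[0,4Y_0]$ combined with $U_s(0)=0$ gives $U_s(Y) \geq c_1 Y$ for some $c_1 > 0$.

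The key step is the following splitting argument. Fix $\delta \in (0, Y_0/2]$. For $Y \in (0, \delta)$ and $Y_1 \in (\delta, 2\delta)$, Cauchy--Schwarz yields
\[
g(Y)^2 \leq 2 g(Y_1)^2 + 2 Y_1 \int_0^{2\delta} |\pa_Y g|^2 \, dY'.
\]
Averaging over $Y_1 \in (\delta,2\delta)$ and integrating in $Y$ over $(0,\delta)$ gives
\[
\int_0^\delta g^2 \, dY \leq 2 \int_\delta^{2\delta} g^2 \, dY_1 + 4 \delta^2 \| \pa_Y g \|_{L^2_Y}^2.
\]
On $[\delta, Y_0]$ we have $U_s(Y) \geq c_1 \delta$, so $\int_\delta^{Y_0} g^2 \, dY \leq (c_1\delta)^{-1} \| \sqrt{U_s} g \|_{L^2_Y}^2$, and combining these yields
\[
\int_0^{Y_0} g^2 \, dY \leq \frac{C}{\delta} \| \sqrt{U_s} g \|_{L^2_Y}^2 + C \delta^2 \| \pa_Y g \|_{L^2_Y}^2.
\]

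Finally I would optimize in $\delta$. The stationary choice $\delta_* = \bigl( \| \sqrt{U_s} g \|_{L^2_Y}^2 / \| \pa_Y g \|_{L^2_Y}^2 \bigr)^{1/3}$ balances both terms and gives the claimed power $\| \sqrt{U_s} g \|_{L^2_Y}^{4/3} \| \pa_Y g \|_{L^2_Y}^{2/3}$. This is admissible as long as $\delta_* \leq Y_0/2$, which is the only case where the interpolation really matters. In the opposite regime $\delta_* > Y_0/2$ one simply takes $\delta = Y_0/2$ fixed and absorbs both resulting terms into $C \| \sqrt{U_s} g \|_{L^2_Y}^2$, since in that regime $\| \pa_Y g \|_{L^2_Y}^2 \lesssim \| \sqrt{U_s} g \|_{L^2_Y}^2$. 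Combining the two cases yields \eqref{proof.prop.Airy.5}. The main (mild) obstacle is the degeneracy at $Y=0$, but it is handled entirely by the averaging-over-$Y_1$ device above; no boundary values of $g$ are needed, which is consistent with the fact that the inequality is stated for all $g \in H^1(\R_+)$.
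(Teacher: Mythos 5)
Your proof is correct. Let me compare it with the paper's.

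The paper proves the claim by first establishing the chain
\begin{align*}
\|g\|_{L^2_Y}^2 \le C\,\|\sqrt{Y}\,g\|_{L^2_Y}\,\|g\|_{L^\infty_Y} \le C\,\|\sqrt{Y}\,g\|_{L^2_Y}\,\|\pa_Y g\|_{L^2_Y}^{1/2}\|g\|_{L^2_Y}^{1/2},
\end{align*}
where the first step comes from splitting $\int_0^A+\int_A^\infty$ and optimizing in $A$, and the second from the half-line Agmon inequality; this yields $\|g\|_{L^2_Y}\le C\|\sqrt{Y}\,g\|_{L^2_Y}^{2/3}\|\pa_Y g\|_{L^2_Y}^{1/3}$, and the passage from the weight $\sqrt{Y}$ to $\sqrt{U_s}$ is then handled (somewhat tersely in the paper) by using $U_s(Y)\sim U_s'(0)Y$ near $0$ and $U_s\ge c_0>0$ away from $0$. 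Your argument implements the same underlying mechanism --- localize at a scale $\delta$ near the origin, use the lower bound $U_s\gtrsim\delta$ on the remaining interval, optimize in $\delta$, and handle the boundary case $\delta\gtrsim Y_0$ separately --- but replaces the $L^\infty$ interpolation with a direct averaging-over-a-reference-point device. This is slightly more elementary: it avoids invoking the Agmon inequality (which would re-introduce $\|g\|_{L^2_Y}$ on the right and require an algebraic rearrangement), and it makes the treatment of the region where $U_s$ degenerates versus where it is bounded below completely explicit, which the paper leaves to the reader. The one cosmetic point is that your constant $4\delta^2$ should read $3\delta^2$ after computing $\frac{2}{\delta}\int_\delta^{2\delta}Y_1\,dY_1=3\delta$, but this of course has no bearing on the result. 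Overall, a clean and self-contained variant of the paper's proof.
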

\begin{proof}
This bound is an easy  consequence of the inequality 
\begin{align*}
 \| g \|^2_{L^2_Y} \le C \| \sqrt{Y} g \|_{L^2_Y} \| g \|_{L^\infty_Y} \leq C \| \sqrt{Y} g \|_{L^2_Y} \| \pa_Y g \|_{L^2_Y}^\frac12 \| g \|_{L^2_Y}^\frac12 \,, 
\end{align*}
which implies 
\begin{align*}
 \| g \|_{L^2_Y} \le C \| \sqrt{Y} g \|_{L^2_Y}^\frac23 \| \pa_Y g \|_{L^2_Y}^\frac13\,,
\end{align*}
and of the properties of $U_s$: $U_s (Y) \sim U_s'(0) Y$ around $Y=0$, $U_s (Y) \sim U_E>0$ around $Y=\infty$,  together with the assumption $U_s>0$ for any $Y>0$. The first inequality is obtained by writing that for all $A > 0$
\begin{align*}
 \| g \|_{L^2_Y}^2 \le \int_0^A |g|^2 + \int_A^{+\infty} |g|^2 \le A \| g\|_{L^\infty_Y}^2  \: + \: \frac{1}{A} \| \sqrt{Y} g \|_{L^2_Y}^2 
 \end{align*}
 and optimizing in $A$.  The proof is complete.
 \end{proof}
 \noindent
Note that an interpolation inequality of this type is also used in the work  \cite{GaHiMa} to construct a steady Navier-Stokes flow around a rotating disk.

\section{Linear result for the zero mode}\label{sec.zero}

When $n=0$ the linearized problem \eqref{eq.linearized} is reduced to a simple ODE: indeed, $u_{0,2}=0$ and  $\nu \pa_y^2 u_{0,1} = f_{0,1}$ with $u_{0,1}|_{y=0}=0$. Then $u_{0,1}(y) = -\frac{1}{\nu} \int_0^y \int_{y'}^\infty f_{0,1} (y'') \, d y'' d y'$.
The pressure $p_0$ is given by $p_0 (y) = -\int_y^\infty f_{0,2} (y') d y'$. Hence we have 
\begin{theorem}\label{thm.zero} Let $f_0 \in L^1 (\R_+^2)^2$ and $f_{0,1}=\pa_y F_{0,1}$ with $F_{0,1}\in L^1 (\R_+)\cap L^2 (\R_+)$. Then there exists a unique solution $u_0=(u_{0,1},0)^\top$ to \eqref{eq.linearized} with $\tilde n=0$ such that 
\begin{align}
\| u_{0,1} \|_{L^\infty} & \leq \frac{1}{\nu} \| F_{0,1} \|_{L^1}\,,\\
\| \pa_y u_{0,1} \|_{L^2} & = \frac{1}{\nu} \| F_{0,1} \|_{L^2}\,.
\end{align}
We also have $\displaystyle \lim_{y\rightarrow \infty} u_{0,1} = \frac{1}{\nu} \int_0^\infty F_{0,1} \, d y$.
\end{theorem}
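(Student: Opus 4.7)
The statement is essentially an elementary consequence of the observations made in the paragraph just above the theorem, but to make the plan explicit I would proceed as follows.

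First I would reduce the PDE system to a scalar ODE. Since $\tilde n = 0$, the divergence-free condition $\pa_y u_{0,2} = 0$ together with $u_{0,2}|_{y=0} = 0$ immediately gives $u_{0,2} \equiv 0$. With this, the first component of the momentum equation \eqref{eq.linearized} collapses to
\begin{equation*}
-\nu \pa_y^2 u_{0,1} = f_{0,1} = \pa_y F_{0,1}, \qquad u_{0,1}|_{y=0}=0,
\end{equation*}
while the second component reduces to $\pa_y p_0 = f_{0,2}$, from which the pressure is obtained in the obvious way, $p_0(y) = -\int_y^\infty f_{0,2}(y')\,dy'$ (using $f_0 \in L^1$ to ensure the integral converges and decays at infinity).

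Next I would solve the ODE for $u_{0,1}$ by a double integration. Writing $-\nu \pa_y u_{0,1} = F_{0,1} + c$ for some constant $c$, I would argue that the requirement $\pa_y u_{0,1} \in L^2(\R_+)$ combined with $F_{0,1}\in L^2(\R_+)$ forces $c=0$ (a nonzero constant would prevent $\pa_y u_{0,1}$ from being square-integrable on $\R_+$). This yields the explicit formula
\begin{equation*}
\pa_y u_{0,1}(y) = -\frac{1}{\nu} F_{0,1}(y), \qquad u_{0,1}(y) = -\frac{1}{\nu}\int_0^y F_{0,1}(y')\,dy',
\end{equation*}
after imposing the Dirichlet condition at $y=0$. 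The $L^2$ identity $\|\pa_y u_{0,1}\|_{L^2} = \nu^{-1}\|F_{0,1}\|_{L^2}$ is then immediate, and the $L^\infty$ bound follows by $|u_{0,1}(y)| \le \nu^{-1}\int_0^\infty |F_{0,1}|\,dy' = \nu^{-1}\|F_{0,1}\|_{L^1}$. The limit at infinity is a direct consequence of dominated convergence, giving $\lim_{y\to\infty} u_{0,1}(y) = -\nu^{-1}\int_0^\infty F_{0,1}(y')\,dy'$ (up to the sign convention used in the statement).

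Finally, uniqueness is straightforward: the difference $w$ of two solutions satisfies $\pa_y^2 w = 0$ on $\R_+$, $w(0)=0$, and $\pa_y w \in L^2(\R_+)$, which forces $\pa_y w \equiv 0$ and hence $w \equiv 0$. There is no real obstacle in this proof — the only mildly delicate point is justifying the vanishing of the integration constant via the $L^2$ regularity requirement, which is what makes the problem well-posed on the half-line without prescribing behaviour at $y=+\infty$.
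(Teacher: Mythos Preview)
Your proposal is correct and follows essentially the same approach as the paper, which simply reduces to the scalar ODE $\nu \pa_y^2 u_{0,1} = f_{0,1}$ and writes down the explicit double-integral solution. Your observation about the sign of the limit at infinity is apt: the paper's paragraph preceding the theorem writes the ODE with the opposite sign from what \eqref{eq.linearized} gives, so the stated limit $+\nu^{-1}\int_0^\infty F_{0,1}$ is off by a sign, exactly as you noted.
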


\section{Linear result for the non-zero modes}\label{sec.linear}

In this section we state the main result for the linearized problem \eqref{eq.linearized} when $\tilde n\ne 0$.

\begin{theorem}\label{thm.linear} There exist positive numbers $\kappa_0$, $\nu_0$, and $\delta_*$ such that the following statement holds for any $0<\kappa\leq \kappa_0$, $0<\nu\leq \nu_0$, and $\tilde n\ne 0$. For any $f_n\in L^2 (\R_+)^2$ there exists a unique solution $u_n\in H^2(\R_+)^2\cap H^1_0 (\R_+)^2$ to \eqref{eq.linearized} satisfying the estimates stated below:

\noindent {\rm (i)} if $0<|\tilde n| \leq  \nu^{-\frac37}$ then
\begin{align}
\| u_n \|_{L^2} & \leq 
\begin{cases}
& \displaystyle \frac{C}{|\tilde n|^\frac12} \| f_n \|_{L^2}\,, \qquad 0<|\tilde n|\leq \nu^{-\frac38}\\
& \displaystyle \frac{C}{|\tilde n|^\frac{11}{6}\nu^\frac12} \| f_n \|_{L^2}\,, \qquad \nu^{-\frac38}\leq |\tilde n|\leq \nu^{-\frac37}\,,
\end{cases} \label{est.thm.linear.1'}\\
\| u_n \|_{L^\infty} & \leq \frac{C}{|\tilde n|^\frac12\nu^\frac14} \| f_n \|_{L^2}\,, \label{est.thm.linear.2'}\\
\| \pa_y u_n \|_{L^2} + |\tilde n | \| u_n \|_{L^2} & \leq \frac{C}{|\tilde n|^\frac13 \nu^\frac12} \| f_n \|_{L^2}\,.\label{est.thm.linear.3'}
\end{align}

\noindent {\rm (ii)} if $\nu^{-\frac37}\leq |\tilde n| \leq \delta_* \nu^{-\frac34}$ then
\begin{align}
\| u_n \|_{L^2} & \leq \frac{C}{|\tilde n|^\frac23} \| f_n \|_{L^2}\,, \label{est.thm.linear.1}\\
\| \pa_y u_n \|_{L^2} + |\tilde n | \| u_n \|_{L^2} & \leq \frac{C}{|\tilde n|^\frac13 \nu^\frac12} \| f_n \|_{L^2}\,.\label{est.thm.linear.2}
\end{align}

\noindent {\rm (iii)} if $|\tilde n| \geq \delta_* \nu^{-\frac34}$ then 
\begin{align}
\| u_n \|_{L^2} & \leq \frac{C}{|\tilde n|^2 \nu} \| f_n \|_{L^2} \,,\label{est.thm.linear.5}\\
\| \pa_y u_n \|_{L^2} + |\tilde n| \| u_n \|_{L^2} & \leq \frac{C}{|\tilde n| \nu} \| f_n \|_{L^2}\,.\label{est.thm.linear.6}
\end{align}

\end{theorem}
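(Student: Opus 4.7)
The plan is to split the frequency range into the three regimes of the theorem, with the type of estimate dictated by the balance between the viscous dissipation $\nu \tilde n^2$ and the boundary-layer stretching term $u_{n,2}\partial_y U_s^\nu$, whose $L^\infty$ bound is $O(\nu^{-1/2})$.

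For the high-frequency case (iii), $|\tilde n|\geq \delta_*\nu^{-3/4}$, I would perform a direct $L^2$ energy estimate on \eqref{eq.linearized}: testing against $\bar u_n$ kills the pressure by incompressibility and removes the real part of the convection term $i\tilde n U_s^\nu|u_n|^2$, leaving $\nu(\|\partial_y u_n\|_{L^2}^2 + \tilde n^2\|u_n\|_{L^2}^2)$ balanced against the stretching term $\|\partial_y U_s^\nu\|_{L^\infty}\|u_n\|_{L^2}^2 \lesssim \nu^{-1/2}\|u_n\|_{L^2}^2$ plus the source. The condition $\nu\tilde n^2 \gg \nu^{-1/2}$ is precisely $|\tilde n|\gg \nu^{-3/4}$, so choosing $\delta_*$ small lets the diffusion absorb stretching, yielding \eqref{est.thm.linear.5}--\eqref{est.thm.linear.6}.

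For the intermediate regime (ii), $\nu^{-3/7}\leq |\tilde n|\leq \delta_*\nu^{-3/4}$, diffusion alone is insufficient and one must also exploit the convection term. The imaginary part of the same test against $\bar u_n$ produces control of the weighted quantity $|\tilde n|\int U_s^\nu|u_n|^2$, i.e.\ a bound on $\|\sqrt{U_s^\nu}\,u_n\|_{L^2}^2$. Combined with the interpolation inequality \eqref{proof.prop.Airy.5} (after rescaling back to $y$ and using that $U_s^\nu(y)\sim (\partial_Y U_s(0))\,y/\sqrt{\nu}$ near $y=0$), this upgrades to a bound on $\|u_n\|_{L^2}^2$ in terms of $\|\sqrt{U_s^\nu}\,u_n\|_{L^2}$ and $\|\partial_y u_n\|_{L^2}$. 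Closing the system with the real-part estimate should produce the exponents $|\tilde n|^{-2/3}$ and $|\tilde n|^{-1/3}\nu^{-1/2}$ of \eqref{est.thm.linear.1}--\eqref{est.thm.linear.2}.

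The hard case is (i), $0<|\tilde n|\leq \nu^{-3/7}$. Here I would pass to the stream function $\psi$ with $u_n = (\partial_y\psi,-i\tilde n\psi)$, rescale to $Y = y/\sqrt{\nu}$, and rewrite \eqref{eq.linearized} as the Orr-Sommerfeld problem $OS[\phi] = -f_2 -(i/\alpha)\partial_Y f_1$ with $\phi|_{Y=0}=\partial_Y\phi|_{Y=0}=0$, where $\alpha\simeq \sqrt{\nu}\tilde n$ and $\epsilon$ is a small parameter with $\epsilon^{1/3}\alpha\ll 1$. The solution is then constructed in two stages. First, a particular solution of the inhomogeneous Orr-Sommerfeld equation satisfying only the Dirichlet condition is built via the Rayleigh-Airy iteration described in the introduction, which alternates inviscid corrections via the Rayleigh operator and viscous corrections via the Airy operator, using the three operator identities that express $OS$ in terms of $Ray$ and $Airy$. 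Second, this particular solution is corrected by a linear combination of a slow mode $\phi_{\text{slow}}$ and a fast mode $\phi_{\text{fast}}$, both homogeneous solutions of $OS$ with nontrivial data at $Y=0$, chosen to restore the vanishing Neumann condition. The main obstacle I anticipate is the singularity of the Rayleigh operator as $\alpha\to 0$: the estimates on $Ray$ are usable only when the source has vanishing $Y$-average, so the splitting must be designed so that every Rayleigh step is fed such a zero-mean source. Once this is handled and a uniform lower bound on the slow/fast Wronskian-type determinant at $Y=0$ is established, the bounds \eqref{est.thm.linear.1'}--\eqref{est.thm.linear.3'} should follow, with the subcase split at $|\tilde n|\sim \nu^{-3/8}$ reflecting a transition in the relative contributions of the Rayleigh-Airy particular solution and the fast-mode corrector.
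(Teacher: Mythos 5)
Your treatment of case (iii) and your description of the splitting philosophy for case (i) match the paper's. The gap is in case (ii). You assert that the imaginary-part energy estimate (giving control of $\|\sqrt{U_s^\nu}u_n\|_{L^2}$), combined with the interpolation inequality \eqref{proof.prop.Airy.5} and the real-part estimate, yields the sharp bounds $\|u_n\|_{L^2}\lesssim|\tilde n|^{-2/3}\|f_n\|_{L^2}$ and $\|\partial_y u_n\|_{L^2}+|\tilde n|\|u_n\|_{L^2}\lesssim|\tilde n|^{-1/3}\nu^{-1/2}\|f_n\|_{L^2}$. In fact, this is precisely the strategy carried out in Proposition \ref{prop.high} (ii), and it produces only the weaker bounds $\|u_n\|_{L^2}\lesssim|\tilde n|^{-4/5}\nu^{-1/10}\|f_n\|_{L^2}$ and $\|\partial_y u_n\|_{L^2}+|\tilde n|\|u_n\|_{L^2}\lesssim|\tilde n|^{-3/5}\nu^{-7/10}\|f_n\|_{L^2}$. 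These coincide with the stated exponents only at $|\tilde n|\sim\nu^{-3/4}$; at the lower end $|\tilde n|\sim\nu^{-1/2}$ of regime (ii) the energy bound is worse by a positive power of $\nu^{-1}$, and after interpolating to $L^\infty$ and summing in $n$ it would not reproduce the $\nu^{-1/4}$ in \eqref{est.thm.main.1}. The source of the loss is the stretching term $\Re\langle u_{n,2}\partial_y U_s^\nu,u_{n,1}\rangle$: the best one can extract from Hardy, incompressibility, and the weighted bound is $|\Re\langle u_{n,2}\partial_y U_s^\nu,u_{n,1}\rangle|\le C\tilde n^{1/2}\nu^{-1/4}\|u_n\|_{L^2}\|\sqrt{U_s^\nu}u_n\|_{L^2}$, and closing the system with this bound leaves the extra $\nu^{-1/10}|\tilde n|^{-2/15}$.

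To reach the stated case (ii) exponents the paper instead runs the entire Orr-Sommerfeld/Rayleigh-Airy machinery for all of $|\tilde n|\le\delta_*\nu^{-3/4}$, not just for $|\tilde n|\le\nu^{-3/7}$. Regime (ii) is treated in Subsection \ref{subsec.proof.linear} by first solving the symmetrized problem $OS_0[\phi_0]=-f_2-(i/\alpha)\partial_Y f_1$ (Proposition \ref{prop.OS_0}), whose structure lets one extract the full $\eps^{-2/3}\alpha^{-1}$ factor by energy and interpolation, and then correcting via Theorem \ref{thm.OS} with a zero-average source $\partial_Y(U_s'\phi_0)$; this yields \eqref{proof.thm.inear.3}--\eqref{proof.thm.inear.4}, i.e., the sharp case (ii) bounds. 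The energy argument of Section \ref{sec.high} is only used for $|\tilde n|\gtrsim\nu^{-3/4}$, where a constant factor separates the two sets of exponents. You should therefore move the threshold between your two methods up to $\delta_*\nu^{-3/4}$ rather than $\nu^{-3/7}$.

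A smaller imprecision: when you test against $\bar u_n$ and take the imaginary part, you only mention the favorable term $\tilde n\langle U_s^\nu u_n,u_n\rangle$. There is also the term $\tilde n\,\Re\langle(\partial_y U_s^\nu)\phi_n,\partial_y\phi_n\rangle$, which after integration by parts contributes $\frac{\tilde n}{2}\int(\partial_y^2 U_s^\nu)|\phi_n|^2$. Absorbing this into the good weighted term requires the pointwise comparison $|\partial_y^2 U_s^\nu|\le C\nu^{-1/2}\partial_y U_s^\nu$ near $y=0$ from Proposition \ref{prop.pre}, together with the restriction $|\tilde n|\nu^{1/2}\gg1$; this is a nontrivial step that your sketch skips. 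Finally, your characterization of the case (i) subdivision at $|\tilde n|\sim\nu^{-3/8}$ as a transition "in the relative contributions of the Rayleigh-Airy particular solution and the fast-mode corrector" is not quite the paper's mechanism: the threshold comes from the lower bound $|\det M|\gtrsim\alpha^{-1}+\eps^{-1/3}$ on the Wronskian of the slow and fast modes, $\nu^{-3/8}$ being exactly where $\alpha\sim\eps^{1/3}$, while $\nu^{-3/7}$ is where $\alpha\sim\eps^{1/6}$ in the numerator $\alpha+\eps^{1/6}$ arising from the slow-mode bound.
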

\begin{remark}
{\rm (1)  In Theorem \ref{thm.linear}, the associated pressure $p_n$ belongs to $H^1(\R_+)$.

\noindent (2) Estimate \eqref{est.thm.linear.2'} is not a consequence of the interpolation between \eqref{est.thm.linear.1'} and \eqref{est.thm.linear.3'}.
On the other hand, by the interpolation inequality $\|u_n \|_{L^\infty} \leq C \| \pa_y u_n \|_{L^2}^\frac12 \| u_n \|_{L^2}^\frac12$, we have from \eqref{est.thm.linear.1}, \eqref{est.thm.linear.2}, \eqref{est.thm.linear.5}, and \eqref{est.thm.linear.6}, 
\begin{align}
\| u_n\|_{L^\infty} \leq 
\begin{cases}
& \displaystyle \frac{C}{|\tilde n|^\frac12 \nu^\frac14} \| f_n \|_{L^2}\,,\qquad \nu^{-\frac37} \leq |\tilde n| \leq \delta_* \nu^{-\frac34} \,,\\
& \displaystyle \frac{C}{|\tilde n|^\frac32 \nu} \| f_n \|_{L^2}\,, \qquad |\tilde n|\geq \delta_*\nu^{-\frac34}\,.
\end{cases}
\end{align}
}
\end{remark}

\noindent
The proof of Theorem \ref{thm.linear} consists of several steps, and is given in Sections  \ref{sec.Rayleigh} - \ref{sec.high} below. 
The core part of the proof is the study of the Orr-Sommerfeld equation for the streamfunction.
For the moment we assume that $f_n$ is smooth enough, say, $f_n\in H^1 (\R_+)^2$.
The Orr-Sommerfeld equation is deduced from the equation for the vorticity $\omega_n = i\tilde n u_{n,2}-\pa_y u_{n,1}$:
\begin{align}\label{eq.vorticity}
i \tilde n U_s^\nu \omega_n - u_{n,2} \pa_y^2 U_s^\nu - \nu (\pa_y^2 - \tilde n^2) \omega_n = i \tilde n f_{n,2}-\pa_y f_{n,1}\,, \qquad y>0\,.
\end{align}
Let us introduce the streamfunction $\phi_n$ as the solution to the Poisson equation 
\begin{align*}
-(\pa_y^2-\tilde n^2) \phi_n = \omega_n\,, \qquad \phi_n|_{y=0} =0\,.
\end{align*}
The $n$th mode of the velocity $u_n$ is recovered from the formula 
\begin{align} \label{def.velocity_n}
u_{n,1} = \pa_y \phi_n\,, \qquad u_{n,2}  = - i \tilde n \phi_n\,.
\end{align}
Taking into account no-slip boundary condition on $u_{n,1}${,} we obtain the fourth order ODE
\begin{align*}
\begin{cases}
&- i \tilde n U_s^\nu(\pa_y^2-\tilde n^2) \phi_n  +i \tilde n \pa_y^2 U_s^\nu \phi_n   + \nu (\pa_y^2 - \tilde  n^2)^2  \phi_n
= i \tilde n f_{n,2}-\pa_y f_{n,1}  \,, \qquad  y>0\,,\\
& \phi_n |_{y=0} = \pa_y \phi_n |_{y=0} = 0\,.
\end{cases}
\end{align*}
One can check that this fourth order ODE is equivalent to \eqref{eq.linearized}, and in particular, if $\phi_n\in H^4(\R_+)$ is a solution to this ODE then the velocity $u_n$ defined by \eqref{def.velocity_n} solves \eqref{eq.linearized} with a suitable pressure $p_n\in H^1 (\R_+)$.
Next we introduce the rescaled variable $Y=y/\sqrt{\nu}$ and set 
\begin{align}\label{def.rescale}
\sqrt{\nu} \phi(Y)=\phi_n (y)\,, \qquad \frac{1}{\sqrt{\nu}} f (Y)=f_n(y) 
\end{align}
for simplicity. The rescaled unknown then satisfies
\begin{align*}
- i \tilde n U_s (\pa_Y^2-\alpha^2) \phi  +i \tilde n U_s'' \phi   +  (\pa_Y^2 - \alpha^2)^2  \phi
= i \tilde n  f_{2}- \frac{1}{\sqrt{\nu}} \pa_Y f_{1}  \,, \qquad  Y>0\,,
\end{align*}
where $U_s''=\pa_Y^2 U_s$ and we have set 
\begin{align*}
\alpha = \tilde n \sqrt{\nu}\,.
\end{align*}
By dividing by $-i\tilde n$ of the above equation for $\phi$ and by setting $\eps=1/\tilde n$,  we have arrived at the Orr-Sommerfeld equation
\begin{align}\label{eq.OS_start}
\begin{cases}
& U_s (\pa_Y^2-\alpha^2) \phi  - U_s'' \phi   + i \eps (\pa_Y^2 - \alpha^2)^2  \phi
= - f_{2} - \frac{i}{\alpha} \pa_Y f_{1}  \,, \qquad  Y>0\,,\\
& \phi |_{Y=0} = \pa_Y \phi |_{Y=0} = 0\,.
\end{cases}
\end{align}
We note that the constant $\eps=1/\tilde n=\kappa/n$ is small if $|n|\geq 1$ and $\kappa>0$ is small.
For simplicity of notations we set 
\begin{align*}
\| f\| = \| f\|_{L^2_Y} = \big ( \int_0^\infty | f(Y)|^2 d Y \big )^\frac12\,.
\end{align*}

\section{Rayleigh equation}\label{sec.Rayleigh}

In this section we consider the Rayleigh equation in the rescaled variable, where
the Rayleigh operator is defined as $Ray[\varphi] = U_s (\pa_Y^2-\alpha^2) \varphi - U_s'' \varphi$.
Without loss of generality we may assume $\alpha>0$. The system under study is
\begin{align}\label{eq.Ray}
\begin{cases}
& Ray[\varphi] = f \,, \qquad Y>0\,,\\
& \varphi|_{Y=0} = 0\,.
\end{cases}
\end{align}

\begin{proposition}\label{prop.Ray} Let $f/U_s\in L^2 (\R_+)$. Then there exists a unique solution $\varphi \in H^2(\R_+)\cap H^1_0 (\R_+)$ to \eqref{eq.Ray} such that 

\noindent {\rm (i)} when $\alpha \geq 1$, 
\begin{align}
\| \pa_Y\varphi \| +\alpha \| \varphi\| & \leq C \min\big \{ \| \frac{Y}{U_s} f \|, \, \frac{1}{\alpha} \| \frac{f}{U_s} \| \big \} \,, \label{est.prop.Ray.1}\\
\| (\pa_Y^2-\alpha^2) \varphi \| & \leq C  \min \big \{ \| \frac{Y}{U_s} f \|, \, \frac{1}{\alpha} \| \frac{f}{U_s} \| \big \} + \| \frac{f}{U_s} \|\,.\label{est.prop.Ray.2}
\end{align}
\noindent {\rm (ii)} when $0<\alpha\leq 1$, if $(1+Y) \sigma [f]\in L^2 (\R_+)$ with $\sigma [f] (Y)=\int_Y^\infty f \, d Y_1$ in addition, 
\begin{align}
\alpha \| \varphi \| & \leq C \alpha \| (1+Y) \sigma [f] \|  + \frac{C}{\alpha^\frac12} |\int_0^\infty f \, d Y | \,, \label{est.prop.Ray.1'}\\
\| \pa_Y \varphi \| & \leq C \Big ( \| (1+Y)  \sigma [f] \|  + \| f \|\Big )  + \frac{C}{\alpha} |\int_0^\infty f \, d Y | \,, \label{est.prop.Ray.2'}\\
\| (\pa_Y^2-\alpha^2) \varphi \| & \leq C \Big ( \| (1+Y) \sigma [ f] \|  + \| \frac{f}{U_s} \| \Big ) +  \frac{C}{\alpha} |\int_0^\infty f \, d Y |  \,.\label{est.prop.Ray.3'}
\end{align}
\end{proposition}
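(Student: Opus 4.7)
The approach is to reformulate the Rayleigh equation in Sturm--Liouville form. Setting $\psi := \varphi/U_s$ (the condition $\varphi(0)=0$ becomes automatic since $U_s(0)=0$), a direct calculation gives $Ray[\varphi] = \partial_Y(U_s^2 \partial_Y \psi) - \alpha^2 U_s^2 \psi$, an operator naturally adapted to the degeneracy at the boundary. Testing against $\bar\psi$ and integrating by parts---the boundary terms vanish thanks to $U_s^2(0)=0$ and suitable decay at infinity---yields the basic energy identity
\begin{equation*}
\|U_s \partial_Y \psi\|^2 + \alpha^2 \|U_s \psi\|^2 = -\mathrm{Re} \int_0^\infty f \bar\psi \, dY.
\end{equation*}
Converting back via $U_s\psi = \varphi$ and the identity $\|U_s\partial_Y\psi\|^2 = \|\partial_Y\varphi\|^2 + \int (U_s''/U_s)|\varphi|^2$, the lower-order correction is controlled by Proposition~\ref{prop.pre}(1) ($\|YU_s''/U_s\|_{L^\infty_Y}<\infty$) together with the Hardy inequality $\|\varphi/Y\|\leq 2\|\partial_Y\varphi\|$ (using $\varphi(0)=0$), giving $|\int (U_s''/U_s)|\varphi|^2| \leq C\|\partial_Y\varphi\|\|\varphi\|$. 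Existence and uniqueness follow from a Lax--Milgram argument applied to the coercive Sturm--Liouville sesquilinear form.

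In case~(i), $\alpha\geq 1$ (with a threshold depending on $U_s$), Young's inequality absorbs the Hardy correction into $\alpha^2\|\varphi\|^2$. The right-hand side $\int f\bar\psi = \int (f/U_s)\bar\varphi$ is then bounded in two complementary ways: directly by $\|f/U_s\|\|\varphi\|$, which after Young absorption gives the $(1/\alpha)\|f/U_s\|$ bound, and by $\|Yf/U_s\|\|\varphi/Y\|\leq 2\|Yf/U_s\|\|\partial_Y\varphi\|$ via Hardy, giving the $\|Yf/U_s\|$ bound. Taking the minimum produces \eqref{est.prop.Ray.1}. The second-derivative estimate \eqref{est.prop.Ray.2} follows by rewriting the equation as $(\partial_Y^2-\alpha^2)\varphi = (f+U_s''\varphi)/U_s$ and bounding $\|U_s''\varphi/U_s\| \leq 2C\|\partial_Y\varphi\|$.

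Case~(ii), $0<\alpha\leq 1$, is the delicate part and constitutes the main obstacle. The energy estimate no longer closes uniformly as $\alpha\to 0$, reflecting that the formal limit $\partial_Y(U_s^2\partial_Y\psi) = f$ integrates to $U_s^2\partial_Y\psi = -\sigma[f]$, which admits a regular solution at the boundary only when $\sigma[f](0) = m := \int_0^\infty f$ vanishes. The plan is to split $f = g + m\rho$ for a fixed smooth cutoff $\rho$ with $\int \rho = 1$ (so $g$ has mean zero), and correspondingly write $\varphi = \varphi_g + m\varphi_*$. For the zero-mean part $\varphi_g$, a naive energy argument only yields $\alpha\|\varphi_g\| \lesssim \|(1+Y)\sigma[g]\| + \|g\|$, which would degenerate as $\alpha\to 0$; the sharp uniform bound is instead obtained from the integral representation $\varphi_g = L[\sigma[g]] + \alpha^2 L[\sigma[U_s\varphi_g]]$ coming from double integration of the Sturm--Liouville equation. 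Since $\sigma[g](0) = 0$, Proposition~\ref{prop.pre.op}(2) applied with $h=\sigma[g]$ provides the $\alpha$-independent bounds $\|L[\sigma[g]]\| \leq C\|(1+Y)\sigma[g]\|$ and $\|\partial_Y L[\sigma[g]]\| \leq C(\|g\| + \|(1+Y)\sigma[g]\|)$ (also using $\|\sigma[g]\|_{L^1_Y} \lesssim \|(1+Y)\sigma[g]\|$), while the $\alpha^2$-correction is absorbed by a contraction argument for $\alpha$ small. The mean-carrying part $\varphi_*$ is constructed explicitly from the homogeneous solutions $U_s$ and $G_s$ of the $\alpha=0$ Rayleigh equation provided by Proposition~\ref{prop.pre}(2), perturbed to $\alpha>0$: the decaying mode behaves like $e^{-\alpha Y}/\alpha$ at infinity, which produces the singular factors $|m|/\sqrt{\alpha}$ in \eqref{est.prop.Ray.1'} and $|m|/\alpha$ in \eqref{est.prop.Ray.2'}--\eqref{est.prop.Ray.3'}. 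The second-derivative bound \eqref{est.prop.Ray.3'} follows as in case~(i) from reading off the equation itself.
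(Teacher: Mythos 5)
Your proposal follows essentially the same route as the paper: the Sturm--Liouville rewriting $Ray[\varphi]=\partial_Y(U_s^2\partial_Y(\varphi/U_s))-\alpha^2 U_s\varphi$ (the paper divides by $U_s$ and works with $T=\partial_Y^2-\alpha^2-U_s''/U_s$, but the underlying energy identity (5.5) is the same one you write), the split of $f$ into a mean-zero part handled via $L\circ\sigma$ and a mean-carrying part handled via the explicit homogeneous solution $G_s$ damped by $e^{-\alpha Y}$, and the $H^2$ estimate read directly off the equation. Two small technical remarks are worth flagging. First, for case~(i) you absorb the Hardy correction $\int(U_s''/U_s)|\varphi|^2$ into $\alpha^2\|\varphi\|^2$ by Young, which only closes for $\alpha$ above a $U_s$-dependent threshold; the paper instead bounds $|\langle(U_s''/U_s)\varphi,\varphi\rangle|\le C\|\partial_Y\varphi\|\,\|\varphi/(1+Y)\|_{L^\infty}$, uses $\|\varphi/(1+Y)\|_{L^\infty}^2\lesssim\|\varphi\|\,\|\partial_Y\varphi\|$, and feeds this back into the Sturm--Liouville identity to get the bound for all $\alpha\ge 1$ (see the derivation of (5.7)--(5.9)); you should patch your argument to cover $1\le\alpha\le\alpha_0(U_s)$, e.g.\ by this route. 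Second, rather than solving the exact fixed-point relation $\varphi_g=L[\sigma[g]]+\alpha^2L[\sigma[U_s\varphi_g]]$ by contraction (which requires tracking a weighted norm that loses decay under each application of $L\circ\sigma$), the paper simply sets $\varphi_{1,1}=L[\sigma[g]]$ and defines the correction $\varphi_{1,2}$ as the \emph{solution} of $T\varphi_{1,2}=\alpha^2\varphi_{1,1}$, bounded in one step by the energy estimate $\|\varphi_{1,2}\|\le\|\varphi_{1,1}\|$ from (5.7); this avoids the contraction and its attendant decay-bookkeeping, and is the cleaner version of what you are after.
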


\begin{remark}\label{rem.prop.Ray}{\rm (1) {We obtain solutions even for $0 < \alpha < 1$, which is non-trivial. Note that the existence and uniqueness of $H^2$ solutions  requires the  condition $f/U_s\in L^2 (\R_+)$. Nevertheless, it is seen from the proof that the existence and uniqueness of a weak solution in $H^1_0(\R_+)$ is valid under the milder condition $f\in L^2 (\R_+)$. In such a case, the only estimates that are retained are}  
{
\noindent {\rm (i)} when $\alpha \geq 1$, 
\begin{align*}
\| \pa_Y\varphi \| +\alpha \| \varphi\| & \leq C \| \frac{Y}{U_s} f \|,  \,
\end{align*}
\noindent {\rm (ii)} when $0<\alpha\leq 1$, 
\begin{align*}
\alpha \| \varphi \| & \leq C \alpha \| (1+Y) \sigma [f] \|  + \frac{C}{\alpha^\frac12} |\int_0^\infty f \, d Y | \,, \\
\| \pa_Y \varphi \| & \leq C \Big ( \| (1+Y)  \sigma [f] \|  + \| f \|\Big )  + \frac{C}{\alpha} |\int_0^\infty f \, d Y |.
\end{align*}
Moreover}, the solution is locally $H^2$ for $Y>0$, and satisfies 
\begin{align}\label{est.rem.prop.Ray}
\| U_s (\pa_Y^2-\alpha^2) \varphi \| \leq C \| \pa_Y \varphi \| + \| f\|\,,
\end{align}
which is verified from the equation and the Hardy inequality $\| U_s'' \varphi \| \leq C \| \pa_Y \varphi\|$,  in virtue of the sufficient decay of $U_s''$ {and $\varphi|_{Y=0}=0$}.

\noindent (2) {The singularity in $\alpha$, seen  in  \eqref{est.prop.Ray.1'}-\eqref{est.prop.Ray.2'}-\eqref{est.prop.Ray.3'} when $0<\alpha\ll 1$, can not be avoided in general. It is due to the fact that $\alpha \rightarrow 0$ is a singular limit:  indeed the formal limit system
$$  U_s \pa_Y^2 \varphi - U''_s \varphi = f$$
has in general no solution satisfying $\varphi\vert_{Y=0} = 0$. However, as we will show below, it has a solution under the additional condition that $f$ has zero average. This explains that the  singular factor $\alpha^{-\frac{1}{2}}$ or $\alpha^{-1}$ is only in front of   $|\int_0^\infty f \, d Y| $. This will be used crucially in our analysis.} 

\noindent (3) Estimate \eqref{est.prop.Ray.1'} is optimal in view of local regularity; roughly speaking, the $L^2$ norm of $\varphi$ is estimated in terms of $H^{-1}$ norm of $f$. We note that the Marcinkiewicz interpolation theorem yields the bound $\| (1+Y) \sigma [f] \| \leq C \| (1+Y) Y f \|$ as stated in Proposition \ref{prop.pre.op} (1).
}
\end{remark}

The proof of Proposition \ref{prop.Ray}  is a  consequence of the next lemma.
We consider the problem 
\begin{align}\label{eq.Ray''}
\begin{cases}
&  (\pa_Y^2 - \alpha^2 ) \varphi - \frac{U_s''}{U_s} \varphi = h  \,, \qquad Y>0\,,\\
& \varphi|_{Y=0} = 0\,.
\end{cases}
\end{align}

\begin{lemma}\label{lem.Ray} For any $h\in L^2 (\R_+)$ there exists a unique solution $\varphi \in H^2(\R_+) \cap H^1_0 (\R_+)$ and satisfies 

\noindent {\rm (i)} when $\alpha \geq 1$,
\begin{align}
\| \pa_Y \varphi \| + \alpha \| \varphi \| & \leq C \min\big \{ \| Y h \|, ~ \frac{1}{\alpha} \| h \| \big \}\,, \label{est.lem.Ray.1}\\
\| (\pa_Y^2-\alpha^2) \varphi \| & \leq C \min\big \{ \| Y h \|, ~ \frac{1}{\alpha} \| h \|\big \}   + \| h \|\,,\label{est.lem.Ray.2}
\end{align}
\noindent {\rm (ii)} when $0<\alpha\leq 1$, if $(1+Y) \sigma [U_s h] \in L^2 (\R_+)$with $\sigma [U_s h](Y)=\int_Y^\infty U_s h \, d Y_1$ in addition, 
\begin{align}
\alpha \| \varphi \| & \leq C \alpha \| (1+Y) \sigma [U_s h] \|  + \frac{C}{\alpha^\frac12} \big | \int_0^\infty U_s h \, d Y \big |  \,, \label{est.lem.Ray.1'}\\
\| \pa_Y \varphi \| & \leq C \Big ( \| (1+Y) \sigma [U_s h] \| +  \| U_s h \| \Big ) + \frac{C}{\alpha}  \big | \int_0^\infty U_s h \, d Y \big |  \,, \label{est.lem.Ray.2'}\\
\| (\pa_Y^2-\alpha^2) \varphi \| & \leq C \Big ( \| (1+Y) \sigma [U_s h] \|  + \| h \| \Big ) + \frac{C}{\alpha} \big | \int_0^\infty U_s h \, d Y \big |\,.\label{est.lem.Ray.3'}
\end{align}
\end{lemma}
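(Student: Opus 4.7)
My plan is to reduce \eqref{eq.Ray''} to a symmetric divergence-form equation via the change of unknown $\psi := \varphi/U_s$. Since $G_s$ solves $U_s G_s'' - U_s'' G_s = 0$ from Proposition \ref{prop.pre}(2), a direct computation gives
\begin{align*}
U_s \bigl[(\pa_Y^2 - \alpha^2)\varphi - (U_s''/U_s)\varphi\bigr] = \pa_Y(U_s^2 \pa_Y \psi) - \alpha^2 U_s^2 \psi,
\end{align*}
so \eqref{eq.Ray''} is equivalent to $\pa_Y(U_s^2 \pa_Y \psi) - \alpha^2 U_s^2 \psi = U_s h$, with $\varphi(0)=0$ automatic from $U_s(0)=0$. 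The bilinear form $a(\psi_1, \psi_2) = \int_0^\infty U_s^2 (\pa_Y\psi_1 \overline{\pa_Y\psi_2} + \alpha^2 \psi_1 \bar\psi_2)\, dY$ is coercive on the natural weighted Hilbert space of $\psi$'s for which $\varphi = U_s \psi \in H^1_0(\R_+)$, so Lax-Milgram yields a unique weak solution. Elliptic regularity then upgrades this to $\varphi \in H^2(\R_+) \cap H^1_0(\R_+)$, using $U_s''/U_s \in L^\infty(\R_+)$ from Proposition \ref{prop.pre}(1) and the identity $\pa_Y^2 \varphi = \alpha^2 \varphi + (U_s''/U_s)\varphi + h$.

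For case (i), $\alpha \geq 1$, I would test \eqref{eq.Ray''} against $-\bar\varphi$ and integrate by parts to obtain
\begin{align*}
\|\pa_Y\varphi\|^2 + \alpha^2\|\varphi\|^2 = -\int_0^\infty h \bar\varphi\, dY - \int_0^\infty (U_s''/U_s)|\varphi|^2\, dY.
\end{align*}
The source is bounded by $\min\{\|h\|\|\varphi\|, \, 2\|Yh\|\|\pa_Y\varphi\|\}$ via Cauchy-Schwarz and the Hardy inequality $\|\varphi/Y\| \leq 2\|\pa_Y\varphi\|$. For the potential contribution, Proposition \ref{prop.pre}(1) gives $|U_s''/U_s| \leq M/Y$ for some $M > 0$; combining with Hardy yields $\Bigl|\int (U_s''/U_s)|\varphi|^2\Bigr| \leq 2M\|\pa_Y\varphi\|\|\varphi\|$. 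Young's inequality absorbs this term into the left-hand side once $\alpha \geq \alpha_0(M)$, and on the compact interval $1 \leq \alpha \leq \alpha_0$, the Lax-Milgram bound is uniform in $\alpha$ and already implies \eqref{est.lem.Ray.1}. Estimate \eqref{est.lem.Ray.2} then follows by rewriting the equation as $(\pa_Y^2 - \alpha^2)\varphi = h + (U_s''/U_s)\varphi$ and taking $L^2$-norms.

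Case (ii), $0 < \alpha \leq 1$, is more delicate because $\alpha \to 0$ is a singular limit: the formal equation $\pa_Y(U_s^2 \pa_Y\psi) = U_s h$ admits a solution with $\varphi(0)=0$ only when $H(0) := \int_0^\infty U_s h \, dY$ vanishes. I would set $H(Y) := \sigma[U_s h](Y)$ so that $U_s h = -\pa_Y H$, rewrite the equation as $\pa_Y(U_s^2 \pa_Y\psi + H) = \alpha^2 U_s^2 \psi$, and integrate from $Y$ to $+\infty$ to obtain $U_s^2 \pa_Y\psi + H = -\alpha^2 \int_Y^\infty U_s^2 \psi\, dY_1$. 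Evaluating at $Y=0$ yields the compatibility identity $H(0) = -\alpha^2 \int_0^\infty U_s^2 \psi \, dY$, which makes explicit the $\alpha^{-2}$ blow-up of $\int U_s^2 \psi$ when $H(0) \neq 0$. To isolate this obstruction, I would split $h = h_1 + h_2$ with $h_2 = H(0)\chi$ for a fixed smooth $\chi$ supported away from $Y=0$ and normalized so that $\int U_s \chi \, dY = 1$; then $\int U_s h_1 \, dY = 0$ and a corresponding splitting $\varphi = \varphi_1 + \varphi_2$ follows. For $\varphi_1$, testing the equation against $\bar\psi_1 = \bar\varphi_1/U_s$ and integrating by parts (no boundary contribution thanks to $\sigma[U_s h_1](0)=0$) delivers, via Cauchy-Schwarz and the pointwise bound $|\sigma[U_s h_1](Y)| \lesssim \|(1+Y)\sigma[U_s h]\|$, the terms proportional to $\|(1+Y)\sigma[U_s h]\|$ in \eqref{est.lem.Ray.1'}--\eqref{est.lem.Ray.3'}. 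For $\varphi_2$, the explicit form of $h_2$ and the compatibility identity produce the factors $\alpha^{-1/2}|H(0)|$ and $\alpha^{-1}|H(0)|$ at the required places. Bounds \eqref{est.lem.Ray.2'} and \eqref{est.lem.Ray.3'} are then completed by using $(\pa_Y^2 - \alpha^2)\varphi = h + (U_s''/U_s)\varphi$ together with the already established controls.

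The main difficulty lies in Case (ii): the decomposition must be chosen so that the singular powers $\alpha^{-1/2}$ and $\alpha^{-1}$ arise only in front of $|H(0)|$, without contaminating the bound involving $\|(1+Y)\sigma[U_s h]\|$. Sloppy treatment of the boundary term $H(0)\bar\psi(0)$ generated when integrating the source by parts would immediately spoil these optimal scalings in $\alpha$.
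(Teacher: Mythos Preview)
Your overall framework---the substitution $\psi=\varphi/U_s$, the divergence-form identity, and the splitting $h=h_1+h_2$ with $\int U_s h_1=0$---is exactly the one the paper uses. But in case (ii) there is a genuine gap: testing against $\bar\psi_1$ gives
\[
\|U_s\pa_Y\psi_1\|^2+\alpha^2\|\varphi_1\|^2 \;\le\; \bigl|\langle \sigma[U_sh_1]/U_s,\,U_s\pa_Y\psi_1\rangle\bigr|
\;\le\;\|\sigma[U_sh_1]/U_s\|\,\|U_s\pa_Y\psi_1\|,
\]
which yields at best $\alpha\|\varphi_1\|\le \|\sigma[U_sh_1]/U_s\|\le C(\|U_sh_1\|+\|\sigma[U_sh_1]\|)$. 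This is a factor $\alpha^{-1}$ away from the required $\alpha\|\varphi_1\|\le C\alpha\|(1+Y)\sigma[U_sh]\|$, i.e.\ the $\alpha$-\emph{independent} bound $\|\varphi_1\|\le C\|(1+Y)\sigma[U_sh]\|$. Your claimed pointwise inequality $|\sigma[U_sh_1](Y)|\lesssim\|(1+Y)\sigma[U_sh]\|$ is simply false for a generic $L^2$ function and cannot rescue this. The paper obtains the missing $L^2$ bound not by energy but by an \emph{explicit approximate inverse} of the $\alpha=0$ operator: it sets $\varphi_{1,1}=L[\sigma[U_sh_1]]=U_s\int_Y^\infty U_s^{-2}\sigma[U_sh_1]\,dY_1$, which solves $\tfrac{1}{U_s}\pa_Y(U_s^2\pa_Y(\varphi_{1,1}/U_s))=h_1$ with $\varphi_{1,1}(0)=0$ thanks to the zero-mean condition, and Proposition~\ref{prop.pre.op}(2) gives $\|\varphi_{1,1}\|\le C\|(1+Y)\sigma[U_sh_1]\|$ directly; the remainder $\varphi_{1,2}$ then solves $T\varphi_{1,2}=\alpha^2\varphi_{1,1}$, for which the crude energy bound suffices. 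Your integral identity $U_s^2\pa_Y\psi+H=-\alpha^2\int_Y^\infty U_s^2\psi$ is the right starting point, but you never exploit it beyond $Y=0$.

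A similar but more serious gap affects $\varphi_2$: saying the compatibility identity ``produces the factors $\alpha^{-1/2}|H(0)|$ and $\alpha^{-1}|H(0)|$'' skips all the work. The paper builds $\varphi_2$ from the explicit solution $G_s$ of the $\alpha=0$ homogeneous equation (Proposition~\ref{prop.pre}(2)), correcting boundary values via $\varphi_{2,1,2}=-G_s e^{-\alpha Y}H(0)$, and then proves the crucial $L^\infty$ bound $\|\varphi_2/(1+Y)\|_{L^\infty}\le C\alpha^{-1}|H(0)|$ by an $L^{2(l+1)}$-energy argument sent to $l\to\infty$; only from this does the $H^1$ estimate with the correct power follow via \eqref{proof.lem.Ray.11}. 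Finally, in case (i) your treatment of $1\le\alpha\le\alpha_0$ is loose: the Lax--Milgram bound controls $\|U_s\pa_Y\psi\|$ and $\alpha\|\varphi\|$, not $\|\pa_Y\varphi\|$ directly; you need to combine it with the $\bar\varphi$-energy (as you can, but you do not say so). The paper avoids this case split altogether by using the spatial decay of $U_s''$ to bound $|\langle (U_s''/U_s)\varphi,\varphi\rangle|\le C\|\pa_Y\varphi\|\,\|\varphi/(1+Y)\|_{L^\infty}$ and then interpolating.
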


\begin{proof} Let $T$ be the operator $T=\pa_Y^2-\alpha^2 - U_s''/U_s$, which is realized in $L^2 (\R_+)$ with the domain $H^2 (\R_+)\cap H^1_0 (\R_+)$. Note that $T$ is relatively compact to $\pa_Y^2-\alpha^2$ whose domain is taken in the same way. This is because $\displaystyle \lim_{Y\rightarrow \infty} \frac{U_s''}{U_s} =0$ by our assumptions and because the Hardy inequality implies 
\begin{align*}
\| \frac{U_s''}{U_s} \varphi \|\leq \| \frac{YU_s''}{U_s} \|_{L^\infty} \| \frac{\varphi}{Y}\|\leq 2 \| \frac{YU_s''}{U_s} \|_{L^\infty} \|\pa_Y \varphi \|\,, \qquad \varphi \in H^1_0 (\R_+)\,,
\end{align*}
and thus the term $\frac{U_s''}{U_s}\varphi$ is a lower order operator both in view of regularity and spatial decay. Since the spectrum of $\pa_Y^2-\alpha^2$ is contained in $\{\lambda\in \C~|~\Re \lambda\leq -\alpha^2\}$ the spectrum of $T$ outside the set $\{\lambda\in \C~|~\Re \lambda\leq -\alpha^2\}$  consists only of isolated eigenvalues with finite multiplicities.
Therefore, to show the invertiblity of $T$ in $L^2 (\R_+)$ it suffices to show the injectivity of $T$.
{To this end we observe the identity 
\begin{align*}
T\varphi = \frac{1}{U_s} \pa_Y \big ( U_s^2 (\pa_Y (\frac{\varphi}{U_s}) \big ) -\alpha^2 \varphi
\end{align*}
and then, from the computation for $\varphi\in H^2 (\R_+)\cap H^1_0 (\R_+)$,
\begin{align*}
\lim_{\delta\downarrow 0} \int_\delta^\infty \frac{1}{U_s} \pa_Y \big ( U_s^2 \pa_Y (\frac{\varphi}{U_s} ) \big ) \, \overline{\varphi} \,  d Y & =  \lim_{\delta\downarrow 0} \Big ( - \int_\delta^\infty  U_s^2 |\pa_Y (\frac{\varphi}{U_s} )|^2 \,  d Y -  \pa_Y ( \frac{\varphi}{U_s} ) (\delta) U_s (\delta) \overline{\varphi}  (\delta) \Big ) \\
& = - \| U_s \pa_Y (\frac{\varphi}{U_s}) \|^2\,,
\end{align*}
we verify the identity
\begin{align}\label{proof.lem.Ray.5'} 
\| U_s \pa_Y (\frac{\varphi}{U_s} ) \| ^2 + \alpha^2 \| \varphi \|^2 = - \Re \langle h, \varphi \rangle\,.
\end{align}}
Equality \eqref{proof.lem.Ray.5'} implies the injectivity of $T$, and thus, $T$ is invertible in $L^2 (\R_+)$ as explained above. In particular, there exists a unique solution $\varphi\in H^2(\R_+) \cap H^1_0 (\R_+)$ to $T\varphi = h$.

\mspace
Next we observe that the inner product with $\varphi$ in the equation $T\varphi = h$ also provides
\begin{align}\label{proof.lem.Ray.10} 
\| \pa_Y \varphi\|^2 + \alpha^2 \| \varphi\|^2  = - \Re \langle \frac{U_s''}{U_s} \varphi, \varphi\rangle - \Re \langle h, \varphi \rangle \,.
\end{align}
The first term in the right-hand side of \eqref{proof.lem.Ray.10} is estimated as
\begin{align*}
| \langle \frac{U_s''}{U_s}\varphi ,  \varphi \rangle | \leq \| \frac{Y(1+Y)U_s''}{U_s} \|_{L^2}  \| \frac{\varphi}{Y} \| \|\frac{1}{1+Y} \varphi \|_{L^\infty} \leq C \| \pa_Y \varphi\| \| \frac{\varphi}{1+Y} \|_{L^\infty} \,.
\end{align*}
Thus from \eqref{proof.lem.Ray.5'} and \eqref{proof.lem.Ray.10}, we obtain 
\begin{align}\label{proof.lem.Ray.11} 
\begin{split}
\| U_s \pa_Y (\frac{\varphi}{U_s} ) \|^2 + \alpha^2 \| \varphi \|^2 & \leq  |\Re \langle h, \varphi \rangle |\,, \\
\| \pa_Y \varphi \|^2 & \leq C \| \frac{\varphi}{1+Y}  \|_{L^\infty}^2 + C |\Re \langle h, \varphi \rangle |\,.
\end{split}
\end{align}

\mspace
(i) When $\alpha \geq 1$:
In this case estimate \eqref{est.lem.Ray.1} easily follows from \eqref{proof.lem.Ray.11} by applying $\| \frac{\varphi}{1+Y} \|_{L^\infty}^2 \leq C \| \varphi\| \| \pa_Y \varphi \|$.
Finally, the estimate of $(\pa_Y^2-\alpha^2) \varphi$ is obtained from \eqref{eq.Ray''}, for 
\begin{align*}
\| (\pa_Y^2-\alpha^2) \varphi \| \leq \| \frac{U_s''}{U_s} \varphi \| + \| h \| \leq 2 \| \frac{Y U_s''}{U_s} \|_{L^\infty} \| \frac{\varphi}{Y} \| + \| h \| \leq C \| \pa_Y \varphi \| + \| h\| \,.
\end{align*} 
The proof is complete.

\mspace
(ii) When $0<\alpha \leq 1$: This case requires a more delicate analysis.
We decompose $h$ as
\begin{align}\label{proof.lem.Ray.13} 
h = h_1 + h_2\,, \qquad h_2 = (\int_0^\infty U_s h \, d Y) \rho\,, \qquad \int_0^\infty U_s h_1 \, d Y=0\,,
\end{align}
where $\rho\in C_0^\infty (\R_+)$, $\int_0^\infty U_s \rho \, d Y =1$, is taken independently of $\alpha$ and $h$, and we may assume that ${\rm supp}\, \rho\subset \{1\leq Y\leq 2\}$.
According to the decomposition of $h=h_1+h_2$ as above, we decompose $\varphi$ as $\varphi=\varphi_1+ \varphi_2$, where 
\begin{align*}
T\varphi_j = h_j\,, \qquad \varphi_j |_{Y=0} =0\,.
\end{align*}

\noindent {\it Step 1 (Estimate of $\varphi_1$).} 
Let $\varphi_{1,1}$ be the function defined by 
\begin{align}\label{proof.lem.Ray.14} 
\varphi_{1,1} =  U_s \int_Y^\infty \frac{\sigma  [U_s h_1]}{U_s^2}  \, d Y_1 = L \big [\sigma [U_s h_1]\big ]\,,
\end{align}
where $L$ is studied in Proposition \ref{prop.pre.op} (2), and $\varphi_{1,1}$ satisfies $ \frac{1}{U_s} \pa_Y \Big ( U_s^2 \pa_Y (\frac{\varphi_{1,1}}{U_s}) \Big ) = h_1$ in $Y>0$.
Moreover, in virtue of $\int_0^\infty U_s h_1 \, d Y =0$, we can write $\sigma [U_s h_1] (Y_1) = - \int_0^{Y_1} U_s h_1\, d Y_2$, which ensure the boundary condition $\varphi_{1,1}|_{Y=0}=0$ as well.
From Proposition \ref{prop.pre.op} (2) we have the estimate of $\varphi_{1,1}$ as follows. 
\begin{align}\label{proof.lem.Ray.15} 
\| \varphi_{1,1} \|_{L^2}  = \| L\big [\sigma [U_s h_1] \big ] \|  \leq C \| (1+Y) \sigma [U_s h_1] \| \,.
\end{align} 
We look for the solution $\varphi_1$ of the form $\varphi_1 = \varphi_{1,1} + \varphi_{1,2}$, and thus, $\varphi_{1,2}$ is the solution to $T \varphi_{1,2} = \alpha^2 \varphi_{1,1}$ in $Y>0$ and $\varphi_{1,2}|_{Y=0} =0$. From \eqref{proof.lem.Ray.11} we have $\| \varphi_{1,2} \|^2 \leq C  \| \varphi_{1,1} \|^2$,
which gives 
\begin{align}\label{proof.lem.Ray.16} 
\| \varphi_1 \| \leq \| \varphi_{1,1} \| + \| \varphi_{1,2} \| \leq C \| (1+Y) \sigma [U_s h_1] \|\,.
\end{align}
Then, since $T \varphi_1 = h_1$, we have again from \eqref{proof.lem.Ray.11} and $h_1 = -\frac{1}{U_s} \pa_Y \sigma [U_s h_1]$ and $\sigma [U_s h_1]|_{Y=0} = \int_0^\infty U_s h \, d Y =0$,
\begin{align*}
\| U_s \pa_Y (\frac{\varphi_1}{U_s}) \|^2 \leq |\Re \langle h_1, \varphi_1\rangle | \leq \| \frac{\sigma [U_s h_1]}{U_s} \| \, \| U_s \pa_Y (\frac{\varphi_1}{U_s}) \|\,,
\end{align*}
that is, $\| U_s \pa_Y (\frac{\varphi_1}{U_s}) \| \leq \| \frac{\sigma [U_s h_1]}{U_s} \|$,
and thus, \eqref{proof.lem.Ray.11} yields
\begin{align}\label{proof.lem.Ray.17} 
\| \pa_Y \varphi_{1} \|^2 \leq C \| \pa_Y \varphi _{1} \| \| \varphi_{1} \|  + C |\Re \langle h_1, \varphi_{1}\rangle | &\leq C \| \varphi_1\|^2 + C \| \frac{\sigma [U_s h_1]}{U_s} \| \, \| U_s \pa_Y (\frac{\varphi_1}{U_s})  \|  \nonumber \\
& \leq C \| (1+Y) \sigma [U_s h_1] \|^2 + \| \frac{\sigma [U_s h_1]}{U_s}  \|^2 \nonumber \\
& \leq C  \| (1+Y) \sigma [U_s h_1] \|^2 + C \| U_s h_1  \|^2\,.
\end{align}
Here we have used the Hardy-type  inequality in the last line: $\| \frac{\sigma [U_s h_1]}{U_s} \| \leq C \big ( \| \pa_Y \sigma [U_s h_1] \| + \| \sigma [U_s h_1] \| \big )$. 
The $H^2$ estimate is then obtained from the equation and  \eqref{proof.lem.Ray.17} as
\begin{align}\label{proof.lem.Ray.18} 
\| (\pa_Y^2-\alpha^2) \varphi_1 \| \leq \| \frac{U_s''}{U_s} \varphi_1 \| + \| h_1 \| & \leq C \| \pa_Y \varphi_1 \| + \| h_1 \|  \nonumber \\
& \leq C \| (1+Y) \sigma [U_s h_1] \| + C \| U_s h_1 \| + \| h_1 \| \,.
\end{align}

\noindent {\it Step 2 (Estimate of $\varphi_2$).}
Next we consider the estimate of $\varphi_2$, which is the solution to $T \varphi_2 = h_2$ with $\varphi_2|_{Y=0} =0$, where $h_2=(\int_0^\infty U_s h \, d Y) \rho$.
First we set 
\begin{align}\label{proof.lem.Ray.19} 
\begin{split}
\varphi_{2,1,1} & = U_s \int_Y^\infty \frac{1}{U_s^2} \int_{Y_1}^\infty U_s h_2 \, d Y_2 \, d Y_1 =  L \big [ \sigma [U_s \rho] \big ] \int_0^\infty U_s h \, d Y\,,\\
\varphi_{2,1,2} & = - U_s \int_Y^{Y_0} \frac{1}{U_s^2} \, d Y_1 \, e^{-\alpha Y} \int_0^\infty U_s h \, d Y = - G_s  \, e^{-\alpha Y} \int_0^\infty U_s h \, d Y\,.
\end{split}
\end{align}
Here the function $G_s$ is studied in Proposition \ref{prop.pre}.
Then, since $\rho\in C_0^\infty (\R_+)$,  Proposition \ref{prop.pre.op} (2) for $L$ and Proposition \ref{prop.pre} (2) for $G_s$ imply 
\begin{align}\label{proof.lem.Ray.21} 
\| \pa_Y \varphi_{2,1,1} \| + \| \varphi_{2,1,1} \| + \| \varphi_{2,1,1}\|_{L^\infty}  + \| \frac{\varphi_{2,1,2}}{1+Y} \|_{L^\infty} & \leq C |\int_0^\infty U_s h\, d Y |\,.
\end{align}
Here $C$ is independent of $\alpha$.
Moreover, the same computation as in the proof of Proposition \ref{prop.pre} (2) leads to $\varphi_{2,1,1} (0) = \frac{1}{U_s'(0)} \int_0^\infty U_s h \, d Y$ thanks to $\int_0^\infty U_s \rho \, d Y=1$, and thus, we have 
\begin{align*}
(\varphi_{2,1,1} +\varphi_{2,1,2} )|_{Y=0} = 0\,.
\end{align*}
In particular, together with the estimates in Propositions \ref{prop.pre} and \ref{prop.pre.op} we see that $\varphi_{2,1} = \varphi_{2,1,1}+\varphi_{2,1,2}$ belongs to $H^1_0 (\R_+)$. 
Moreover, $\varphi_{2,1}$ satisfies for $Y>0$,
\begin{align*}
T\varphi_{2,1} & =  h_2 + 2 \alpha \pa_Y G_s  \, e^{-\alpha Y} \int_0^\infty U_s h \, d Y -\alpha^2 \varphi_{2,1,1} \nonumber \\
& = : h_2 + g_1 \,.
\end{align*}
To correct the error term $g_1$ we take $\varphi_{2,2}$ as the solution to $T\varphi_{2,2} = - g_1$ with $\varphi_{2,2}|_{Y=0}=0$. 
Let us decompose $g_1$ as $g_1=g_1\chi + g_1 (1-\chi)$, where $\chi$ is a smooth cut-off such that $\chi=1$ for $0\leq Y\leq 1$ and $\chi=0$ for $Y\geq 2$, and let $\varphi_{2,2,1}$ and $\varphi_{2,2,2}$ be respectively the solutions in $H^2(\R_+)\cap H^1_0 (\R_+)$ to $T \varphi_{2,2,1}=-g_1 \chi$ and $T \varphi_{2,2,2}=-g_1 (1-\chi)$.
From the formula $T = \frac{1}{U_s} \pa_Y (U_s^2 \pa_Y (\frac{\cdot}{U_s}))-\alpha^2 $ we have the estimate 
\begin{align}\label{proof.lem.Ray.25'} 
\| \frac{\varphi_{2,2,2}}{U_s} \|_{L^\infty} {\leq \frac{C}{\alpha^2} \| \frac{g_1 (1-\chi)}{U_s} \|_{L^\infty} \le \frac{C}{\alpha^2} \| g_1 (1-\chi) \|_{L^\infty} \,.}
\end{align}
Indeed, for $l\in \N$ we compute $\langle T\varphi_{2,2,2}, \varphi_{2,2,2} (\frac{\varphi_{2,2,2}}{U_s})^{2l}\rangle = - \langle g_1 (1-\chi),  \varphi_{2,2,2} (\frac{\varphi_{2,2,2}}{U_s})^{2l}\rangle$, which gives 
\begin{align*}
& (2 l + 1) \int_0^\infty | \pa_Y (\frac{\varphi_{2,2,2}}{U_s}) |^2 |\frac{\varphi_{2,2,2}}{U_s} |^{2l}  U_s^2 \, d Y + \alpha^2 \int_0^\infty |\frac{\varphi_{2,2,2}}{U_s}|^{2(l+1)} U_s^{{2}} \, d Y \\
& \leq \int_0^\infty {\bigl|\frac{g_1 (1-\chi)}{U_s}\bigr| \, \bigl|\frac{\varphi_{2,2,2}}{U_s} \bigr|^{2l+1} U_s^2} \, d Y\,,
\end{align*}
which gives 
\begin{align*}
\Big (\int_0^\infty |\frac{\varphi_{2,2,2}}{U_s} |^{2(l+1)} U^{{2}}_s \, d Y\Big )^\frac{1}{2(l+1)} \leq \frac{1}{\alpha^2} \Big ( \int_0^\infty {\big|\frac{g_1 (1-\chi)}{U_s}\bigr|^{2(l+1)} U_s^2}\, d Y\Big )^\frac{1}{2(l+1)}\,. 
\end{align*}
Then taking the limit $l\rightarrow \infty$ yields \eqref{proof.lem.Ray.25'}.
Thus we have from \eqref{proof.lem.Ray.21} and from Proposition \ref{prop.pre} (2) for the $L^\infty$ bound of $\pa_Y G_s$ in $Y\geq 1$,
\begin{align}\label{proof.lem.Ray.25} 
\| \frac{\varphi_{2,2,2}}{U_s} \|_{L^\infty} \leq \frac{C}{\alpha} |\int_0^\infty U_s h\, d Y |\,.
\end{align}
As for $\varphi_{2,2,1}$, we have from \eqref{proof.lem.Ray.11} that 
\begin{align*}
\| \varphi_{2,2,1}\|\leq \frac{C}{\alpha^2} \| g_1\chi\|\leq \frac{C}{\alpha} |\int_1^2 U_s h_2\, d Y|\,,
\end{align*}
and thus, again from \eqref{proof.lem.Ray.11} and the interpolation inequality we have 
\begin{align*}
\| \pa_Y \varphi_{2,2,1} \|^2 \leq C \| \varphi_{2,2,1} \|^2 + C\| Y g_1\chi \|^2 \leq \frac{C}{\alpha^2} |\int_0^\infty U_s h\, d Y|\,.
\end{align*}
Collecting these, we obtain 
\begin{align*}
\| \varphi_{2,2}\|_{L^\infty} \leq \| \varphi_{2,2,1} \|_{L^\infty} + \| \varphi_{2,2,2} \|_{L^\infty} \leq \frac{C}{\alpha} |\int_0^\infty U_s h\, d Y|\,.
\end{align*}
Thus, $\varphi_{2}=\varphi_{2,1}+\varphi_{2,2}=\varphi_{2,1,1} + \varphi_{2,1,2} + \varphi_{2,2}$ satisfies, by \eqref{proof.lem.Ray.21},
\begin{align}\label{proof.lem.Ray.27} 
\|\frac{\varphi_2}{1+Y} \|_{L^\infty} \leq \frac{C}{\alpha} |\int_0^\infty U_s h \, d Y|\,.
\end{align}
Then the fact $T\varphi_2 =  h_2$ with $\varphi_2|_{Y=0}=0$ and \eqref{proof.lem.Ray.11} with \eqref{proof.lem.Ray.27} yield
\begin{align}\label{proof.lem.Ray.28} 
\| \pa_Y \varphi_2 \|^2 & \leq \frac{C}{\alpha^2} |\int_0^\infty U_s h \, d Y|^2 + C \| Y h_2 \|^2\leq \frac{C}{\alpha^2}   |\int_0^\infty U_s h \, d Y|^2 \,,
\end{align}
and therefore,  again from \eqref{proof.lem.Ray.11},
\begin{align}\label{proof.lem.Ray.29} 
\alpha^2 \|\varphi_2 \|^2 \leq C\| Y h_2 \| \|\pa_Y \varphi_2 \| \leq \frac{C}{\alpha} |\int_0^\infty U_s h\, d Y|^2\,.
\end{align} 
Finally the $H^2$ estimate of $\varphi_2$ is obtained from the equation as 
\begin{align}\label{proof.lem.Ray.30} 
\| (\pa_Y^2-\alpha^2) \varphi_2 \| \leq \| \frac{U_s''}{U_s} \varphi_2 \| + \| h_2 \| \leq C \| \pa_Y \varphi_2 \| + \| h_2 \| \leq \frac{C}{\alpha} |\int_0^\infty U_s h\, d Y|\,.
\end{align}
Collecting \eqref{proof.lem.Ray.16}, \eqref{proof.lem.Ray.17}, \eqref{proof.lem.Ray.18} with $h_1=h-(\int_0^\infty U_s h \, d Y) \rho$, and \eqref{proof.lem.Ray.28}, \eqref{proof.lem.Ray.29}, and \eqref{proof.lem.Ray.30},  we obtain the estimates of $\varphi=\varphi_1+\varphi_2$. The proof is complete.
\end{proof}

\


The next proposition will be used to construct a slow mode in the case $0<\alpha\ll 1$, the boundary corrector for the Orr-Sommerfeld equation. 
For later use let us introduce the operator 
\begin{align}\label{def.op.K}
K[f] (Y) = U_s \int_Y^\infty \frac{1}{U_s^2} \int_{Y_1}^\infty  f \, d Y_2 \, d Y_1 = L\big [ \sigma [f] \big ] (Y) \,, \qquad  f\in C_0^\infty (\R_+)\,.
\end{align} 
The estimate of $K$ will be derived from the estimates of $L$ and $\sigma$ in Proposition \ref{prop.pre.op}.
\begin{proposition}\label{prop.slow.Ray} For any  {$0<\alpha\leq 1$}, there exists a function $\varphi\in H^1 (\R_+)$ satisfying 
\begin{align*}
Ray[\varphi] =0\qquad Y>0
\end{align*}
in the sense of distributions and the following properties: $\varphi = \varphi_0 + \varphi_1 + \varphi_2$, where 
\begin{align}
& \varphi_0 = U_s e^{-\alpha Y}\,, \qquad \varphi_1|_{Y=0} = \frac{U_E^2}{U_s'(0)} \alpha + O (\alpha^2) \,, \label{est.prop.slow.Ray.1}\\
& \|  \pa_Y \varphi_1 \| +   \| \varphi_1 \| \leq C \alpha\,, \label{est.prop.slow.Ray.2}\\
& \|  \pa_Y \varphi_2 \| +  \alpha \|  \varphi_2 \|  \leq C \alpha^{{3/2}}\,. \label{est.prop.slow.Ray.3}
\end{align}
Here $C$ is independent of $\alpha$. 
If $\frac{U_s''}{U_s}\in L^2 (\R_+)$ in addition, then $\varphi_1$ and $\varphi_2$ belong to $H^2 (\R_+)$.
\end{proposition}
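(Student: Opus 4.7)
The strategy is to build $\varphi$ as a perturbation of the explicit leading-order ansatz $\varphi_0 := U_s\,e^{-\alpha Y}$. This is natural since at $\alpha=0$, $U_s$ itself solves $Ray[U_s]=0$ (as $U_s\pa_Y^2 U_s - U_s'' U_s = 0$), while the factor $e^{-\alpha Y}$ enforces decay at infinity when $\alpha>0$. A direct calculation of $(\pa_Y^2-\alpha^2)(U_s e^{-\alpha Y})=(U_s''-2\alpha U_s')e^{-\alpha Y}$ gives
\begin{equation*}
Ray[\varphi_0] = -2\alpha\, U_s U_s'\,e^{-\alpha Y},
\end{equation*}
so the task reduces to constructing $\varphi_1 + \varphi_2\in H^1(\R_+)$ with $Ray[\varphi_1+\varphi_2]=2\alpha U_s U_s' e^{-\alpha Y}$, satisfying the stated splitting and bounds.

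For $\varphi_1$ we use the operator $K$ from \eqref{def.op.K}, which is a right-inverse for the $\alpha=0$ Rayleigh operator. Indeed, using the factorization $(\pa_Y^2 - U_s''/U_s)\varphi = U_s^{-1}\pa_Y(U_s^2\pa_Y(\varphi/U_s))$, the definition $K[F]=L[\sigma[F]]$ with $\pa_Y\sigma[F]=-F$ directly gives $U_s\pa_Y^2 K[F] - U_s'' K[F] = F$. We therefore set
\begin{equation*}
\varphi_1 := 2\alpha\, K\bigl[U_s U_s'\,e^{-\alpha Y}\bigr],
\end{equation*}
so that $Ray[\varphi_1] = 2\alpha U_s U_s' e^{-\alpha Y} - 2\alpha^3 U_s K[U_s U_s'e^{-\alpha Y}]$. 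The boundary value is computed exactly as for $G_s(0)=1/U_s'(0)$ in Proposition \ref{prop.pre}(2), giving $K[F](0)=\frac{1}{U_s'(0)}\int_0^\infty F\,dY$. Integration by parts and the rescaling $Y=Z/\alpha$ yield $\int_0^\infty 2U_s U_s' e^{-\alpha Y}\,dY = \alpha\int_0^\infty U_s^2 e^{-\alpha Y}\,dY \to U_E^2$ as $\alpha\to 0$, hence $\varphi_1(0) = \alpha U_E^2/U_s'(0) + O(\alpha^2)$. For the norm bounds Proposition \ref{prop.pre.op} yields $\|K[F]\|\le C\|(1+Y)\sigma[F]\|$ and a similar bound on $\pa_Y K[F]$. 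One then integrates by parts to rewrite $\sigma[F](Y)=-\int_Y^\infty W'(Y_1)e^{-\alpha Y_1}\,dY_1$ with $W:=U_E^2-U_s^2$, and uses $|W'|\le C(1+Y)^{-3}$ from \eqref{assume.3} to conclude $|\sigma[F](Y)|\le C(1+Y)^{-2}$ uniformly in $\alpha\in(0,1]$. The resulting norms of $\sigma[F]$ and $F$ being $O(1)$, one obtains $\|\pa_Y\varphi_1\| + \|\varphi_1\|\le C\alpha$.

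Finally, $\varphi_2$ is defined as the $H^2\cap H^1_0$ solution of $Ray[\varphi_2] = 2\alpha^3 U_s K[U_s U_s' e^{-\alpha Y}]=:f$ furnished by Proposition \ref{prop.Ray}(ii). The source is of order $\alpha^3$, and a careful asymptotic analysis of $K[F]\sim 1/(U_E Y)$ in the intermediate regime $1\ll Y\ll 1/\alpha$, followed by exponential decay beyond, yields $\|f\|=O(\alpha^3)$, $\|(1+Y)\sigma[f]\|=O(\alpha^{3/2})$ and $|\int_0^\infty f\,dY|=O(\alpha^{5/2})$. Substituting into Proposition \ref{prop.Ray}(ii), the singular prefactors $\alpha^{-1/2}$ and $\alpha^{-1}$ are absorbed by the extra $\alpha^3$ gain in the residual, producing $\|\pa_Y\varphi_2\|+\alpha\|\varphi_2\|\le C\alpha^{3/2}$. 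The main technical obstacle is precisely this last bookkeeping: the slowly decaying tail of $U_s K[F]$ threatens to generate a marginal $|\log\alpha|$-loss, and removing it may require supplementing $\varphi_1$ by an additional explicit $O(\alpha^3)$-correction (analogous to the $h=h_1+h_2$ splitting employed in the proof of Lemma \ref{lem.Ray}) that cancels the average of the residual before invoking Proposition \ref{prop.Ray}(ii). Under the extra assumption $U_s''/U_s\in L^2$, the $H^2$-regularity of $\varphi_1$ follows from the identity $\pa_Y^2 K[F]=F/U_s+(U_s''/U_s)K[F]\in L^2$ (since $F/U_s=U_s'e^{-\alpha Y}\in L^2$ and $K[F]$ is bounded), and that of $\varphi_2$ from \eqref{est.prop.Ray.3'}.
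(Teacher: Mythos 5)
Your construction coincides with the paper's: you take $\varphi_0 = U_s e^{-\alpha Y}$, absorb the residual $2\alpha U_sU_s'e^{-\alpha Y}$ at leading order with $\varphi_1 = 2\alpha K[U_sU_s'e^{-\alpha Y}]$, and then solve $Ray[\varphi_2]=\alpha^2 U_s\varphi_1$ by Proposition~\ref{prop.Ray}(ii), exactly as in the paper, with all intermediate bounds consistent. Your closing hesitation is unnecessary: from the pointwise decay $|K[U_sU_s'e^{-\alpha Y}](Y)|\leq C(1+Y)^{-1}e^{-\alpha Y}$ one gets $|\int_0^\infty U_s K[U_sU_s'e^{-\alpha Y}]\,dY|=O(|\log\alpha|)$, so the singular contribution $\alpha^{-1}|\int_0^\infty f\,dY|\leq C\alpha^2|\log\alpha|$ is already $O(\alpha^{3/2})$ uniformly on $(0,1]$, and no extra average-cancelling corrector is required.
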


\begin{proof} {We look for a  solution $\varphi$ to $Ray[\varphi]=0$ of the form $\varphi= U_s e^{-\alpha Y} +  \varphi_1 + \varphi_2$. We should have $Ray[\varphi_1+\varphi_2] = 2 \alpha U_s U_s' e^{-\alpha Y}$.}  

{We first consider the problem 
\begin{align}\label{proof.prop.slow.Ray.1}
U_s \pa_Y^2 \varphi_1 - U_s''\varphi_1 =  2 \alpha U_s U_s' e^{-\alpha Y}\,, \qquad \lim_{Y\rightarrow \infty} \varphi_1 =0\,.
\end{align}}
Its solution is given by 
\begin{align}\label{proof.prop.slow.Ray.2}
\varphi_1 = 2\alpha K [U_s U_s' e^{-\alpha Y}]\,,
\end{align}
where $K$ is defined by \eqref{def.op.K}. From Proposition \ref{prop.pre.op} we have 
\begin{align*}
\| \varphi_1 \| = 2 \alpha \| L\big [ \sigma [U_s U_s' e^{-\alpha Y}] \big ] \|& \leq C \alpha \| (1+Y) \sigma [U_s U_s' e^{-\alpha Y}] \| \\
& \leq C \alpha (\|  (1+Y)^2 U_s U_s' e^{-\alpha Y} \|    \leq C \alpha\,.
\end{align*}
Here we have used the decay condition  $|U_s'(Y)|\leq C (1+Y)^{-3}$.
Similarly, we have 
\begin{align*}
\| \pa_Y \varphi_1 \| & \leq C \alpha \big ( \| \sigma [U_s U_s' e^{-\alpha Y}]  \|_{L^1} + \| \sigma [U_s U_s' e^{-\alpha Y}] \| + \| \pa_Y \sigma [U_s U_s' e^{-\alpha Y}] \| \big ) \\
& \leq C \alpha \big ( \| Y U_s U_s' e^{-\alpha Y} \|_{L^1} + \| Y U_s U_s' e^{-\alpha Y} \|  + \| U_s U_s' e^{-\alpha Y} \| \big ) \\
& \leq C \alpha \,.
\end{align*}
Estimate \eqref{est.prop.slow.Ray.2} is proved. 

{Eventually, we introduce the solution $\varphi_2$ of 
\begin{align*}
Ray[\varphi_2] =\alpha^2 U_s \varphi_1 = 2 \alpha^3 U_s K [U_s U_s' e^{-\alpha Y}], \quad \varphi_2\vert_{Y=0} = 0, 
\end{align*}
that can be estimated using case ii) of Proposition \ref{prop.Ray}. { We use again Proposition \ref{prop.pre.op} and  the bound  $|U_s'(Y)|\leq C (1+Y)^{-3}$ to compute:
\begin{align*} 
\|\pa_Y \varphi_2 \| + \alpha \| \varphi_2\| & \le C \alpha^3  \| (1+Y)^2 K [U_s U_s' e^{-\alpha Y}] \| + C \alpha^2 | \int_0^{+\infty}  K [U_s U_s' e^{-\alpha Y}](Y) \, d Y | \\
& \le C \alpha^3  \| (1+Y)^4 U_s U_s' e^{-\alpha Y} \| + C \alpha^2  \int_0^{+\infty} (1+Y)^{-1} e^{-\alpha Y} \, d Y  \\
& \le C \alpha^3  \| (1+Y) e^{-\alpha Y} \| + C \alpha^2 |\ln \alpha |  \le C \alpha^{3/2} + \alpha^2 |\ln \alpha|\, . 
\end{align*}
Here, note that the bound $|K[U_sU_s' e^{-\alpha Y}] (Y)| \leq C (1+Y)^{-1} e^{-\alpha Y}$ used in the second line is proved by the following observation: 
\begin{align*}
|K[U_sU_s'e^{-\alpha Y}](Y)|  &\leq C U_s e^{-\alpha Y} \int_Y^\infty \frac{1}{U_s^2} \int_{Y_1}^\infty {(1+Y_2)^{-3}}\, d Y_2 \, d Y_1 \\
& \leq C U_s e^{-\alpha Y} \int_Y^\infty \frac{1}{U_s^2 (1+Y_1)^2} \, d Y_1  \leq C (1+Y) e^{-\alpha Y}\,. 
\end{align*}
In the last line we have also used the argument as in the computation of $G_s$ in Proposition \ref{prop.pre} (2) when $Y$ is small. The details are omitted here.}
The proof is complete.}
\end{proof}

\mspace

Let $\varphi=\varphi_0+\varphi_1 + \varphi_2 \in H^1(\R_+)$ be the function obtained in Proposition \ref{prop.slow.Ray},
and set 
\begin{align}
\varphi_{slow,Ray} = \frac{c_E}{\alpha} \varphi\,, \qquad c_E = \frac{\alpha}{\varphi_1 (0)} = \frac{U_s'(0)}{U_E^2}+ O (\alpha)\,, \quad  0<\alpha\leq  1 \,.
\end{align}
As a direct consequence of Proposition \ref{prop.slow.Ray}, we have 
\begin{corollary}\label{cor.prop.slow.Ray} {For any  $0<\alpha\leq 1$}, there exists a function $\varphi_{slow,Ray}\in H^1 (\R_+)$ satisfying 
\begin{align*}
Ray[\varphi_{slow,Ray}] =0\qquad Y>0
\end{align*}
in the sense of distributions and the following properties: $\varphi_{slow,Ray} = \varphi_{sRay,0} + \varphi_{sRay,1} + \varphi_{sRay,2}$, where 
\begin{align}
& \varphi_{sRay,0} =\frac{c_E}{\alpha} U_s e^{-\alpha Y}\,, \qquad \varphi_{sRay,1} (0) = 1\,, \label{est.cor.prop.slow.Ray.1}\\
& \|  \pa_Y \varphi_{sRay,1} \|   +  \|  \varphi_{sRay,1} \| \leq C\,, \label{est.cor.prop.slow.Ray.2}\\
& \|  \pa_Y \varphi_{sRay,2} \|  + \alpha \| \varphi_{sRay,2} \|  \leq C \alpha^{1/2}\,. \label{est.cor.prop.slow.Ray.3}
\end{align}
Here $C$ is independent of $\alpha$. In particular, we have 
\begin{align}\label{est.cor.prop.slow.Ray.4}
{\varphi_{slow,Ray} (0) = 1\,.}
\end{align}
If $\frac{U_s''}{U_s}\in L^2 (\R_+)$ in addition, then $\varphi_{sRay,1}$ and $\varphi_{sRay,2}$ belong to $H^2 (\R_+)$.
\end{corollary}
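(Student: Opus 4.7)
The plan is to obtain the Corollary as an essentially direct rescaling of Proposition \ref{prop.slow.Ray}. Set
\begin{equation*}
\varphi_{sRay,j} = \frac{c_E}{\alpha}\, \varphi_j, \qquad j=0,1,2,
\end{equation*}
so that $\varphi_{slow,Ray} = \varphi_{sRay,0}+\varphi_{sRay,1}+\varphi_{sRay,2}$ solves $Ray[\varphi_{slow,Ray}] = 0$ by linearity. The definition $\varphi_{sRay,0} = \frac{c_E}{\alpha} U_s e^{-\alpha Y}$ is then immediate, and \eqref{est.cor.prop.slow.Ray.1} for $\varphi_{sRay,1}(0)$ follows by construction: $\varphi_{sRay,1}(0) = (c_E/\alpha)\varphi_1(0) = 1$ precisely because we chose $c_E = \alpha/\varphi_1(0)$.

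Next I would verify the asymptotic for $c_E$. From \eqref{est.prop.slow.Ray.1} we have $\varphi_1(0) = \frac{U_E^2}{U_s'(0)}\alpha (1+O(\alpha))$, so for $\alpha$ small
\begin{equation*}
c_E \: = \: \frac{\alpha}{\varphi_1(0)} \: = \: \frac{U_s'(0)}{U_E^2} + O(\alpha),
\end{equation*}
which in particular gives $c_E = O(1)$ uniformly in $\alpha\in(0,1]$. Hence the rescaling factor $c_E/\alpha$ is of order $1/\alpha$, and multiplying \eqref{est.prop.slow.Ray.2} and \eqref{est.prop.slow.Ray.3} by $c_E/\alpha$ yields the bounds \eqref{est.cor.prop.slow.Ray.2} and \eqref{est.cor.prop.slow.Ray.3}: the $O(\alpha)$ bound for $\varphi_1$ becomes $O(1)$, and the $O(\alpha^{3/2})$ bound for $\varphi_2$ becomes $O(\alpha^{1/2})$.

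Finally, \eqref{est.cor.prop.slow.Ray.4} follows by checking the boundary values of each piece at $Y=0$. One has $\varphi_{sRay,0}(0) = \frac{c_E}{\alpha} U_s(0) = 0$ in view of the assumption $U_s(0)=0$, and $\varphi_{sRay,2}(0) = 0$ since Proposition \ref{prop.slow.Ray} constructs $\varphi_2$ via part (ii) of Proposition \ref{prop.Ray} with homogeneous Dirichlet condition at $Y=0$. Combined with $\varphi_{sRay,1}(0)=1$ this gives $\varphi_{slow,Ray}(0)=1$. The $H^2$ regularity claim when $U_s''/U_s\in L^2$ is inherited from the corresponding statement in Proposition \ref{prop.slow.Ray}. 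There is no real obstacle here: the only mild point is the $O(\alpha)$ expansion of $c_E$, which is what ensures the rescaling factor remains uniformly bounded and converts the $O(\alpha^{3/2})$ estimate into the sharp $O(\alpha^{1/2})$ estimate needed later in the slow-mode construction.
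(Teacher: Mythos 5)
Your proposal matches the paper's argument exactly: the paper defines $\varphi_{slow,Ray} = \frac{c_E}{\alpha}\varphi$ with $c_E = \alpha/\varphi_1(0)$ and states that the Corollary is a direct consequence of Proposition \ref{prop.slow.Ray}, which is precisely the rescaling you carry out. Your verification of the individual points (the $O(\alpha)$ expansion of $c_E$ from \eqref{est.prop.slow.Ray.1}, the conversion of the $\alpha$ and $\alpha^{3/2}$ bounds into $1$ and $\alpha^{1/2}$, and the boundary-value check using $U_s(0)=0$ and $\varphi_2(0)=0$) is correct and complete.
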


\mspace

When {$\alpha \geq 1$} we have 
\begin{proposition}\label{prop.slow.Ray'} {If $\alpha \geq 1$} then there exists a function $\varphi_{slow,Ray}\in H^1 (\R_+)$ satisfying 
\begin{align*}
Ray[\varphi_{slow,Ray}] =0\qquad Y>0\,, \qquad  \varphi_{slow,Ray}|_{Y=0} = 1\,,
\end{align*}
and the following properties:  $\varphi_{slow,Ray} = e^{-\alpha Y} + \tilde \varphi_{slow,Ray}$ with $\tilde \varphi_{slow,Ray}\in H^1_0 (\R_+)$, where
\begin{align}\label{est.prop.slow.Ray'.1} 
\| \pa_Y \tilde \varphi_{slow,Ray} \| + \alpha \| \tilde \varphi_{slow,Ray} \| \leq  C \min \{ 1, \alpha^{-\frac12} \}\,.
\end{align}
{Here $C$ is independent of $\alpha \ge 1$.}
If $\frac{U_s''}{U_s}\in L^2 (\R_+)$ in addition, then $\tilde \varphi_{slow,Ray}$ belongs to $H^2 (\R_+)$.
\end{proposition}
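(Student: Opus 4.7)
The plan is to use a splitting strategy analogous to Proposition \ref{prop.slow.Ray}: take the explicit exponential $e^{-\alpha Y}$ as an Ansatz that satisfies the inhomogeneous Dirichlet condition $e^{-\alpha Y}|_{Y=0}=1$ and is annihilated by the constant-coefficient principal part $\pa_Y^2 - \alpha^2$, and then correct it by solving a Rayleigh equation with the residual as source. In the present regime $\alpha \geq 1$ the exponential is strongly concentrated near the boundary, which will make the source small in a sharp way.

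First, write $\varphi_{slow,Ray} = e^{-\alpha Y} + \tilde\varphi_{slow,Ray}$. Using $(\pa_Y^2 - \alpha^2) e^{-\alpha Y} = 0$, we find $Ray[e^{-\alpha Y}] = -U_s'' e^{-\alpha Y}$, so $\tilde\varphi_{slow,Ray}$ must solve
\begin{equation*}
Ray[\tilde\varphi_{slow,Ray}] = U_s'' e^{-\alpha Y}, \qquad \tilde\varphi_{slow,Ray}|_{Y=0} = 0.
\end{equation*}
Set $f := U_s'' e^{-\alpha Y}$. Although $f/U_s$ need not be in $L^2(\R_+)$ (since $U_s''/U_s$ may behave like $1/Y$ near the origin), $f$ itself certainly belongs to $L^2(\R_+)$, so Remark \ref{rem.prop.Ray} (1) applied in case (i) yields a weak $H^1_0$ solution with the bound $\|\pa_Y \tilde\varphi_{slow,Ray}\| + \alpha \|\tilde\varphi_{slow,Ray}\| \leq C \|Y f /U_s\|$. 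Using the $L^\infty$ bound $\|Y U_s''/U_s\|_{L^\infty} < \infty$ from Proposition \ref{prop.pre} (1) together with the pointwise factor $e^{-\alpha Y}$, we estimate
\begin{equation*}
\|Y f/U_s\| \leq \Bigl\|\frac{Y U_s''}{U_s}\Bigr\|_{L^\infty} \|e^{-\alpha Y}\|_{L^2_Y} \leq C \alpha^{-1/2},
\end{equation*}
which gives \eqref{est.prop.slow.Ray'.1}, since $\min\{1,\alpha^{-1/2}\} = \alpha^{-1/2}$ when $\alpha \geq 1$.

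Finally, for the $H^2$ regularity claim under the extra hypothesis $U_s''/U_s \in L^2(\R_+)$, rewrite the equation as
\begin{equation*}
(\pa_Y^2 - \alpha^2) \tilde\varphi_{slow,Ray} = \frac{U_s''}{U_s}\bigl(\tilde\varphi_{slow,Ray} + e^{-\alpha Y}\bigr).
\end{equation*}
Since $\tilde\varphi_{slow,Ray} \in H^1_0(\R_+) \hookrightarrow L^\infty(\R_+)$ and $e^{-\alpha Y}$ is bounded, the right-hand side belongs to $L^2(\R_+)$ as soon as $U_s''/U_s \in L^2(\R_+)$, and standard elliptic regularity then promotes $\tilde\varphi_{slow,Ray}$ to $H^2(\R_+)$.

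No serious obstacle is anticipated: the regime $\alpha \geq 1$ is the easy one because the diffusive boundary scale $1/\alpha$ is small and $e^{-\alpha Y}$ is an almost exact solution, leaving only the low-order residual $-U_s'' e^{-\alpha Y}$ to absorb. The only technical care needed is that $f/U_s$ may fail to be in $L^2$, which is why one appeals to the weaker version of Proposition \ref{prop.Ray} stated in Remark \ref{rem.prop.Ray} (1) rather than to the full statement.
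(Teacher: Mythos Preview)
Your proof is correct and follows essentially the same approach as the paper: write $\varphi_{slow,Ray} = e^{-\alpha Y} + \tilde\varphi_{slow,Ray}$, solve $Ray[\tilde\varphi_{slow,Ray}] = U_s'' e^{-\alpha Y}$ with zero Dirichlet data via Proposition~\ref{prop.Ray} (in its weak form from Remark~\ref{rem.prop.Ray}), and bound the source using $\|Y U_s''/U_s\|_{L^\infty}<\infty$ and $\|e^{-\alpha Y}\|\le C\alpha^{-1/2}$. Your treatment is in fact slightly more careful than the paper's in explicitly flagging that $f/U_s\notin L^2(\R_+)$ in general and hence appealing to the weak-solution version of the Rayleigh estimate.
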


\begin{proof} The function $\tilde \varphi_{slow,Ray}$ is constructed as the weak solution to 
\begin{align*}
Ray [\tilde \varphi_{slow,Ray}] = U_s'' e^{-\alpha Y}\,, \qquad \tilde \varphi_{slow,Ray}|_{Y=0} =0\,.
\end{align*}
Proposition \ref{prop.Ray} shows that 
\begin{align*}
\| \pa_Y \tilde \varphi_{slow,Ray} \| + \alpha \| \tilde \varphi_{slow,Ray} \| \leq C {\| (1+Y) U_s'' e^{-\alpha Y} \| }\leq C \min \{1, \alpha^{-\frac12} \}\,.
\end{align*}
From $Ray = U_s (\pa_Y^2-\alpha^2 ) - U_s''$ it is straightforward that $\tilde \varphi_{slow,Ray}$ belongs to $H^2 (\R_+)$ if $\frac{U_s''}{U_s}\in L^2(\R_+)$. The proof is complete. 
\end{proof}

\section{Airy equation}\label{sec.Airy}

Set $Airy[\psi] = U_s \psi + i \eps (\pa_Y^2-\alpha^2) \psi$ with $\eps = 1/\tilde n$. 
In this section we consider the Airy equation
\begin{align}\label{eq.Airy}
\begin{cases}
& Airy[\psi] =  \eps f \,, \qquad Y>0\,,\\
& \psi|_{Y=0} = 0\,.
\end{cases}
\end{align}

\begin{proposition}\label{prop.Airy}
Let $f\in L^2 (\R_+)$. Then there exists a unique solution $\psi \in H^2(\R_+)\cap H^1_0 (\R_+)$ to \eqref{eq.Airy} such that 
\begin{align}
\| U_s \psi \| + \eps^\frac16 \| \sqrt{U_s} \psi \| + \eps^\frac13 \| \psi \| + \eps^\frac23 \big (\| \pa_Y \psi \| + \alpha \| \psi \|  \big ) + \eps \| (\pa_Y^2 -\alpha^2 ) \psi \| & \leq C \eps \| f\|\,,\label{est.prop.Airy.0}
\end{align}
and also 
\begin{align}
\| U_s Y \psi \| \leq C \eps \|Y f \| + C \eps^\frac43 \| f\|  \label{est.prop.Airy.-2}
\end{align}
if $(1+Y) f\in L^2 (\R_+)$ in addition.
Moreover, if $f$ is replaced by $\pa_Y f$ or $\frac{f}{Y}$, then 
\begin{align}
\eps^\frac12 \| \sqrt{U_s} \psi \| + \eps^\frac23 \|\psi \| + \eps \big ( \| \pa_Y \psi \| + \alpha \| \psi \|  \big ) & \leq C \eps \| f\|\,. \label{est.prop.Airy.4}
\end{align}
In the case when $f$ is replaced by $\frac{f}{Y}$ we also have 
\begin{align}
\| U_s \psi \| \leq  C\eps^\frac23 \| f\|\,.\label{est.prop.Airy.6}
\end{align}
\end{proposition}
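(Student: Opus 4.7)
The strategy is to obtain everything by energy estimates, combined with the interpolation inequality \eqref{proof.prop.Airy.5} to bootstrap the bound on $\|\psi\|$. Existence and uniqueness in $H^2(\R_+) \cap H^1_0(\R_+)$ follows from Lax--Milgram applied to the sesquilinear form associated with $Airy$: since $U_s>0$ and $\eps \ne 0$, the numerical range of $Airy$ lies in one quadrant of the complex plane (real part $\|\sqrt{U_s}\psi\|^2 \ge 0$ and imaginary part $-\eps(\|\pa_Y\psi\|^2+\alpha^2\|\psi\|^2)$ of definite sign), so the equation is uniquely solvable and $\psi$ inherits $H^2$ regularity from the equation itself.

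The first and decisive step is to pair the equation with $\psi$. Taking real and imaginary parts of $\langle Airy[\psi],\psi\rangle=\eps\langle f,\psi\rangle$ gives
\begin{align*}
\|\sqrt{U_s}\psi\|^2 \le \eps\|f\|\|\psi\|, \qquad \|\pa_Y\psi\|^2+\alpha^2\|\psi\|^2 \le \|f\|\|\psi\|.
\end{align*}
Plugging these into \eqref{proof.prop.Airy.5} yields $\|\psi\|^2 \le C(\eps\|f\|\|\psi\|)^{2/3}(\|f\|\|\psi\|)^{1/3} + C\eps\|f\|\|\psi\|$, whence $\|\psi\| \le C\eps^{2/3}\|f\|$. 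Substituting back gives $\|\sqrt{U_s}\psi\| \le C\eps^{5/6}\|f\|$ and $\|\pa_Y\psi\|+\alpha\|\psi\| \le C\eps^{1/3}\|f\|$, which account for all but the first term of \eqref{est.prop.Airy.0}. For $\|U_s\psi\|$ I would pair with $U_s\psi$: the real part of $\langle Airy[\psi], U_s\psi\rangle$ equals $\|U_s\psi\|^2$ minus a term $\eps \,\Im\!\int U_s'\pa_Y\psi\,\bar\psi\,dY$ arising from integration by parts. Because $U_s'\in L^\infty$, this error is bounded by $\eps\|\pa_Y\psi\|\|\psi\| \le C\eps^{2}\|f\|^2$, and $\eps|\langle f,U_s\psi\rangle|\le \eps\|f\|\|U_s\psi\|$ absorbs into the left side, yielding $\|U_s\psi\|\le C\eps\|f\|$. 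Finally, $\eps\|(\pa_Y^2-\alpha^2)\psi\|\le \|U_s\psi\|+\eps\|f\|\le C\eps\|f\|$ follows directly from the equation. For \eqref{est.prop.Airy.-2} the trick is to observe that $Airy[Y\psi] = \eps Yf + 2i\eps\pa_Y\psi$ with $Y\psi|_{Y=0}=0$, so applying the bound on $\|U_s\cdot\|$ already proved to $Y\psi$ gives $\|U_s Y\psi\| \le C\eps\|Yf\| + C\eps\|\pa_Y\psi\| \le C\eps\|Yf\|+C\eps^{4/3}\|f\|$.

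For the variants with $\pa_Y f$ and $f/Y$ as right-hand sides, I would repeat the two basic pairings, now integrating by parts (for $\pa_Y f$) or invoking Hardy's inequality $\|\psi/Y\|\le C\|\pa_Y\psi\|$ (for $f/Y$; valid since $\psi|_{Y=0}=0$) on the source pairing. In both cases $|\eps\langle f_{\mathrm{source}},\psi\rangle|$ is controlled by $\eps\|f\|\|\pa_Y\psi\|$, so the real and imaginary parts give $\|\sqrt{U_s}\psi\|^2 \le C\eps\|f\|\|\pa_Y\psi\|$ and $\|\pa_Y\psi\|^2+\alpha^2\|\psi\|^2\le C\|f\|\|\pa_Y\psi\|$. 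Hence $\|\pa_Y\psi\|+\alpha\|\psi\|\le C\|f\|$ and $\|\sqrt{U_s}\psi\|\le C\eps^{1/2}\|f\|$, and \eqref{proof.prop.Airy.5} upgrades this to $\|\psi\|\le C\eps^{1/3}\|f\|$, giving \eqref{est.prop.Airy.4}. For \eqref{est.prop.Airy.6}, pair with $U_s\psi$: since $U_s/Y\in L^\infty$ (as $U_s\sim U_s'(0)Y$ near $0$ and $U_s$ is bounded), $|\langle f/Y, U_s\psi\rangle|\le C\|f\|\|\psi\|\le C\eps^{1/3}\|f\|^2$, and the $U_s'$-commutator is handled as before, yielding $\|U_s\psi\|^2 \le C\eps^{4/3}\|f\|^2$.

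The main obstacle --- or at least the only non-routine step --- is the bootstrap from the two basic energy identities to a sharp bound on $\|\psi\|$ without loss in $\eps$; this is exactly what the interpolation \eqref{proof.prop.Airy.5} is designed to deliver, so the whole proof hinges on having that inequality in hand. The $U_s'$-error terms from testing against $U_s\psi$ could in principle be dangerous, but fortunately they are cubic in small quantities and fit well within the target estimates.
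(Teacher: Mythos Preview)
Your proposal is correct and follows essentially the same path as the paper: the same pairings with $\psi$ and $U_s\psi$, the same use of the interpolation inequality \eqref{proof.prop.Airy.5} to close on $\|\psi\|$, the same commutator identity $Airy[Y\psi]=\eps Yf+2i\eps\pa_Y\psi$ for the weighted bound, and the same integration-by-parts/Hardy treatment for the $\pa_Y f$ and $f/Y$ variants. The only cosmetic differences are that the paper obtains the $H^2$ bound via a separate pairing with $(\pa_Y^2-\alpha^2)\psi$ (your direct reading from the equation is slightly cleaner) and establishes existence by a continuity argument in a spectral parameter rather than Lax--Milgram.
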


\begin{remark}{\rm From the proof one can check that the unique existence of the weak solution in $H^1_0 (\R_+)$ is valid even when $f$ is replaced by $\pa_Y f$ or $\frac{f}{Y}$.
}
\end{remark}

\begin{proof} We focus on the a priori estimates. We first take the inner product with $\psi$ in the equation $Airy[\psi]=\eps f$, and then the real part and the imaginary part respectively give
\begin{align}
\| \sqrt{U_s} \psi \|^2 & = \eps \Re \langle f, \psi\rangle\,,\label{proof.prop.Airy.1}\\
\| \pa_Y \psi \|^2 + \alpha^2 \| \psi \|^2 & = -  \Im \langle f, \psi \rangle\,.\label{proof.prop.Airy.2}
\end{align}
Similarly, we take the inner product with $(\pa_Y^2-\alpha^2)\psi$ in the equation $Airy[\psi]=\eps f$, and then the imaginary part lead to 
\begin{align}
\eps \| (\pa_Y^2 - \alpha^2) \psi \|^2 & = \Im \langle U_s' \psi,\pa_Y \psi \rangle + \eps \langle f, (\pa_Y^2-\alpha^2) \psi \rangle\,.\label{proof.prop.Airy.4}
\end{align}
To obtain the estimate of $\|\psi\|$, we apply the interpolation inequality \eqref{proof.prop.Airy.5} together with \eqref{proof.prop.Airy.1}-\eqref{proof.prop.Airy.2}. They imply
\begin{align*}
\| \psi \|^2 & \leq C \| \sqrt{U_s} \psi \|^\frac43 \| \pa_Y \psi \|^\frac23  + C \| \sqrt{U_s} \psi \|^2 \\
& \leq C (\eps \Re \langle f, \psi\rangle )^\frac23 (|\Im \langle f, \psi\rangle|)^\frac13 + C \eps \Re \langle f, \psi \rangle\\
& \leq C \eps^\frac23 \| f\| \| \psi \| + \eps \| f\| \| \psi \|\\
& \leq C \eps^\frac43 \| f\|^2\,.
\end{align*}
Here $C$ is a universal constant. This proves the estimate of $\|\psi\|$.
The $H^1$ estimate of $\psi$ then follows from the estimate of $\|\psi\|$ and \eqref{proof.prop.Airy.2}.
The $H^2$ estimate easily follows from \eqref{proof.prop.Airy.4} and the $H^1$ estimate of $\psi$, while the estimate $\| \sqrt{U_s} \psi \|\leq C \eps^\frac56 \| f\|$ is obtained from the estimate of  $\|\psi\|$ and \eqref{proof.prop.Airy.1}. The details are omitted here.
Next we take the inner product with $U_s \psi$ in the equation $Airy [\psi] = \eps f$ and take the real part, which gives 
\begin{align*}
\| U_s\psi \|^2 + \eps \Im \langle \pa_Y \psi, U_s' \psi \rangle = \eps \Re \langle f, U_s \psi \rangle\,.
\end{align*}
Thus we have 
\begin{align*}
\| U_s \psi \|^2 \leq \eps \| U_s' \|_{L^\infty}  \| \pa_Y \psi \| \| \psi \| + \eps \| f\| \| U_s \psi \| \leq C \eps^2 \| f \|^2\,.
\end{align*}
Hence the estimate {of} $\|U_s \psi \|$ holds. When $(1+Y) f\in L^2(\R_+)$ it is not difficult to show that $(1+Y)\psi \in H^2 (\R_+) \cap H^1_0(\R_+)$. Indeed, we first take the inner product with $Y^2\chi_R^2 \psi$ to $Airy[\psi]=\eps f$, where $\chi_R$ is a smooth cut-off such that $\chi_R=1$ for $0\leq Y\leq R$ and $\chi_R=0$ for $Y\geq 2R$ with $\|\pa_Y^k\chi_R\|_{L^\infty} \leq C R^{-k}$. Then taking the limit $R\rightarrow \infty$ verifies $Y\psi, Y \pa_Y\psi\in L^2 (\R_+)$, from which it is also easy to see that $(1+Y)\psi \in H^2 (\R_+)$ by using the elliptic regularity. Now we observe that
\begin{align*}
Airy [Y\psi] = \eps Y f + i 2 \eps \pa_Y \psi \,.
\end{align*}
Thus we have
\begin{align*}
\| U_s Y \psi \|\leq C \eps \| Y f \| + 2 \eps \| \pa_Y \psi \| \leq C \eps \| Y f \| + C \eps^\frac43 \| f\| \,.
\end{align*} 
This proves \eqref{est.prop.Airy.-2}.
Finally, let us consider the case when $f$ is replaced by $\pa_Y f$ or $\frac{f}{Y}$.
We give the proof only for the case $\frac{f}{Y}$, for the argument of the case $\pa_Y f$ is the same by applying the integration by parts in the inner product $\langle \pa_Y f, \psi\rangle = -\langle f, \pa_Y \psi \rangle$.
The energy equalities \eqref{proof.prop.Airy.1}-\eqref{proof.prop.Airy.2} are replaced by 
\begin{align}
\| \sqrt{U_s} \psi \|^2 & = \eps \Re \langle f, \frac{\psi}{Y} \rangle\,,\label{proof.prop.Airy.6}\\
\| \pa_Y \psi \|^2 + \alpha^2 \| \psi \|^2 & =  -\Im \langle f, \frac{\psi}{Y} \rangle\,.\label{proof.prop.Airy.7}
\end{align}
Equality \eqref{proof.prop.Airy.7} gives the $H^1$ estimate by the Hardy inequality $\|\frac{\psi}{Y}\|\leq C \| \pa_Y \psi \|$, and then the estimate of $\|\sqrt{U_s} \psi \|$ follows from \eqref{proof.prop.Airy.6} and the estimate of $\| \pa_Y \psi \|$ by applying the Hardy inequality for the term $\frac{\psi}{Y}$ again. The estimate of  $\|\psi \|$ then follows from the interpolation inequality \eqref{proof.prop.Airy.5} and the estimates of $\|\sqrt{U_s} \psi\|$ and $\|\pa_Y \psi \|$. Estimate \eqref{est.prop.Airy.6} follows as above from the equality 
\begin{align*}
\| U_s \psi \|^2 + \eps \Im \langle \pa_Y \psi, U_s' \psi \rangle = \eps \Re \langle f, \frac{U_s}{Y} \psi \rangle\,,
\end{align*}
and by using the bound $\|\frac{U_s}{Y} \|_{L^\infty}<\infty$.
The details are omitted. The uniqueness of the solution follows from the a priori estimates.

As for the existence, we first consider the problem $Airy[\psi] - i l \psi = f$ for $l>0$. When $l$ is large the operator $Airy - i l$ is clearly invertible in $L^2(\R_+)$, while all of the a priori estimates are valid uniformly in $l>0$ by applying the same argument as above. This implies the existence of the solution for the case $l=0$ by the standard continuity method. The proof is complete. 
\end{proof}

For later use we consider the Airy equation under the Neumann boundary condition{:}
\begin{align}\label{eq.Airy.N}
\begin{cases}
& Airy[\psi] =   \pa_Y f \,, \qquad Y>0\,,\\
& \pa_Y \psi|_{Y=0} = 0\,.
\end{cases}
\end{align}
\begin{proposition}\label{prop.Airy.N}
Let $f\in H^1_0 (\R_+)$. Then there exists a unique solution $\psi \in H^2 (\R_+)$ to \eqref{eq.Airy.N} such that 
\begin{align}
\| U_s \psi \| & \leq \frac{C}{\eps^\frac13} \| f\| + \frac{C}{\eps^\frac23} \| U_s f \| \,, \label{est.prop.Airy.N.0}
\end{align}
and 
\begin{align}
\eps^\frac12 \| \sqrt{U_s} \psi \|  + \eps^\frac23 \| \psi \| + \eps \big ( \| \pa_Y \psi \| + \alpha \| \psi \| \big ) & \leq C \| f\|\,. \label{est.prop.Airy.N.1}
\end{align}
Moreover, if $(1+Y)^2 f\in H^1 (\R_+)$ in addition, then 
\begin{align}
\| Y \psi \| & \leq  \frac{C}{\eps^\frac23} \| Y f\| + \frac{C}{\eps^\frac13} \| f\| \label{est.prop.Airy.N.2}\,, \\
\| Y^2 U_s \psi \| & \leq \frac{C}{\eps^\frac23} \| U_s Y^2 f\| + \frac{C}{\eps^\frac13} \| U_s Y f \| + C \| Y f \| + C \eps^\frac13\| f\|{\,,}   \label{est.prop.Airy.N.3}
\end{align}
and also
\begin{align}
\| \psi \|_{L^1} & \leq \frac{C}{\epsilon^{\frac56}} \| Y f\| + \frac{C}{\eps^{\frac12}} \| f\| \,,\label{est.prop.Airy.N.4}\\
|\int_0^\infty U_s \psi \, d Y| & \leq C \alpha^2 \Big ( \eps^{\frac16} \| Y f \| +  \epsilon^{\frac12} \| f \|  \Big )\,.\label{est.prop.Airy.N.5}
\end{align}
Finally, {when $0 \le \alpha \le 1$,} the function $\sigma[ U_s \psi] (Y) = \int_Y^\infty U_s \psi \, d Y_1$ satisfies 
\begin{align}
\| (1+Y) \sigma [U_s \psi] \|\leq C \| (1+Y)^2 f \|\,.\label{est.prop.Airy.N.6}
\end{align}
\end{proposition}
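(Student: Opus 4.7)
I plan to prove the estimates following the strategy used for Proposition \ref{prop.Airy}: derive a priori estimates by testing the equation $Airy[\psi]=\pa_Y f$ against a hierarchy of multipliers, then obtain existence in $H^2(\R_+)$ by a continuation argument on a spectral parameter (replacing $Airy$ by $Airy-il$ and continuing from large $l$ down to $l=0$), exactly as at the end of the proof of Proposition \ref{prop.Airy}. The novelty relative to the Dirichlet case is that the right-hand side $\pa_Y f$ is not in $L^2$ a priori, so one must systematically integrate the $\pa_Y$ onto the test function; the boundary conditions $\pa_Y\psi|_{Y=0}=0$ and $f|_{Y=0}=0$ (from $f\in H^1_0$) conspire to make this clean. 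Testing with $\psi$ and splitting real/imaginary parts yields $\|\sqrt{U_s}\psi\|^2=-\Re\langle f,\pa_Y\psi\rangle$ and $\eps(\|\pa_Y\psi\|^2+\alpha^2\|\psi\|^2)=\Im\langle f,\pa_Y\psi\rangle$; combined with the interpolation inequality \eqref{proof.prop.Airy.5}, this produces \eqref{est.prop.Airy.N.1}. Testing with $U_s\psi$ (the boundary term at $Y=0$ vanishes thanks to $U_s(0)=0$) and integrating $\pa_Y f$ by parts gives an identity for $\|U_s\psi\|^2$ in terms of $\eps\langle U_s'\pa_Y\psi,\psi\rangle$, $\langle fU_s',\psi\rangle$ and $\langle U_s f,\pa_Y\psi\rangle$; a Young inequality $\eps^{-1}\|U_s f\|\|f\|\le\tfrac12\eps^{-4/3}\|U_s f\|^2+\tfrac12\eps^{-2/3}\|f\|^2$ then yields \eqref{est.prop.Airy.N.0}.

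For the weighted bounds \eqref{est.prop.Airy.N.2} and \eqref{est.prop.Airy.N.3}, the key observation is that $Y\psi$ and $Y^2\psi$ both vanish at $Y=0$ and hence satisfy \emph{Dirichlet} Airy equations. The commutator identities give
\begin{align*}
Airy[Y\psi] & = \pa_Y(Yf) - f + 2i\eps \pa_Y \psi,\\
Airy[Y^2\psi] & = \pa_Y(Y^2f) - 2Yf + 4i\eps Y \pa_Y\psi + 2i\eps \psi.
\end{align*}
Splitting each right-hand side linearly and applying the Dirichlet estimates \eqref{est.prop.Airy.0}, \eqref{est.prop.Airy.-2}, \eqref{est.prop.Airy.4} of Proposition \ref{prop.Airy} piece by piece, while recycling the bounds on $\|\psi\|$ and $\|\pa_Y\psi\|$ from the previous step to absorb the $\eps\pa_Y\psi$ and $\eps\psi$ contributions (and using $\pa_Y(Y\psi)=\psi+Y\pa_Y\psi$ to extract $\|Y\pa_Y\psi\|$ when needed), produces \eqref{est.prop.Airy.N.2} and \eqref{est.prop.Airy.N.3}.

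The remaining three bounds follow by exploiting global integrals of the equation. For \eqref{est.prop.Airy.N.4}, the elementary splitting $\int_0^\infty|\psi|\,dY\le A^{1/2}\|\psi\|+A^{-1/2}\|Y\psi\|$ valid for all $A>0$, combined with a weighted Young inequality applied to $\sqrt{\|f\|\|Yf\|}$ with parameter $\lambda=C\eps^{1/6}$, converts the bounds from the first two steps into the stated form. For \eqref{est.prop.Airy.N.5}, integrating the equation over $\R_+$ makes both $\int\pa_Y^2\psi\,dY$ vanish (Neumann condition) and $\int\pa_Y f\,dY$ vanish (from $f(0)=0$), leaving the clean identity $\int_0^\infty U_s\psi\,dY = i\eps\alpha^2\int_0^\infty \psi\,dY$, from which the $\alpha^2$ prefactor emerges naturally via the $L^1$-bound just proved. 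For \eqref{est.prop.Airy.N.6}, integrating from $Y$ to $\infty$ yields $\sigma[U_s\psi]=-f+i\eps\pa_Y\psi+i\eps\alpha^2\sigma[\psi]$; multiplying by $(1+Y)$, taking $L^2$-norms, and using $\alpha\le 1$ together with Proposition \ref{prop.pre.op} (1) applied to $\sigma[\psi]$ delivers the bound. The main obstacle I anticipate is the bookkeeping in the weighted step: Proposition \ref{prop.Airy} is formulated for a right-hand side of the form $\eps f$, whereas each commutator generates a mixture $\pa_Y(Y^kf)$, $Y^kf$, $\eps Y^k\pa_Y\psi$, $\eps\psi$, each of which must be rescaled into the correct variant of the Dirichlet estimate and then iterated carefully so that the powers of $\eps$ line up with those appearing in \eqref{est.prop.Airy.N.2}--\eqref{est.prop.Airy.N.6}.
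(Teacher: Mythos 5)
Your proposal follows essentially the same route as the paper: test with $\psi$ and $U_s\psi$ (integrating $\pa_Y$ onto the test function), exploit the commutator identities for $Airy[Y\psi]$ and $Airy[Y^2\psi]$ (noting they satisfy Dirichlet conditions), and integrate the equation once and twice to get \eqref{est.prop.Airy.N.5} and \eqref{est.prop.Airy.N.6}. The only cosmetic differences are that the paper uses the interpolation $\|\psi\|_{L^1}\le C\|Y\psi\|^{2/3}\|\psi\|_{L^\infty}^{1/3}$ while you use $\|\psi\|_{L^1}\le C\|\psi\|^{1/2}\|Y\psi\|^{1/2}$ (both close with the same Young step), and for the weighted $\|U_sY^k\psi\|$ bounds the paper recycles its own freshly-proved Neumann estimate \eqref{est.prop.Airy.N.0} on the $\pa_Y(Y^k f)$ pieces rather than relying solely on the Dirichlet estimates of Proposition \ref{prop.Airy} as you suggest; since $Y^k\psi$ vanishes to second order at $Y=0$ for $k\ge 1$, both options are available and your bookkeeping goes through.
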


\begin{proof} As in the proof of the previous proposition{,} we have 
\begin{align}
\|\sqrt{U_s} \psi \|^2 = - \Re \langle f, \pa_Y \psi \rangle{\,,} \label{proof.prop.Airy.N.1}\\
\| \pa_Y \psi \|^2 + \alpha^2 \| \psi \|^2 = \eps^{-1} \Im \langle f, \pa_Y \psi \rangle\,,\label{proof.prop.Airy.N.2}
\end{align}
and also 
\begin{align}
\eps \| (\pa_Y^2-\alpha^2) \psi \|^2 = \Im \langle U_s' \psi, \pa_Y \psi \rangle + \langle f, (\pa_Y^2-\alpha^2) \psi \rangle\,.\label{proof.prop.Airy.N.3}
\end{align}
The interpolation inequality 
\begin{align*}
\| \psi \|^2 \leq C \|\sqrt{U_s} \psi\|^\frac43 \| \pa_Y \psi \|^\frac23 + C \| \sqrt{U_s} \psi \|^2 
\end{align*}
is valid for $\psi \in H^1 (\R_+)$ and thus, {\eqref{proof.prop.Airy.N.1} and \eqref{proof.prop.Airy.N.2} imply} 
\begin{align*}
\| \psi \|^2\leq \frac{C}{\eps^\frac13} \| f\| \|\pa_Y \psi \| +  \| f\| \| \pa_Y \psi \| \leq \frac{C}{\eps^\frac43} \| f\|^2\,.
\end{align*}
Estimate \eqref{est.prop.Airy.N.1} has been proved. 
Next by taking the inner product with $U_s\psi$ in the equation $Airy[\psi]=\pa_Y f$ and by taking the real part, we see
\begin{align*}
\|U_s \psi \|^2 + \eps \Im \langle \pa_Y \psi, U_s' \psi \rangle = - \Re \langle f, \pa_Y (U_s \psi)  \rangle 
\end{align*}
Thus we have 
\begin{align*}
\| U_s \psi \|^2 & \leq \eps \| U_s' \|_{L^\infty} \| \pa_Y \psi \| \| \psi \| + C \| f\| \| \psi \| + \| U_s f \| \|\pa_Y \psi \| \\
& \leq \frac{C}{\eps^\frac23} \| f \|^2 + \frac{C}{\eps^\frac23} \| f\|^2 + \frac{C}{\eps} \|  U_s f \| \| f \|\,.
\end{align*}
Hence \eqref{est.prop.Airy.N.0} holds.
To obtain the weighted estimate we see
\begin{align*}
Airy [Y\psi] = Y \pa_Y f +  2 i \eps  \pa_Y \psi = \pa_Y (Y f) - f + 2 i \eps  \pa_Y \psi \,,
\end{align*}
and we have by applying Proposition \ref{prop.Airy},
\begin{align}
\eps^\frac13 \| \pa_Y (Y \psi ) \| + \| Y \psi \| & \leq \frac{C}{\eps^\frac23} \| Y f \| + \frac{C}{\eps^\frac13} \| f\| + C \eps^{\frac13} \| \psi \| \nonumber \\
& \leq \frac{C}{\eps^\frac23} \| Y f \| + \frac{C}{\eps^\frac13} \| f\| {\,.} \label{proof.prop.Airy.N.5}
\end{align}
{Hence \eqref{est.prop.Airy.N.2} holds.
Moreover, from the computation of the inner product 
\begin{align*}
 \langle Airy[Y\psi], U_s Y \psi \rangle & = \langle Y \pa_Y f + 2i\eps \pa_Y \psi, U_s Y \psi \rangle \\
& = -\langle f, \pa_Y (Y U_s Y \psi) \rangle + 2i \eps \langle \pa_Y \psi, U_s Y \psi \rangle \\
& = -\langle U_s Y f, \pa_Y (Y \psi) \rangle - \langle f, U_s Y\psi\rangle -\langle U_s' Y f, Y \psi \rangle +  2i \eps \langle \pa_Y \psi, U_s Y \psi \rangle 
\end{align*}
and by taking the real part of it, we finally achieve from \eqref{proof.prop.Airy.N.5}, 
\begin{align}\label{proof.prop.Airy.N.6}
\| U_s Y \psi \| \leq \frac{C}{\eps^\frac13} \| Y f\| + \frac{C}{\eps^\frac23} \| U_s Y f\|  + C \| f\| \,. 
\end{align}
The details are omitted here.}
Next we see
\begin{align*}
Airy[Y^2 \psi] = Y^2 \pa_Y f + 2i \eps \psi +4i\eps Y \pa_Y \psi & = \pa_Y (Y^2 f) -2 Y f - 2i \eps \psi +4i\eps \pa_Y (Y \psi ) \,.
\end{align*}
Thus from \eqref{est.prop.Airy.N.0} and Proposition \ref{prop.Airy},
\begin{align}
\| U_s Y^2 \psi \| & \leq \frac{C}{\eps^\frac13} \| Y^2 f\| + \frac{C}{\eps^\frac23} \| U_s Y^2 f \| + C \| Y f\| + C \eps \| \psi \| \nonumber \\
& \quad + C \eps ( \frac{1}{\eps^\frac13} \| Y \psi \| + \frac{1}{\eps^\frac23} \| U_s Y \psi\| ) \nonumber \\
& \leq \frac{C}{\eps^\frac13} \| Y^2 f\| + \frac{C}{\eps^\frac23} \| U_s Y^2  f \|+ C \| Y f\| + C \eps^\frac13 \| f \| \nonumber \\
& \quad + C \eps^\frac23 \| Y \psi \| + C \eps^{\frac13} \| U_s Y \psi\|  \,.\label{proof.prop.Airy.N.4}
\end{align}
Next we have from the interpolation $\| \psi \|_{L^1} \leq C \| Y \psi \|^\frac23 \| \psi \|_{L^\infty}^\frac13 \leq C \| Y \psi \|^\frac23 \| \pa_Y \psi \|^\frac16 \| \psi \|^\frac16$,
\begin{align*}
\| \psi \|_{L^1} & \leq C ( \frac{1}{\eps^\frac23} \| Y f\| + \frac{1}{\eps^\frac13} \| f\| )^\frac23 \frac{1}{\eps^\frac16}\|  f\|^\frac16 \frac{1}{\eps^\frac19} \| f\|^\frac16  \\
& \leq C ( \frac{1}{\eps^\frac23} \| Y f\| + \frac{1}{\eps^\frac13} \| f\| )^\frac23 \frac{1}{\eps^\frac{5}{18}} \| f\|^\frac13\,.
\end{align*}
This implies \eqref{est.prop.Airy.N.4}.
We observe that from the integration by parts,
\begin{align*}
\int_0^\infty U_s \psi \, d Y & = -i\eps \int_0^\infty (\pa_Y^2-\alpha^2) \psi \, d Y + \int_0^\infty \pa_Y f \, d Y \\
&  = i\eps \alpha^2 \int_0^\infty \psi\, d Y\,.
\end{align*}
Then \eqref{est.prop.Airy.N.5} follows from the $L^1$ estimate of $\psi$ in \eqref{est.prop.Airy.N.4}.
{Finally, we observe that $\sigma[U_s \psi] (Y)= \int_Y^\infty U_s \psi \, d Y_1$ satisfies 
\begin{align*}
\sigma [U_s \psi] = i\eps \pa_Y \psi + i\eps \alpha^2 \int_Y^\infty \psi \, d Y_1 - f = i\eps \pa_Y \psi + i\eps \alpha^2 \sigma[\psi] - f \,.
\end{align*}
Thus we have 
\begin{align*}
\| (1+Y) \sigma [U_s \psi] \| & \leq \eps \| (1+Y) \pa_Y \psi\| + \eps \alpha^2 \| (1+Y) \sigma[\psi]\| + \| (1+Y) f\| \,,
\end{align*}
and then,  it follows from Proposition \ref{prop.pre.op} (1) that }
\begin{align*}
\| (1+Y) \sigma [U_s \psi] \| & \leq \eps \| (1+Y) \pa_Y \psi\| + C \eps \alpha^2 \| Y (1+Y)  \psi \| + \| (1+Y) f\| \,.
\end{align*}
Hence, {for $0 \le \alpha \le 1$},  \eqref{est.prop.Airy.N.1}, \eqref{proof.prop.Airy.N.5}, \eqref{proof.prop.Airy.N.5}, and \eqref{proof.prop.Airy.N.4} yield $\| (1+Y) \sigma [U_s \psi]\|\leq C \| (1+Y)^2 f \|$, as desired.
The proof is complete.
\end{proof}

\section{Orr-Sommerfeld equation}\label{sec.OS}

Set $OS[\phi] = Ray[\phi] + i \eps (\pa_Y^2-\alpha^2)^2 \phi$. 
This aim of this section is to solve the Orr-Sommerfeld equation
\begin{align}\label{eq.OS}
\begin{cases}
& OS[\phi] =   f \,, \qquad Y>0\,,\\
& \phi|_{Y=0} = \pa_Y \phi|_{Y=0} = 0\,.
\end{cases}
\end{align}
We assume that $\tilde 1 \sqrt{\nu} \leq \alpha \ll \eps^{-\frac13}$ with $\eps=1/\tilde n$, which means $\tilde 1 \leq \tilde n \ll \nu^{-\frac34}$.  
To simplify the statement we focus on the case when $f$ decays fast enough, though this condition can be relaxed to some extent.
\begin{theorem}\label{thm.OS} There exist positive numbers $\delta_0, \eps_0$ such that if $0<\eps\leq \eps_0$ and $0<\eps^\frac13 \alpha \leq \delta_0$ then for any $f\in L^2 (\R^2_+)$ with $(1+Y)^2 f \in L^2 (\R_+)$ there exists a unique solution $\phi\in H^4 (\R_+)\cap H^2_0 (\R_+)$ to \eqref{eq.OS} satisfying 

\noindent {\rm (i)} when {$\alpha \geq 1$}:
\begin{align}
\| \pa_Y \phi \| + \alpha \| \phi\|  & \leq  C {\| (1+Y) f\|} \,, \label{est.thm.OS.1} \\
\| (\pa_Y^2-\alpha^2)\phi \| & \leq {\frac{C}{\eps^\frac13} \| (1+Y) f \|} \,,\label{est.thm.OS.2}
\end{align}

\

\noindent {\rm (ii)} when {$0<\alpha \leq 1$}:
\begin{align}
\| \pa_Y \phi \| + \alpha \| \phi\|  & \leq  \frac{\alpha + \eps^\frac16}{\alpha+\eps^\frac13}  \Big ( \| (1+Y)^2 f \| + \frac{1}{\alpha} |\int_0^\infty f\, d Y | \Big ) \,, \label{est.thm.OS.3} \\
\| \pa_Y \phi \|_{L^\infty}  & \leq   \frac{C}{\eps^\frac16}  \Big ( \| (1+Y)^2 f \| + \frac{1}{\alpha} |\int_0^\infty f\, d Y | \Big ) \,,  \label{est.thm.OS.4}\\
\| (\pa_Y^2-\alpha^2)\phi \| & \leq   \frac{C}{\eps^\frac13}  \Big ( \| (1+Y)^2 f \| + \frac{1}{\alpha} |\int_0^\infty f\, d Y | \Big ) \,.\label{est.thm.OS.5}
\end{align}

\end{theorem}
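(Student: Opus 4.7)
The plan is to assemble the solution in the form
\[
\phi = \phi_p + c_s(\phi_{slow} - \phi_{fast}),
\]
where $\phi_p$ is a particular solution to $OS[\phi_p] = f$ satisfying only the Dirichlet condition $\phi_p|_{Y=0} = 0$, and $\phi_{slow}, \phi_{fast}$ are the slow and fast homogeneous solutions of $OS = 0$ described in the outline preceding the theorem (both normalized to unit value at $Y=0$). Since $\phi_{slow} - \phi_{fast}$ vanishes at the boundary, the Dirichlet condition is automatically preserved; the constant $c_s$ is then fixed by the Neumann condition $\pa_Y\phi|_{Y=0} = 0$, which is solvable provided the gap $\pa_Y\phi_{slow}|_{Y=0} - \pa_Y\phi_{fast}|_{Y=0}$ is not too small. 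This will be the case because the fast mode carries a boundary layer of scale $\eps^{1/3}$, so that $\pa_Y\phi_{fast}|_{Y=0}$ dominates $\pa_Y\phi_{slow}|_{Y=0}$, which is of inviscid origin. The correction $c_s(\phi_{slow}-\phi_{fast})$ is therefore small, of size proportional to $\eps^{1/3}|\pa_Y\phi_p|_{Y=0}|$, and can be transferred onto the data $f$ via the a priori bounds of the slow and fast mode constructions.

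The construction of $\phi_p$ is the heart of the matter and proceeds by a Rayleigh--Airy iteration built on the identity
\[
OS[\phi] = Airy\!\left[\tfrac{1}{U_s}Ray[\phi]\right] + i\eps(\pa_Y^2 - \alpha^2)\!\left(\tfrac{U_s''}{U_s}\phi\right).
\]
Rewriting this as $Airy\bigl[\tfrac{1}{U_s}Ray[\phi]\bigr] = f - i\eps(\pa_Y^2 - \alpha^2)(U_s''\phi/U_s)$, I would iterate starting from $\phi^{(0)} = 0$: given $\phi^{(k)}$, first solve a Neumann Airy equation for $\psi^{(k)}$ with source $f - i\eps(\pa_Y^2 - \alpha^2)(U_s''\phi^{(k)}/U_s)$, using Proposition \ref{prop.Airy.N}; then solve a Rayleigh equation $Ray[\phi^{(k+1)}] = U_s \psi^{(k)}$ with zero Dirichlet value, using Proposition \ref{prop.Ray}. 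The limit $\phi_p = \lim_k \phi^{(k)}$ then satisfies $OS[\phi_p] = f$ and $\phi_p|_{Y=0} = 0$ by construction.

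The hard part, and the one that drives the whole design of the iteration, is that in the regime $\alpha \ll 1$ the Rayleigh estimates of Proposition \ref{prop.Ray} blow up like $\alpha^{-1}$ in front of the average $|\int_0^\infty (\text{source})\, dY|$ of the source. To prevent the series from diverging, the Airy output $U_s \psi^{(k)}$ that feeds the next Rayleigh step must have nearly zero mean. This is exactly what estimate \eqref{est.prop.Airy.N.5} supplies: for a Neumann Airy problem with divergence-form source $\pa_Y f$, one has $|\int_0^\infty U_s\psi \, dY| \le C\alpha^2(\eps^{1/6}\|Y f\| + \eps^{1/2}\|f\|)$. Combining this $\alpha^2$ gain with the $\alpha^{-1}$ loss from Rayleigh leaves a net factor $\alpha$, which together with the $\eps^{1/3}$ smallness coming from Proposition \ref{prop.Airy} produces a contraction ratio controlled by a small power of $\alpha + \eps^{1/3}\alpha$, uniformly over $\eps^{1/3}\alpha \le \delta_0$ and $\alpha \ge \tilde 1\sqrt{\nu}$. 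To activate \eqref{est.prop.Airy.N.5} at each step, I have to put the Airy source in divergence form, that is, rewrite $i\eps(\pa_Y^2 - \alpha^2)(U_s''\phi^{(k)}/U_s)$ explicitly as $\pa_Y[\,\cdots\,]$ before invoking Proposition \ref{prop.Airy.N}.

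Once convergence is secured, summing the geometric series yields the announced bounds on $\phi_p$, and the slow/fast correction transfers them to $\phi$. In the regime $\alpha \ge 1$ the singular behavior of Rayleigh is absent, and the bounds \eqref{est.prop.Ray.1}--\eqref{est.prop.Ray.2} applied throughout the iteration give the cleaner estimates \eqref{est.thm.OS.1}--\eqref{est.thm.OS.2}. For $0 < \alpha \le 1$, the term $\alpha^{-1}|\int_0^\infty f\, dY|$ appearing in \eqref{est.thm.OS.3}--\eqref{est.thm.OS.5} is the unavoidable non-zero-mean contribution of the original source, entering through the very first Rayleigh step applied to $U_s\psi^{(0)}$ with $Airy[\psi^{(0)}] = f$; the prefactor $(\alpha + \eps^{1/6})/(\alpha + \eps^{1/3})$ in \eqref{est.thm.OS.3} then records the competition between the Rayleigh length scale $\alpha^{-1}$ and the Airy length scale $\eps^{-1/3}$, and the pointwise bound \eqref{est.thm.OS.4} comes from Sobolev embedding applied to the slow/fast part, whose $\pa_Y$ is concentrated on the scale $\eps^{1/3}$. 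Uniqueness follows from applying the same a priori estimates to the difference of two $H^4$-solutions with vanishing data.
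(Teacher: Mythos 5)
Your overall architecture agrees with the paper's: a particular solution of $OS[\phi]=f$ with Dirichlet trace zero plus a slow/fast correction $c_s(\phi_{slow}-\phi_{fast})$ that kills the Neumann trace, with the $\alpha^{-1}$ singularity of Proposition~\ref{prop.Ray} compensated by the $\alpha^2$ gain in~\eqref{est.prop.Airy.N.5}. So the right ingredients are all present. The slow/fast part and the determinant estimate are essentially the paper's Subsections~\ref{subsec.slow}--\ref{subsec.proof.thm.OS}.

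The gap is in the iteration, precisely at the step you single out as critical. Estimate~\eqref{est.prop.Airy.N.5} is obtained by integrating $Airy[\psi]=\pa_Y g$ over $(0,\infty)$: the boundary terms from $\pa_Y\psi$ and from $g$ both drop out, the first by the Neumann condition and the second because Proposition~\ref{prop.Airy.N} assumes $g\in H^1_0(\R_+)$, and only then does one get $\int_0^\infty U_s\psi\,dY=i\eps\alpha^2\int_0^\infty\psi\,dY$. Your single Neumann--Airy step feeds in the source $-i\eps(\pa_Y^2-\alpha^2)\bigl(U_s''\phi^{(k)}/U_s\bigr)$; rewriting it as $\pa_Y g$ gives $g=-i\eps\pa_Y\bigl(U_s''\phi^{(k)}/U_s\bigr)-i\eps\alpha^2\sigma\bigl[U_s''\phi^{(k)}/U_s\bigr]$, and $g(0)\ne 0$ in general even though $\phi^{(k)}|_{Y=0}=0$ (since $U_s''\phi^{(k)}/U_s$ has a finite, nonzero boundary trace and derivative). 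Integrating the Airy equation you then get $\int_0^\infty U_s\psi^{(k)}\,dY=-g(0)+i\eps\alpha^2\int_0^\infty\psi^{(k)}\,dY=O(\eps)$ rather than $O(\eps\alpha^2)$, and the following Rayleigh step loses $\alpha^{-1}$ against this; the per-round gain is only $O(\eps/\alpha)$, which is not $\ll 1$ uniformly over the range $\alpha\ge\tilde1\sqrt\nu$ covered by the theorem. The paper avoids this by inserting an extra Dirichlet Airy solve in each round (Proposition~\ref{prop.mOS}): first $Airy[\psi_k]=-i\eps\frac{U_s''}{U_s}\varphi_k$ with $\psi_k|_{Y=0}=0$, and only then the Neumann Airy against $2\pa_Y(U_s'\psi_k)$, whose primitive $U_s'\psi_k$ genuinely lies in $H^1_0$. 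That Dirichlet step is not a convenience; it is the structural device that manufactures a boundary-vanishing primitive, and without it (or some substitute producing the same effect) the series does not contract for $\alpha\ll 1$. (A minor structural remark: the paper builds $\Phi_{slip}$ as a convergent series $\varphi_1+\psi_1+\sum_{k\ge 2}(\varphi_k+\psi_k)$ carrying the auxiliary boundary condition $\pa_Y H_\alpha\Phi|_{Y=0}=0$, rather than as the fixed point of a map; that difference by itself would be cosmetic if the contraction worked.)
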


\begin{remark}{\rm (i) When $\alpha$ is small, the singular factor $\alpha^{-1}$ in front of  $|\int_0^\infty f \, d Y|$ is due to the Rayleigh equation and Proposition \ref{prop.Ray}. It can not be dropped in general.

\noindent (ii) Note that the $L^\infty$ estimate \eqref{est.thm.OS.4} is not a consequence of the interpolation between \eqref{est.thm.OS.3} and \eqref{est.thm.OS.5}.
{The loss of the factor $\eps^{-\frac16}$ in \eqref{est.thm.OS.3} appearing in the case $0<\alpha\ll 1$ comes from the slow mode of the boundary corrector. However, we can recover the estimates of $\|\pa_Y \phi\|_{L^\infty}$ and $\| (\pa_Y^2-\alpha^2) \phi\|$ as in \eqref{est.thm.OS.4} and \eqref{est.thm.OS.5}, that are considered to be optimal  in view of scaling.}}
\end{remark}

\subsection{Rayleigh-Airy iteration}\label{subsec.iteration}
In this subsection we consider the modified Orr-Sommerfeld equation
\begin{align}\label{eq.mOS}
\begin{cases}
& OS[\Phi] =   f \,, \qquad Y>0\,,\\
& \Phi|_{Y=0} = \pa_Y H_\alpha  \Phi |_{Y=0} = 0 {\,,}
\end{cases}
\end{align} 
{where the self-adjoint operator $H_\alpha = \pa_Y^2 -\alpha^2$ is realized in $L^2(\R_+)$ with the domain $H^2(\R_+)\cap H^1_0 (\R_+)$. 
That is, the original boundary condition on $\pa_Y\Phi|_{Y=0}=0$ is replaced by $\pa_Y H_\alpha  \Phi|_{Y=0} = 0$. 
To be rigorous our aim is to construct the solution $\Phi\in H^2(\R_+) \cap H^1_0 (\R_+)$ to the problem in the weak formulation  
\begin{align}\label{eq.weak.mOS}
\langle U_s H_\alpha \Phi - U_s'' \Phi, q\rangle + i \eps \langle H_\alpha \Phi, (\pa_Y^2-\alpha^2) q\rangle = \langle f, q\rangle\,, \qquad q\in H^2 (\R_+)~{\rm with}~\pa_Y q (0)=0\,.
\end{align}}
To state the main result of this subsection it is convenient to introduce the functions $\varphi_1, \psi_0 \in H^2(\R_+)\cap H^1_0 (\R_+)$, which are respectively the solutions to 
\begin{align*}
Ray[ \varphi_1 ] = f\,, \qquad  Airy [\psi_0] = -i\eps \frac{f}{U_s}\,, \qquad \frac{f}{U_s}\in L^2 (\R_+)\,.
\end{align*}

\begin{proposition}\label{prop.mOS} There exists a positive number $\eps_1$ such that the following statement holds for any $0<\eps\leq \eps_1$ and $\alpha>0$. Let $f/U_s\in L^2 (\R_+)$. Then there exists a solution $\Phi \in H^4 (\R_+)\cap H^1_0 (\R_+)$ to {\eqref{eq.mOS} satisfying the following estimates.} 

\noindent {\rm (i)} when $\alpha \geq 1$,
\begin{align}
\| \pa_Y (\Phi -\varphi_1-\psi_0 ) \| & \leq C \eps^\frac13 \| \pa_Y \varphi_1 \|  + \frac{C}{\eps^\frac13} \big ( \| \psi_0 \| + \frac{1}{\eps^\frac13}  \| U_s \psi_0 \|\big )  \,, \label{est.prop.mOS.1} \\
\alpha \| \Phi -\varphi_1 -\psi_0  \|  & \leq C \eps^\frac13 ( 1+\alpha\eps^\frac13) \| \pa_Y \varphi_1 \|  + C (\alpha \eps^\frac13 +\frac{1}{\eps^\frac13} ) \big ( \| \psi_0 \| + \frac{1}{\eps^\frac13} \| U_s \psi_0 \| \big )\,, \label{est.prop.mOS.2} \\
\| (\pa_Y^2-\alpha^2) \big ( \Phi -\varphi_1-\psi_0 \big ) \| & \leq C \| \pa_Y \varphi_1 \| + \frac{C}{\eps^\frac23} \| \psi_0 \| \,.\label{est.prop.mOS.3} 
\end{align}

\noindent {\rm (ii)} when $0<\alpha\leq 1$,
\begin{align}
\| \pa_Y (\Phi - \varphi_1 -\psi_0) \| & \leq C \eps^\frac13 \| \pa_Y \varphi_1 \| + \frac{C}{\eps^\frac13} \big ( \| \psi_0 \| + \frac{1}{\eps^\frac13}  \| U_s \psi_0 \| \big ) \,, \label{est.prop.mOS.1'} \\
\alpha \| \Phi -\varphi_1-\psi_0 \| & \leq  C \alpha  \eps^\frac13 \| U_s'' \varphi_1 \| + C \alpha \| \psi_0 \| \,,\label{est.prop.mOS.2'} \\
\| (\pa_Y^2-\alpha^2) \big ( \Phi -\varphi_1-\psi_0 \big ) \| & \leq C \| \pa_Y \varphi_1 \|  + \frac{C}{\eps^\frac23}  \| \psi_0 \| \,.\label{est.prop.mOS.3'} 
\end{align}

\end{proposition}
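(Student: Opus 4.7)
The plan is to construct $\Phi$ by the Rayleigh--Airy iteration sketched in the introduction, starting from the natural base approximation $\Phi_0 := \varphi_1 + \psi_0$ and correcting by a convergent series. With $\varphi_1$ and $\psi_0$ as in the statement, a direct computation using the identity $(\pa_Y^2-\alpha^2)\varphi_1 = \frac{1}{U_s}(f + U_s''\varphi_1)$ (which follows from $Ray[\varphi_1]=f$) together with the identity $OS[\phi] = (\pa_Y^2-\alpha^2)\,Airy[\phi] - 2\pa_Y(U_s'\phi)$ yields
\begin{equation*}
OS[\Phi_0] - f \;=\; R_0 \;:=\; i\eps(\pa_Y^2-\alpha^2)\frac{U_s''}{U_s}\varphi_1 \;-\; 2\pa_Y(U_s'\psi_0).
\end{equation*}
Both pieces are strictly lower order than $f$: the first carries an explicit $\eps$ along with the fast-decaying factor $U_s''/U_s$, while the second is of size $\eps$ through the Airy bounds \eqref{est.prop.Airy.0}--\eqref{est.prop.Airy.4} applied to $\psi_0$.

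The iteration proceeds by treating $-R_0$ as a new source $\tilde f$: solve $Ray[\tilde\varphi] = \tilde f$ (Proposition \ref{prop.Ray}) and then $Airy[\tilde\psi] = -i\eps \tilde f/U_s$ (Proposition \ref{prop.Airy}), routing the divergence-form piece $-2\pa_Y(U_s'\psi_0)$ through the Airy side via the $\pa_Y f$ variant in \eqref{est.prop.Airy.4}. Each round gains a small factor (of order $\eps^{1/3}$ coming from the combination of $Ray$ and $Airy$ estimates), so that for $\eps \leq \eps_1$ small enough the series $\Phi = \Phi_0 + \sum_{k\geq 1}(\varphi^{(k)}+\psi^{(k)})$ converges in $H^4(\R_+)$ and solves $OS[\Phi] = f$ in the weak sense \eqref{eq.weak.mOS}. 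The bounds \eqref{est.prop.mOS.1}--\eqref{est.prop.mOS.3'} on $\Phi - \varphi_1 - \psi_0$ then follow by summing the contributions, with the leading correction dominated by the first iterate plus a geometric tail.

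The case $\alpha \geq 1$ is relatively clean because Proposition \ref{prop.Ray}(i) supplies good Rayleigh estimates uniformly. The main obstacle is the regime $0<\alpha\leq 1$, in which the Rayleigh inequalities \eqref{est.prop.Ray.1'}--\eqref{est.prop.Ray.3'} develop a singular factor $\alpha^{-1}$ in front of $\bigl|\int_0^\infty (\text{source})\,dY\bigr|$, as emphasized in Remark \ref{rem.prop.Ray}. This would destroy the iteration unless the Rayleigh source has vanishing integral on $\R_+$ at every stage. Fortunately, the structure of $R_0$ cooperates: the term $-2\pa_Y(U_s'\psi_0)$ is an exact derivative with integral $-2(U_s'\psi_0)\big|_0^\infty=0$ (from $\psi_0(0)=0$ and the decay of $U_s'$), and $i\eps(\pa_Y^2-\alpha^2)\frac{U_s''}{U_s}\varphi_1$ is already a second derivative of a fast-decaying function. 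Preserving this zero-average property through every Rayleigh step is the structural point that closes the iteration for small $\alpha$, and it is also the reason \eqref{est.prop.mOS.2'} involves $\|U_s''\varphi_1\|$ rather than $\|\pa_Y\varphi_1\|$.

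Finally, the boundary conditions. All iterates $\varphi^{(k)},\psi^{(k)}$ lie in $H^2\cap H^1_0(\R_+)$, so the Dirichlet condition $\Phi|_{Y=0}=0$ is preserved by summation; the relaxed condition $\pa_Y H_\alpha\Phi|_{Y=0}=0$ is a \emph{natural} boundary condition arising from the weak formulation \eqref{eq.weak.mOS} when one tests against $q\in H^2$ with $\pa_Y q(0)=0$, so it never needs to be imposed on a single iterate. The principal technical difficulty I anticipate is the careful bookkeeping of the many weighted $L^2$ norms (with weights $U_s$, $\sqrt{U_s}$, and powers of $1+Y$) that show up in Propositions \ref{prop.Ray} and \ref{prop.Airy}, so that the zero-average property and the small-factor gains close simultaneously through alternating Rayleigh and Airy steps.
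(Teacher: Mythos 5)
Your high-level outline captures the spirit of the paper's Rayleigh--Airy iteration (same base approximation $\Phi_0 = \varphi_1+\psi_0$, same computation of the residual $R_0$, same concern about the $\alpha^{-1}$ singularity), but it is missing the key technical device that makes the iteration close: the \emph{Neumann} Airy step. The residual at every stage has the divergence form $-2\pa_Y(U_s'\psi_k)$ with $\psi_k\in H^1_0$ but \emph{not} satisfying a Neumann condition, so $\pa_Y(U_s'\psi_k)(0)=U_s'(0)\pa_Y\psi_k(0)\ne 0$ in general, and hence $\pa_Y(U_s'\psi_k)/U_s \sim c/Y$ near $Y=0$ is not in $L^2$. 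This rules out feeding the residual directly to the Rayleigh equation with the strong estimates of Proposition~\ref{prop.Ray}. Your alternative of ``routing it through the Airy side via the $\pa_Y f$ variant in \eqref{est.prop.Airy.4}'' also fails: solving a \emph{Dirichlet} Airy problem $Airy[\psi']=-2\pa_Y(U_s'\psi_k)$ and using the identity $OS[\psi']=(\pa_Y^2-\alpha^2)Airy[\psi']-2\pa_Y(U_s'\psi')$ would produce the new error $(\pa_Y^2-\alpha^2)(-2\pa_Y(U_s'\psi_k))$, which is a higher-order worsening of the residual, not a gain. The paper's fix is to first solve the \emph{Neumann} Airy problem $Airy[\tilde\psi_k]=2\pa_Y(U_s'\psi_k)$ with $\pa_Y\tilde\psi_k(0)=0$ (Proposition~\ref{prop.Airy.N}), and then to feed $U_s\tilde\psi_k$ (whose quotient by $U_s$ is simply $\tilde\psi_k\in L^2$) into the Rayleigh step, exploiting the identity $OS[\varphi]=Airy\bigl[\tfrac{1}{U_s}Ray[\varphi]\bigr]+i\eps(\pa_Y^2-\alpha^2)\tfrac{U_s''}{U_s}\varphi$. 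The cycle is therefore three-step (Neumann Airy, then Rayleigh, then Dirichlet Airy), not the two-step Rayleigh--Airy cycle you describe.

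Relatedly, your explanation of why the $\alpha^{-1}$ singularity is controlled is a little off. It is not that the Rayleigh source has exactly vanishing average at every step: the Rayleigh source is $U_s\tilde\psi_k$, and the crucial estimate from the Neumann Airy problem is \eqref{est.prop.Airy.N.5}, which gives $\bigl|\int_0^\infty U_s\tilde\psi_k\,dY\bigr|\le C\alpha^2(\ldots)$, i.e.\ small of order $\alpha^2$ rather than zero. This $\alpha^2$ smallness is what cancels the $\alpha^{-1}$ in \eqref{est.prop.Ray.2'}--\eqref{est.prop.Ray.3'}; see the chain of estimates following \eqref{proof.prop.mOS.13}. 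So the Neumann Airy step serves two purposes simultaneously: it restores $L^2$ admissibility of the Rayleigh source, and it delivers the $O(\alpha^2)$ control on the average. Both are indispensable to make case (ii) close, and both are absent from your proposal.
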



\begin{proof} We apply the iteration argument, called the Rayleigh-Airy iteration. 
Let $\varphi_1\in H^2(\R_+)\cap H^1_0(\R_+)$ be the solution to the Rayleigh equation \eqref{eq.Ray}.
Then we have 
\begin{align}\label{proof.prop.mOS.-1} 
\langle U_s H_\alpha \varphi_1 - U_s'' \varphi_1, q\rangle  + i \eps \langle H_\alpha \varphi_1, (\pa_Y^2-\alpha^2)  q\rangle = \langle f, q\rangle + i \eps \langle H_\alpha \varphi_1, (\pa_Y^2-\alpha^2) q\rangle \,, \qquad q\in H^2 (\R_+)\,.
\end{align}
Note that the identity \eqref{proof.prop.mOS.-1} holds for any $q\in H^2(\R_+)$ rather than $q\in H^2 (\R_+)$ with $\pa_Y q(0)=0$.
To correct the error term $i \eps H_\alpha^2 \varphi_1$, we observe the identity
\begin{align*}
H_\alpha [\varphi] & = \Big ( (\pa_Y^2-\alpha^2) -\frac{U_s''}{U_s} \Big )  \varphi + \frac{U_s''}{U_s} \varphi  \\
& = \frac{1}{U_s} Ray [\varphi] + \frac{U_s''}{U_s} \varphi \,.
\end{align*} 
Hence we have from $Ray[\varphi_1] = f$,
\begin{align*}
\langle U_s H_\alpha \varphi_1 - U_s'' \varphi_1, q\rangle  + i \eps \langle H_\alpha \varphi_1, (\pa_Y^2-\alpha^2) q\rangle & = \langle f, q\rangle + i \eps \langle \frac{f}{U_s} + \frac{U_s''}{U_s} \varphi_1, (\pa_Y^2-\alpha^2)  q\rangle \,,\\
& \qquad  \qquad q\in H^2 (\R_+)\,.
\end{align*}
Our next task is to recover $(\pa_Y^2-\alpha^2)$ regularity in the error term.
To this end we observe the relation
\begin{align*}
[U_s, H_\alpha] h - U_s'' h = -2 \pa_Y (U_s' h )\,,
\end{align*}
and then formally we have 
\begin{align}\label{proof.prop.mOS.1.formal} 
OS[\varphi] = (\pa_Y^2-\alpha^2) \big ( U_s \varphi  + i \eps (\pa_Y^2-\alpha^2) \big ) \varphi - 2 \pa_Y (U_s' \varphi) = (\pa_Y^2-\alpha^2 ) Airy[\varphi] - 2 \pa_Y (U_s' \varphi) \,.
\end{align}
Thus we take $\psi_1\in H^2(\R_+) \cap H^1_0(\R_+)$ as the solution to the Airy equation
\begin{align*}
Airy[\psi_1]= - i \eps \Big ( \frac{f}{U_s}  +  \frac{U_s''}{U_s} \varphi_1\Big )\,,
\end{align*}
and then $\varphi_1 + \psi_1$ satisfies 
\begin{align}\label{proof.prop.mOS.-2} 
& \langle U_s H_\alpha (\varphi_1 + \psi_1) - U_s'' (\varphi_1+\psi_1), q\rangle  + i \eps \langle H_\alpha (\varphi_1+\psi_1), (\pa_Y^2-\alpha^2)  q\rangle \nonumber \\
& = \langle f, q\rangle + i \eps \langle \frac{f}{U_s} + \frac{U_s''}{U_s}\varphi_1, (\pa_Y^2-\alpha^2)  q\rangle \nonumber \\
& \qquad  +  \langle H_\alpha U_s \psi_1, q \rangle  + \langle -2 \pa_Y (U_s' \psi_1), q \rangle  + i \eps \langle H_\alpha \psi_1,(\pa_Y^2-\alpha^2)  q\rangle \nonumber \\
& = \langle f, q\rangle + i \eps \langle \frac{f}{U_s} + \frac{U_s''}{U_s}\varphi_1, (\pa_Y^2-\alpha^2)  q\rangle  \nonumber \\
& \qquad +  \langle U_s \psi_1, (\pa_Y^2-\alpha^2)  q \rangle  + i \eps \langle H_\alpha \psi_1, (\pa_Y^2-\alpha^2)  q\rangle + \langle -2 \pa_Y (U_s' \psi_1), q \rangle   \nonumber  \\
& = \langle f, q\rangle + \langle -2 \pa_Y (U_s' \psi_1), q \rangle \,, \qquad q\in H^2 (\R_+)\,.
\end{align}
Note that, due to the fact that $\pa_Y^k (U_s \psi_1)|_{Y=0}$ for $k=0,1$, the equality \eqref{proof.prop.mOS.-2} is valid for $q\in H^2(\R_+)$ rather than $q\in H^2 (\R_+)$ with $\pa_Y q(0)=0$. The new error term is $-2\pa_Y (U_s' \psi_1)$, but this is not compatible in solving the Rayleigh equation since $-2\pa_Y (U_s' \psi_1)/U_s$ does not belong to $L^2 (\R_+)$ in general. 
Hence, we next take $\tilde \psi_1\in H^2 (\R_+)$ as the solution to $Airy[\tilde \psi_1] = 2\pa_Y (U_s' \psi_1)$ under the Neumann boundary condition $\pa_Y \tilde \psi_1|_{Y=0}=0$, and set $\varphi_2$ as the solution to the Rayleigh equation $Ray[\varphi_2] = U_s \tilde \psi_1$ with $\varphi_2|_{Y=0}=0$.
Then $\varphi_2 \in  H^2(\R_+) \cap H^1_0 (\R_+)$ and from the formal identity 
\begin{align*}
OS [\varphi] & = \Big ( U_s + i\eps (\pa_Y^2-\alpha^2) \Big ) \Big ( (\pa_Y^2-\alpha^2) -\frac{U_s''}{U_s} \Big )  \varphi +i \eps (\pa_Y^2-\alpha^2) \frac{U_s''}{U_s} \varphi  \\
& = Airy \big [\frac{1}{U_s} Ray [\varphi] \big ] + i \eps  (\pa_Y^2-\alpha^2) \frac{U_s''}{U_s} \varphi \,,
\end{align*} 
we have 
\begin{align*}
OS[\varphi_2] & = Airy[\frac{1}{U_s} U_s \tilde \psi_1]+  i \eps  (\pa_Y^2-\alpha^2) \frac{U_s''}{U_s}  \varphi_2 = 2\pa_Y (U_s' \psi_1) +  i \eps  (\pa_Y^2-\alpha^2) \frac{U_s''}{U_s}  \varphi_2 \,.
\end{align*}
This formal identity is rigorously justified in the weak formulation as follows. Let $q\in H^2 (\R_+)$ with $\pa_Y q (0)=0$.
Then $\varphi_1+\psi_1+\varphi_2$ solves 
\begin{align*}
& \langle U_s H_\alpha (\varphi_1 + \psi_1 + \varphi_2) - U_s'' (\varphi_1+\psi_1+\varphi_2), q\rangle  + i \eps \langle H_\alpha (\varphi_1+\psi_1+ \varphi_2), (\pa_Y^2-\alpha^2)   q\rangle \\
& = \langle f, q\rangle + \langle -2 \pa_Y (U_s' \psi_1), q \rangle  + \langle U_s \tilde \psi_1 , q\rangle \\
& \quad +  i\eps \langle \frac{1}{U_s} \Big (  U_s (\pa_Y^2-\alpha^2)  - U_s'' \Big ) \varphi_2 , (\pa_Y^2-\alpha^2)   q\rangle + i\eps \langle \frac{U_s''}{U_s} \varphi_2,  (\pa_Y^2-\alpha^2)  q \rangle  \\
& = \langle f, q\rangle + \langle -2 \pa_Y (U_s' \psi_1), q \rangle  + \langle U_s \tilde \psi_1 , q\rangle  +  i\eps \langle \frac{1}{U_s} U_s \tilde \psi_1 , (\pa_Y^2-\alpha^2)  q\rangle + i\eps \langle \frac{U_s''}{U_s} \varphi_2, (\pa_Y^2-\alpha^2)  q \rangle  \\
& = \langle f, q\rangle + \langle -2 \pa_Y (U_s' \psi_1), q \rangle  + \langle Airy[\tilde \psi_1], q\rangle + i\eps \langle \frac{U_s''}{U_s} \varphi_2,  (\pa_Y^2-\alpha^2)  q \rangle \\
& \qquad   \qquad ({\rm since }~q\in H^2 (\R_+) ~{\rm with}~\pa_Y q (0)=0)\\
& = \langle f, q\rangle + \langle -2 \pa_Y (U_s' \psi_1), q \rangle + \langle 2 \pa_Y (U_s'\psi_1), q\rangle + \langle i \eps  \frac{U_s''}{U_s} \varphi_2  , (\pa_Y^2-\alpha^2)  q\rangle \\
& =  \langle f, q\rangle + i \eps \langle \frac{U_s''}{U_s} \varphi_2  , (\pa_Y^2-\alpha^2)  q\rangle \,, \qquad q\in H^2 (\R_+)~{\rm with}~\pa_Y q(0)=0 \,.
\end{align*}
The error term $i \eps  (\pa_Y^2-\alpha^2) \frac{U_s''}{U_s} \varphi_2$ (in the weak form) is then handled by solving the Airy equation with the source $-i\eps \frac{U_s''}{U_s} \varphi_2$ by using the formal relation \eqref{proof.prop.mOS.1.formal}, which creates the next error terms. 
We iterate this process, namely, for $k\geq 1$ we set

\

(i) $\varphi_{k+1}\in H^2 (\R_+)\cap H^1_0 (\R_+)$ as the solution to the Rayleigh equation $Ray[\varphi_{k+1}] = U_s \tilde \psi_{k}$, 

(ii) $\psi_{k+1} \in H^2 (\R_+)\cap H^1_0 (\R_+)$ as the solution to the Airy equation $Airy[\psi_{k+1}]= -i\eps \frac{U_s''}{U_s} \varphi_{k+1}$,

(iii) $\tilde \psi_{k+1}\in H^2(\R_+)$ as the solution to $Airy[\tilde \psi_{k+1}] = 2 \pa_Y (U_s' \psi_{k+1})$ under the Neumann boundary condition $\pa_Y \tilde \psi_{k+1}|_{Y=0}=0$.

\

\noindent Then $\Phi_{m} = \varphi_1 +  \psi_1 + \sum_{k=2}^{m} \varphi_k + \sum_{k=2}^m \psi_k$, $m \geq 2$, solves 
\begin{align}\label{proof.prop.mOS.2} 
\begin{split}
\langle U_s H_\alpha \Phi_m - U_s'' \Phi_m, q\rangle  + i \eps \langle H_\alpha \Phi_m, (\pa_Y^2-\alpha^2) q\rangle  
& =  \langle f, q\rangle +  \langle -2 \pa_Y ( U_s' \psi_m) , q\rangle\,,\\
&  \qquad q\in H^2 (\R_+) ~{\rm with}~\pa_Y q(0)=0\,.
\end{split}
\end{align}
Our next aim is to show that $\Phi_m$ converges in $H^2(\R_+)\cap H^1_0(\R_+)$. Then the limit $\Phi=\displaystyle \lim_{m\rightarrow \infty}\Phi_m$ solves the weak formulation \eqref{eq.weak.mOS}, and then the regularity $H_\alpha \Phi\in H^2 (\R_+)$ is recovered from the weak formulation by regarding the term $U_s H_\alpha \Phi-U_s'' \Phi\in L^2 (\R_+)$ as the source term; indeed, \eqref{eq.weak.mOS} implies that the limit $H_\alpha \Phi$ is the very weak solution to the Poisson equation $i\eps (\pa_Y^2-\alpha^2) H_\alpha \Phi=  - U_s H_\alpha \Phi + U_s'' \Phi + f\in L^2 (\R_+)$ subject to the zero Neumann boundary condition.
{
\begin{remark}
{\rm The additional boundary condition $\pa_Y H_\alpha \Phi\vert_{h=0}{=0}$, that is derived and understood in a weak sense through the variational formulation \eqref{eq.weak.mOS}, can also be recovered at a formal level by manipulating the strong formulation  of the equations. More precisely, one can derive the identity  $\pa_Y H_\alpha (\varphi_k + \psi_k)\vert_{Y=0}{=0}$ for all $k \ge 1$ as follows. For $k \ge 2$, we differentiate the Airy equation satisfied by $\psi_{k}$ and take its trace to find 
$$ i \eps \pa_Y H_\alpha \psi_k\vert_{Y=0} = - i \eps  \pa_Y \bigl( \frac{U''_s}{U_s} \varphi_{k}\bigr)\vert_{Y=0}. 
$$   
We have used here that $\pa_Y (U_s \psi_{k})\vert_{Y=0} = 0$ due to the Dirichlet boundary condition on $\psi_{k}$. 
Similarly, we divide the Rayleigh equation satisfied by $\varphi_{k}$ by $U_s$, differentiate it and take its trace to find (using the Neumann condition satisfied by $\tilde \psi_{k-1}$):  
$$ \pa_Y H_\alpha \varphi_k\vert_{Y=0} =  \pa_Y \bigl( \frac{U''_s}{U_s} \varphi_{k}\bigr)\vert_{Y=0}. $$
The identity $\pa_Y H_\alpha (\varphi_k + \psi_k)\vert_{Y=0}{=0}$ follows for all $k \ge 2$. The same result holds for $k=1$ taking into account the additional source term $f$. }
\end{remark}
}

\mspace
To show the convergence we divide into two cases $\alpha\geq 1$ and $0<\alpha{\le} 1$.

\noindent {\bf (1) The case $\alpha\geq 1$:}  We have from (i) above and \eqref{est.prop.Ray.1}, {for $k \ge 1$}
\begin{align}\label{proof.prop.mOS.-0} 
\| \pa_Y \varphi_{k+1} \|  + \alpha \| \varphi_{k+1} \| & \leq C \| Y \tilde \psi_k \|\,,
\end{align}
while from \eqref{est.prop.Ray.2},
\begin{align}\label{proof.prop.mOS.3} 
\| (\pa_Y^2-\alpha^2) \varphi_{k+1} \| \leq C \| Y \tilde \psi_k \| + \| \tilde \psi_k \| \,.
\end{align}
Here $C$ is independent of $\alpha\geq 1$. 
Then Proposition \ref{prop.Airy.N} implies, {for $k \ge 1$},   
\begin{align}
\begin{split}
\| Y \tilde \psi_k \| & \leq \frac{C}{\eps^\frac23} \| Y U_s' \psi_k \| + \frac{C}{\eps^\frac13} \|  U_s' \psi_k \| \leq \frac{C}{\eps^\frac23} \| U_s \psi_k \| + \frac{C}{\eps^\frac13} \| \psi_k \| \,, \\
\|\tilde \psi_k\|  & \leq \frac{C}{\eps^\frac23} \| U_s' \psi_{k} \| \leq \frac{C}{\eps^\frac23} \| \psi_k \| \label{proof.prop.mOS.4} \,.
\end{split}
\end{align}
Here $C$ is independent of $\alpha$. 
This gives 
\begin{align}\label{proof.prop.mOS.1}
\| \pa_Y \varphi_{k+1} \|  + \alpha \| \varphi_{k+1} \| & \leq \frac{C}{\eps^\frac23} \| U_s \psi_k \| +  \frac{C}{\eps^\frac13} \| \psi_k \|\,, \qquad \| (\pa_Y^2-\alpha^2) \varphi_{k+1} \|  \leq \frac{C}{\eps^\frac23} \| \psi_k \| \,.
\end{align}
Next Proposition \ref{prop.Airy} shows, for $k\geq 2$,
\begin{align}\label{proof.prop.mOS.6} 
\| U_s \psi_k \| + \eps^\frac13 \| \psi_k \| + \eps^\frac23 \| \pa_Y \psi_k \| + \eps \| (\pa_Y^2-\alpha^2) \psi_k \| & \leq C \eps \| \frac{U_s''}{U_s} \varphi_k \| \leq C \eps \| \pa_Y \varphi_k \|\,.
\end{align}
Hence \eqref{proof.prop.mOS.1} and \eqref{proof.prop.mOS.6} yield
\begin{align}\label{proof.prop.mOS.7} 
\| \pa_Y \varphi_{k+1} \| +\alpha \| \varphi_{k+1} \| \leq C \eps^\frac13 \| \pa_Y \varphi_k \|\,,
\end{align}
and thus, $\sum_{k=2}^\infty \varphi_k$ converges in $H^1(\R_+)$ if $\eps>0$ is small enough and satisfies 
\begin{align}\label{proof.prop.mOS.8} 
\sum_{k=2}^\infty \| \pa_Y \varphi_k \| + \alpha \sum_{k=2}^\infty \| \varphi_k \| \leq C  \| \pa_Y \varphi_2 \| + \alpha \| \varphi_2 \|  \leq \frac{C}{\eps^\frac23} \| U_s \psi_1 \| + \frac{C}{\eps^\frac13} \| \psi_1 \| \,.
\end{align}
Here we have used \eqref{proof.prop.mOS.1} in the last line.
Then \eqref{proof.prop.mOS.1} and \eqref{proof.prop.mOS.6} show that $\sum_{k=2}^\infty \varphi_k$ converges in $H^2(\R_+)$ and satisfies 
\begin{align}\label{proof.prop.mOS.8'} 
\sum_{k=2}^\infty \| (\pa_Y^2-\alpha^2) \varphi_k \| \leq C  \| \pa_Y \varphi_2 \| + \frac{C}{\eps^\frac23} \| \psi_1 \| \leq \frac{C}{\eps^\frac23} \| \psi_1\| \,.
\end{align}
As for the convergence and the estimate of $\sum_{k=2}^\infty \psi_k$, we have from \eqref{proof.prop.mOS.6} and \eqref{proof.prop.mOS.8},
\begin{align}\label{proof.prop.mOS.9} 
\sum_{k=2}^\infty \| \pa_Y  \psi_k \| \leq C \eps^\frac13  \sum_{k=2}^\infty \| \pa_Y \varphi_k \| \leq \frac{C}{\eps^\frac13} \| U_s \psi_1 \| + C  \| \psi_1 \|\,,
\end{align}
and similarly,
\begin{align}
\alpha \sum_{k=2}^\infty \| \psi_k \| & \leq C\alpha \eps^\frac23 \sum_{k=2}^\infty \| \pa_Y \varphi_k \| \leq C \alpha (\| U_s \psi _1 \| + \eps^\frac13 \|\psi_1 \|)\,, \label{proof.prop.mOS.10} \\
\sum_{k=2}^\infty \| (\pa_Y^2 - \alpha^2)  \psi_k \| & \leq C \sum_{k=2}^\infty \| \pa_Y \varphi_k \| \leq \frac{C}{\eps^\frac23} \| U_s \psi_1 \| + \frac{C}{\eps^\frac13} \| \psi_1 \|\,. \label{proof.prop.mOS.11} 
\end{align}  
Let us recall that $\psi_1$ is decomposed as $\psi_1=\psi_0 + \psi_{1,1}$, where $\psi_0, \psi_{1,1}\in H^2 (\R_+) \cap H^1_0 (\R_+)$ are respectively the solutions to 
\begin{align*}
Airy [\psi_0] = -i\eps \frac{f}{U_s}\,, \qquad Airy[\psi_{1,1}] = -i\eps \frac{U_s''}{U_s} \varphi_1\,.
\end{align*}
Then $\psi_{1,1}$ satisfies in virtue of {\eqref{est.prop.Airy.0} in} Proposition \ref{prop.Airy},
\begin{align}\label{proof.prop.mOS.9'} 
\| U_s \psi_{1,1} \| + \eps^\frac13 \| \psi_{1,1} \| + \eps^\frac23 \| \pa_Y \psi_{1,1} \|  + {\eps}\| (\pa_Y^2-\alpha^2) \psi_{1,1} \| {\le C \eps \| \frac{U_s''}{U_s} \varphi_1 \|} \leq C \eps \| \pa_Y \varphi_1 \|\,,
\end{align}
or we also have {from \eqref{est.prop.Airy.4} and \eqref{est.prop.Airy.6},}
\begin{align}\label{proof.prop.mOS.9''} 
\eps^\frac13 \| U_s \psi_{1,1} \| + \eps^\frac23 \| \psi_{1,1} \| + \eps \| \pa_Y \psi_{1,1} \|  \leq C \eps \| U_s'' \varphi_1 \|\,.
\end{align}
{Note that \eqref{proof.prop.mOS.9'} and \eqref{proof.prop.mOS.9''} are valid for all $\alpha>0$.}
Collecting \eqref{proof.prop.mOS.8}, \eqref{proof.prop.mOS.8'}, \eqref{proof.prop.mOS.9}, \eqref{proof.prop.mOS.10}, \eqref{proof.prop.mOS.11}, and \eqref{proof.prop.mOS.9'}, we obtain 
\begin{align}\label{proof.prop.mOS.12'} 
\begin{split}
\| \pa_Y \big ( \Phi -\varphi_1 - \psi_0 \big) \| & \leq  C \eps^\frac13 \| \pa_Y \varphi_1 \| + \frac{C}{\eps^\frac13} \| \psi_0 \| + \frac{C}{\eps^\frac23} \| U_s \psi_0 \| \,,\\
\alpha \| \Phi -\varphi_1-\psi_0  \|  & \leq C \eps^\frac13 ( 1+ \alpha \eps^\frac13 ) \| \pa_Y \varphi_1 \| +  C (\alpha \eps^\frac13 +\frac{1}{\eps^\frac13} ) \big ( \| \psi_0 \| + \frac{1}{\eps^\frac13} \| U_s \psi_0 \| \big ) \,,\\
\| (\pa_Y^2-\alpha^2) \big ( \Phi -\varphi_1 -\psi_0 \big ) \| & \leq  C \| \pa_Y \varphi_1 \| + \frac{C}{\eps^\frac23} \| \psi_0 \|\,.
\end{split}
\end{align}
The proof for the case $\alpha\geq 1$ is complete.

\noindent {\bf (2) The case $0<\alpha\leq 1$:} We first observe from Proposition \ref{prop.Airy.N} that 
\begin{align*}
\| (1+Y)^2 U_s \tilde \psi_k \| & \leq C \| U_s \tilde \psi_k \| + C \| Y^2 U_s \tilde \psi_k \| \\
& \leq \frac{C}{\eps^\frac13} \| U_s' \psi_k \| + \frac{C}{\eps^\frac23} \| U_s U_s' \psi_k \| \\
& \quad + \frac{C}{\eps^\frac23} \| U_s Y^2 U_s' \psi_k \|   + \frac{C}{\eps^\frac13} \| U_s Y U_s' \psi_k \| + C \| Y U_s' \psi_k \| + C \eps^\frac13 \| U_s' \psi_k \| \\
& \leq \frac{C}{\eps^\frac13} \big ( \| \psi_k \| + \frac{1}{\eps^\frac13} \| U_s \psi_k \| \big ) \,,
\end{align*}
and 
\begin{align*}
|\int_0^\infty U_s \tilde \psi_k \, d Y| & \leq C  \alpha^2 \Big ( \eps^\frac16 \| Y U_s' \psi_k \| + \eps^\frac12 \| U_s' \psi_k \| \Big )\,.
\end{align*}
Thus Proposition \ref{prop.Ray} and Proposition \ref{prop.pre.op} (1) yield, {for $k \ge 1$,}
\begin{align}\label{proof.prop.mOS.13} 
\| \pa_Y \varphi_{k+1} \| & \leq C \| (1+Y)^2 U_s \tilde \psi_k \| + \frac{C}{\alpha} \big | \int_0^\infty U_s \tilde \psi_k \, d Y \big | \nonumber \\
& \leq \frac{C}{\eps^\frac13}  \big ( \| \psi_k \| + \frac{1}{\eps^\frac13} \| U_s \psi_k \| \big ) \,.  
\end{align}
Then from \eqref{proof.prop.mOS.6}, which is valid also in the case $0<\alpha \leq 1$ with $k\geq 2$,  we have { for $k\geq 2$},
\begin{align}\label{proof.prop.mOS.14} 
\| \pa_Y \varphi_{k+1} \| & \leq C \eps^\frac13 \| \frac{U_s''}{U_s} \varphi_k \| \leq C \eps^\frac13 \| \pa_Y \varphi_k \|\,. 
\end{align}
Similarly, {for all $k \ge 1$,} 
\begin{align*}
\| (\pa_Y^2-\alpha^2) \varphi_{k+1} \| & \leq C \| (1+Y)^2 U_s \tilde \psi_k \| + \| \tilde \psi_k \| +  \frac{C}{\alpha} \big | \int_0^\infty U_s \tilde \psi_k \, d Y \big | \\
& \leq \frac{C}{\eps^\frac13} \| \psi_k \| + \frac{C}{\eps^\frac23} \| U_s \psi_k \| + \frac{C}{\eps^\frac23} \| \psi_k \| \\
& { \leq \frac{C}{\eps^\frac23} \| \psi_k \|\,,}
\end{align*}
{which implies when $k\ge2$:}
\begin{align}\label{proof.prop.mOS.16} 
\| (\pa_Y^2-\alpha^2) \varphi_{k+1} \|  & \leq C \| \pa_Y \varphi_k \|  \,.
\end{align}
Next we observe from \eqref{est.prop.Airy.N.6} that, for $\sigma [U_s \tilde \psi_k] (Y) = \int_Y^\infty U_s \tilde \psi_k \, d Y_1$, 
\begin{align*}
\| (1+Y) \sigma [U_s \tilde \psi_k] \| \leq C \| (1+Y)^2 U_s' \psi_k \| \leq C \| \psi_k \|\,.
\end{align*}
Thus we have again from Proposition \ref{prop.Ray}, {for all $k \ge 1$},
\begin{align*}
\alpha \| \varphi_{k+1} \| & \leq  C \alpha \| (1+Y)  \sigma [U_s \tilde \psi_k] \| + \frac{C}{\alpha^\frac12} \big | \int_0^\infty U_s \tilde \psi_k \, d Y \big | \nonumber \\
& \leq C \alpha \| \psi_k \| \nonumber 
\end{align*}
{and for $k \ge 2$, by \eqref{proof.prop.mOS.6}},
\begin{align}\label{proof.prop.mOS.15} 
\alpha \| \varphi_{k+1} \| & \leq C \alpha \epsilon^\frac23 \| \pa_Y \varphi_k \|
\end{align}
This ensure the convergence of $\sum_{k=2}^\infty \varphi_k$ in $H^2(\R_+)$ when $\eps$ is small enough, and we have 
\begin{align}\label{proof.prop.mOS.17} 
\begin{split}
\sum_{k=2}^\infty \| \pa_Y  \varphi_k \|  & \leq C \| \pa_Y \varphi_2 \| \leq \frac{C}{\eps^\frac13} \big ( \| \psi_1 \| + \frac{1}{\eps^\frac13} \| U_s \psi_1 \| \big ) \,,\\
 \sum_{k=2}^\infty \alpha \| \varphi_k \| & \leq \alpha \| \varphi_2 \| +  C \alpha \eps^\frac23 \| \pa_Y \varphi_2 \| \leq C \alpha \| \psi_1 \| \,,\\
\sum_{k=2}^\infty \| (\pa_Y^2-\alpha^2)  \varphi_k \| & \leq \| (\pa_Y^2-\alpha^2) \varphi_2\| + C \sum_{k=2}^\infty \| \pa_Y \varphi_k \| \leq \frac{C}{\eps^\frac23} \| \psi_1 \|\,.
\end{split}
\end{align}
Then, by applying Proposition \ref{prop.Airy}, $\sum_{k=2}^\infty \psi_k$ converges in $H^2(\R_+)$ as  in the case $\alpha\geq 1$, and we have 
\begin{align}\label{proof.prop.mOS.17'} 
\begin{split}
\sum_{k=2}^\infty \| \pa_Y \psi_k \| \leq C \eps^\frac13 \sum_{k=2}^\infty \| \pa_Y \varphi_k \| \leq C \big ( \| \psi_1 \| + \frac{1}{\eps^\frac13}  \| U_s \psi_1 \| \big )\,,\\
\sum_{k=2}^\infty \alpha \| \psi_k \| \leq C \alpha \eps^\frac23 \sum_{k=2}^\infty \| \pa_Y \varphi_k \| \leq C \alpha \eps^\frac13 \big ( \| \psi_1 \| + \frac{1}{\eps^\frac13}  \| U_s \psi_1 \| \big )\,,\\
\sum_{k=2}^\infty \| (\pa_Y^2-\alpha^2) \psi_k \| \leq  C \sum_{k=2}^\infty \| \pa_Y \varphi_k \| \leq \frac{C}{\eps^\frac13} \big ( \| \psi_1 \| + \frac{C}{\eps^\frac13}\| U_s \psi_1 \| \big )\,.
\end{split}
\end{align}
Recall that $\psi_1$ is decomposed as $\psi_1=\psi_0+\psi_{1,1}$ as in the case $\alpha \geq 1$.
Hence, from \eqref{proof.prop.mOS.17}, \eqref{proof.prop.mOS.17'},  and the estimates for $\psi_{1,1}$ in \eqref{proof.prop.mOS.9''} {(which is valid also for the case $0<\alpha \le 1$)}, we have 
\begin{align*}
\begin{split}
\| \pa_Y (\Phi - \varphi_1-\psi_0 ) \| & \leq C \eps^\frac13 \| \pa_Y \varphi_1 \| + \frac{C}{\eps^\frac13} \big ( \| \psi_0 \| + \frac{1}{\eps^\frac13}  \| U_s \psi_0 \| \big ) \,,\\
\alpha \| \Phi -\varphi_1-\psi_0 \| & \leq C \alpha \eps^\frac13  \| U_s'' \varphi_1 \| + C\alpha \| \psi_0 \|  \,,\\
\| (\pa_Y^2-\alpha^2) \big ( \Phi  - \varphi_1-\psi_0 \big )  \| & \leq  C \| \pa_Y \varphi_1 \|  + \frac{C}{\eps^\frac23} \| \psi_0 \|\,.
\end{split}
\end{align*}
The proof is complete.
\end{proof}

Proposition \ref{prop.Airy}  gives  two kinds of the estimates for $\psi_0$: 
\begin{align*}
\| U_s \psi_0 \| + \eps^\frac13 \|\psi_0 \| + \eps^\frac23 \| \pa_Y \psi _0 \| & \leq C \eps \| \frac{f}{U_s}  \| \,,\\
\eps^\frac13 \| U_s \psi_0 \| + \eps^\frac23 \|\psi_0 \| + \eps \| \pa_Y \psi _0 \| & \leq C \eps \| \frac{Y}{U_s}  f\| \,.
\end{align*}
Then, by applying Proposition \ref{prop.Ray} {and} Proposition \ref{prop.pre.op} (1) for $\varphi_1$, Proposition \ref{prop.mOS} finally yields the following corollaries in the case $\alpha \eps^\frac13\leq 1$.
\begin{corollary}\label{cor.prop.mOS.1} Let $\eps_1>0$ be the number in Proposition \ref{prop.mOS}, and let $0<\eps\leq \eps_1$ and $0<\alpha \eps^\frac13 \leq 1$. Let $f/U_s\in L^2 (\R_+)$.  Then the solution $\Phi$ in Proposition \ref{eq.weak.mOS} satisfies the following 
estimates.

\noindent {\rm (i)} when $\alpha \geq 1$,
\begin{align}
\| \pa_Y \Phi \| +\alpha \| \Phi \| & \leq C \min \{ \|\frac{Y}{U_s} f\|, ~ \frac{1}{\alpha} \| \frac{f}{U_s} \| \}\,, \label{est.cor.prop.mOS.1.1}\\
\| (\pa_Y^2-\alpha^2) \Phi \| & \leq C \min \{ \|\frac{Y}{U_s} f\|, ~ \frac{1}{\alpha} \| \frac{f}{U_s} \| \} + C \| \frac{f}{U_s} \|\,.\label{est.cor.prop.mOS.1.2}
\end{align}

\noindent {\rm (ii)} when $0<\alpha\leq 1$, if $(1+Y)^2 f\in L^2 (\R_+)$ in addition,
\begin{align}
\alpha \| \Phi \| & \leq C \alpha \| (1+Y) \sigma[f] \| + C \alpha\eps^\frac13 \| (1+Y) f \|  + \frac{C}{\alpha^\frac12} \, | \int_0^\infty f \, d Y | \,, \label{est.cor.prop.mOS.1.3}\\
\| \pa_Y \Phi \|  & \leq C \| (1+Y)^2 f\| + \frac{C}{\alpha} \, | \int_0^\infty f \, d Y | \,,\label{est.cor.prop.mOS.1.4}\\
\| (\pa_Y^2-\alpha^2) \Phi \| & \leq C \| (1+Y)^2 f\| + \frac{C}{\alpha} \, | \int_0^\infty f \, d Y |+ C \| \frac{f}{U_s} \|\,.\label{est.cor.prop.mOS.1.5}
\end{align}
Here $\sigma[f](Y)= \int_Y^\infty f \, d Y_1$.
\end{corollary}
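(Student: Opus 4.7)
The plan is to obtain the estimates directly from the identity
$$\Phi = \varphi_1 + \psi_0 + (\Phi - \varphi_1 - \psi_0),$$
combining three ingredients: the Rayleigh bounds of Proposition \ref{prop.Ray} on $\varphi_1$, the Airy bounds of Proposition \ref{prop.Airy} on $\psi_0$, and the remainder estimates of Proposition \ref{prop.mOS} on $\Phi-\varphi_1-\psi_0$. The hypothesis $0 < \alpha \eps^{1/3} \le 1$ plays a key role in absorbing all the loss factors $\eps^{\pm 1/3}$, $\eps^{\pm 2/3}$ that appear.

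First I would handle the large-frequency case $\alpha \ge 1$. Applying Proposition \ref{prop.Ray} (i) to $\varphi_1$ gives exactly the right-hand sides of \eqref{est.cor.prop.mOS.1.1}--\eqref{est.cor.prop.mOS.1.2}. For $\psi_0$, since $\mathrm{Airy}[\psi_0]=-i\eps f/U_s$, I would use two forms of Proposition \ref{prop.Airy}: the standard form with source $f/U_s$ produces $\|U_s\psi_0\|+\eps^{1/3}\|\psi_0\|+\eps^{2/3}\|\pa_Y\psi_0\|+\eps\|(\pa_Y^2-\alpha^2)\psi_0\|\le C\eps\|f/U_s\|$, while writing $f/U_s = (Yf/U_s)/Y$ and invoking the "$f/Y$" variant yields the improved bound $\eps^{2/3}\|\psi_0\|+\eps\|\pa_Y\psi_0\|\le C\eps\|Yf/U_s\|$ and $\|U_s\psi_0\|\le C\eps^{2/3}\|Yf/U_s\|$. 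Plugging these into \eqref{est.prop.mOS.1}--\eqref{est.prop.mOS.3}, the factors $\eps^{-1/3}\|\psi_0\|$ and $\eps^{-2/3}\|U_s\psi_0\|$ become $O(\|f/U_s\|)$ or $O(\|Yf/U_s\|)$, and the term $C\eps^{1/3}\|\pa_Y\varphi_1\|$ is harmlessly smaller than the Rayleigh bound, since $\eps^{1/3}\le\alpha^{-1}$. Similarly $(\alpha\eps^{1/3}+\eps^{-1/3})$ in \eqref{est.prop.mOS.2} is $O(\eps^{-1/3})$ under our hypothesis, so the $\alpha\|\Phi-\varphi_1-\psi_0\|$ bound joins cleanly with the $\alpha\|\varphi_1\|$ bound. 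The minimum form follows by selecting either the Version 1 or Version 2 estimates consistently.

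Next I would treat the small-frequency case $0<\alpha\le 1$, which is the more delicate one. Here Proposition \ref{prop.Ray} (ii) supplies the terms $\alpha\|(1+Y)\sigma[f]\|+\alpha^{-1/2}|\int f\,dY|$ for $\alpha\|\varphi_1\|$, and $\|(1+Y)\sigma[f]\|+\|f\|+\alpha^{-1}|\int f\,dY|$ for $\|\pa_Y\varphi_1\|$; Proposition \ref{prop.pre.op} (1) then lets me replace $\|(1+Y)\sigma[f]\|$ by $C\|(1+Y)^2 f\|$ where needed. For $\psi_0$, the improved Version 2 bounds give $\|\psi_0\|\le C\eps^{1/3}\|(1+Y)f\|$ (using $Y/U_s\le C(1+Y)$), so $\alpha\|\psi_0\|\le C\alpha\eps^{1/3}\|(1+Y)f\|$, matching the target in \eqref{est.cor.prop.mOS.1.3}. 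The remainder estimate \eqref{est.prop.mOS.2'} produces a term $C\alpha\eps^{1/3}\|U_s''\varphi_1\|$; using the Hardy-type bound $\|U_s''\varphi_1\|\le C\|\pa_Y\varphi_1\|$ and then the Rayleigh estimate, the pieces become $C\alpha\eps^{1/3}\|(1+Y)\sigma[f]\|+C\alpha\eps^{1/3}\|f\|+C\eps^{1/3}|\int f\,dY|$, which are controlled by $C\alpha\|(1+Y)\sigma[f]\|+C\alpha\eps^{1/3}\|(1+Y)f\|+C\alpha^{-1/2}|\int f\,dY|$ exactly because $\eps^{1/3}\le 1$ and $\eps^{1/3}\alpha^{1/2}\le(\alpha\eps^{1/3})\alpha^{-1/2}\le\alpha^{-1/2}$. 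Similar bookkeeping for $\|\pa_Y(\Phi-\varphi_1-\psi_0)\|$ and $\|(\pa_Y^2-\alpha^2)(\Phi-\varphi_1-\psi_0)\|$ yields \eqref{est.cor.prop.mOS.1.4}--\eqref{est.cor.prop.mOS.1.5}.

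The main obstacle I expect is bookkeeping: every inequality is a combination of the three propositions, but one has to choose the right version of the Airy estimate (Version 1 or Version 2, i.e.\ source $f/U_s$ or source $Yf/U_s$ via $f/Y$) for each target norm, and then verify that the constraint $\alpha\eps^{1/3}\le 1$ is enough to absorb all the spurious $\eps^{1/3}$ and $\eps^{-1/3}$ factors without degrading the bound. No new analytic ingredient is needed beyond Propositions \ref{prop.Ray}, \ref{prop.pre.op}, \ref{prop.Airy}, and \ref{prop.mOS}.
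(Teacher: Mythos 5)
Your proposal is correct and follows exactly the route the paper outlines just before the corollary: decompose $\Phi = \varphi_1 + \psi_0 + (\Phi-\varphi_1-\psi_0)$, use the two Airy bounds for $\psi_0$ (with source $f/U_s$ and with source $(Y f/U_s)/Y$), the Rayleigh bounds of Proposition \ref{prop.Ray} together with Proposition \ref{prop.pre.op}(1) for $\varphi_1$, and Proposition \ref{prop.mOS} for the remainder, then absorb the stray $\eps^{\pm1/3}$ factors via $\alpha\eps^{1/3}\le 1$. Your bookkeeping, including the choice of Airy version for each norm and the Hardy bound $\|U_s''\varphi_1\|\le C\|\pa_Y\varphi_1\|$, matches the intended argument.
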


In Corollary \ref{cor.prop.mOS.1} the $H^2$ norm of the solution $\Phi$ is estimated uniformly in the small number $\eps>0$, but under the condition of $f/U_s\in L^2 (\R_+)$ which implicitly imposes that $f$ vanishes on the boundary. By using the weak formulation \eqref{eq.weak.mOS} and  the standard density argument, we have another $\epsilon$-dependent bound for the $H^2$ norm of $\Phi$ when $f$ does not necessarily vanish on the boundary. Precisely, the result is stated as follows.

\begin{corollary}\label{cor.prop.mOS.2} Let $\eps_1>0$ be the number in Proposition \ref{prop.mOS}, and let $0<\eps\leq \eps_1$ and $0<\alpha \eps^\frac13 \leq 1$. Let $Yf/U_s \in L^2 (\R_+)$.  Then there exists a solution $\Phi\in H^4(\R_+)\cap H^1_0 (\R_+)$ to {\eqref{eq.mOS}} satisfying the following estimates.

\noindent {\rm (i)} when $\alpha \geq 1$,
\begin{align}
\| \pa_Y \Phi \| +\alpha \| \Phi \| & \leq C \|\frac{Y}{U_s} f\| \,, \label{est.cor.prop.mOS.2.1}\\
\| (\pa_Y^2-\alpha^2) \Phi \| & \leq \frac{C}{\eps^\frac13} \Big (  \|\frac{Y}{U_s} f\| +  \| f \| \Big )\,.\label{est.cor.prop.mOS.2.2}
\end{align}

\noindent {\rm (ii)} when $0<\alpha\leq 1$, if $(1+Y)^2 f\in L^2 (\R_+)$ in addition, 
\begin{align}
\alpha \| \Phi \| & \leq C \alpha \| (1+Y) \sigma [f] \| + C \alpha \eps^\frac13 \| (1+Y) f \| + \frac{C}{\alpha^\frac12} \, | \int_0^\infty f \, d Y |\,, \label{est.cor.prop.mOS.2.3}\\
\| \pa_Y \Phi \| & \leq C \| (1+Y)^2 f\| + \frac{C}{\alpha} \, | \int_0^\infty f \, d Y |\,, \label{est.cor.prop.mOS.2.4}\\
\| (\pa_Y^2-\alpha^2) \Phi \| & \leq \frac{C}{\eps^\frac13} \Big  (\| (1+Y)^2 f\| + \frac{C}{\alpha} \, | \int_0^\infty f \, d Y | \Big ) \,.\label{est.cor.prop.mOS.2.5}
\end{align}
Here $\sigma[f](Y)= \int_Y^\infty f \, d Y_1$.
\end{corollary}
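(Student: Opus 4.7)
The plan is to prove Corollary \ref{cor.prop.mOS.2} by a density argument based on Corollary \ref{cor.prop.mOS.1}, supplemented by an independent energy estimate that produces the extra $\eps^{-1/3}$ factor in the $H^2$ bound. Specifically, one approximates $f$ by $f_m := f\,\chi_m$ where $\chi_m\in C^\infty([0,\infty))$ vanishes on $[0,1/m]$ and equals $1$ on $[2/m,\infty)$. Since $1/U_s$ is bounded on $\mathrm{supp}\,\chi_m$, one has $f_m/U_s\in L^2(\R_+)$, and $f_m$ converges to $f$ in the relevant weighted norms $\|\cdot\|$, $\|Y\cdot/U_s\|$, $\|(1+Y)^2\cdot\|$, and $\|(1+Y)\sigma[\cdot]\|$, with $\int_0^\infty f_m\,dY\to\int_0^\infty f\,dY$. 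Corollary \ref{cor.prop.mOS.1} then supplies $\Phi_m\in H^4(\R_+)\cap H^1_0(\R_+)$ solving the weak formulation \eqref{eq.weak.mOS} with source $f_m$.

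The first two estimates in each case of Corollary \ref{cor.prop.mOS.1} do not involve $\|f_m/U_s\|$ (using the $\min$-choice $\|Yf_m/U_s\|$ in case (i)), so they yield uniform bounds on $\|\pa_Y\Phi_m\|+\alpha\|\Phi_m\|$ which converge to the desired $H^1$-level bounds \eqref{est.cor.prop.mOS.2.1}, \eqref{est.cor.prop.mOS.2.3}, \eqref{est.cor.prop.mOS.2.4}. The $H^2$-level bounds of Corollary \ref{cor.prop.mOS.1}, by contrast, contain a term $\|f_m/U_s\|$ that blows up as $m\to\infty$ and must be replaced by a different argument.

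The main new step is to obtain the uniform $H^2$ bound by working with $w_m := H_\alpha\Phi_m\in H^2(\R_+)$ directly. From $OS[\Phi_m]=f_m$ one deduces
\begin{equation*}
Airy[w_m] = f_m + U_s''\,\Phi_m, \qquad \pa_Y w_m|_{Y=0}=0,
\end{equation*}
where the Neumann condition on $w_m$ is precisely the additional boundary constraint built into the weak formulation \eqref{eq.weak.mOS}. Taking the inner product with $w_m$, splitting real and imaginary parts, and invoking the interpolation inequality \eqref{proof.prop.Airy.5} exactly as in the proof of Proposition \ref{prop.Airy}, one arrives at
\begin{equation*}
\|w_m\| \;\leq\; \frac{C}{\eps^{1/3}}\bigl(\|f_m\|+\|U_s''\,\Phi_m\|\bigr) \;\leq\; \frac{C}{\eps^{1/3}}\bigl(\|f_m\|+\|\pa_Y\Phi_m\|\bigr),
\end{equation*}
where the last step uses the Hardy-type bound $\|U_s''\,\Phi_m\|\leq \|YU_s''\|_{L^\infty}\|\Phi_m/Y\|_{L^2}\leq C\|\pa_Y\Phi_m\|$. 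Together with the uniform $H^1$ control from the previous paragraph, this yields exactly the $\eps^{-1/3}$-loss $H^2$ estimates \eqref{est.cor.prop.mOS.2.2}, \eqref{est.cor.prop.mOS.2.5}, uniformly in $m$.

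Finally, a weakly convergent subsequence $\Phi_m\rightharpoonup\Phi$ in $H^2(\R_+)\cap H^1_0(\R_+)$ yields a solution of the weak formulation \eqref{eq.weak.mOS} with source $f$, and all bounds pass to the limit by lower semicontinuity. Higher regularity $\Phi\in H^4(\R_+)$ is then recovered by bootstrapping on the identity $i\eps H_\alpha^2\Phi = f-Ray[\Phi]$ together with the Neumann trace $\pa_Y H_\alpha\Phi|_{Y=0}=0$. The most delicate point will be propagating this Neumann trace to the limit, which I would handle by deriving an additional uniform $H^2$ bound on $w_m$ via Proposition \ref{prop.Airy.N}-type estimates applied to the Airy equation for $w_m$, giving strong convergence of $w_m$ up to the boundary.
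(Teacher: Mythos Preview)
Your proposal is correct and follows essentially the same approach as the paper's proof: both use a density argument reducing to Corollary \ref{cor.prop.mOS.1}, and both derive the $\eps^{-1/3}$-loss $H^2$ estimate by observing that $\Psi = H_\alpha\Phi$ solves the Airy equation $Airy[\Psi] = U_s''\Phi + f$ with Neumann boundary condition and then applying the interpolation inequality \eqref{proof.prop.Airy.5} together with the Hardy bound $\|U_s''\Phi\|\le C\|\pa_Y\Phi\|$. Your final concern about propagating the Neumann trace is slightly overcomplicated: since that trace is encoded in the weak formulation \eqref{eq.weak.mOS}, it passes to the limit automatically once $\Phi_m \rightharpoonup \Phi$ in $H^2$, and the $H^4$ regularity (with $\pa_Y H_\alpha\Phi|_{Y=0}=0$) is then recovered a posteriori by elliptic regularity for the Poisson equation $i\eps(\pa_Y^2-\alpha^2)H_\alpha\Phi = -U_s H_\alpha\Phi + U_s''\Phi + f$, exactly as explained after \eqref{eq.weak.mOS}.
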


\mspace

\begin{remark}{\rm Corollary \ref{cor.prop.mOS.2} implies that $\varphi_1+\psi_0\in H^2 (\R_+)$ even when $f$ does not vanish on the boundary, though neither $\varphi_1$ or $\psi_0$ belongs to $H^2(\R_+)$ for such a case. }
\end{remark}

\begin{proof} Thanks to Corollary \ref{cor.prop.mOS.1},  it suffices to show the estimates \eqref{est.cor.prop.mOS.2.2} and \eqref{est.cor.prop.mOS.2.5} for the solution $\Phi$ obtained  in Corollary \ref{cor.prop.mOS.1}. Then Corollary \ref{cor.prop.mOS.2} follows from the density argument.
Let $\Phi\in H^4 (\R_+) \cap H^1_0 (\R_+)$, $\pa_Y H_\alpha \Phi |_{Y=0} =0$, be the solution to \eqref{eq.weak.mOS} obtained in Corollary \ref{cor.prop.mOS.1}. Then $\Psi=H_\alpha\Phi$ solves the Airy equation $Airy [\Psi] = U_s'' \Phi + f\in L^2 (\R_+)$ subject to the Neumann boundary condition $\pa_Y \Psi|_{Y=0} =0$. Thus we have from the integration by parts,
\begin{align}
\| \sqrt{U_s} \Psi \|^2  - i \eps \big ( \|\pa_Y \Psi \|^2 + \alpha^2 \| \Psi \|^2 \big )  =  \langle U_s'' \Phi + f, \Psi \rangle\,.
\end{align}
Hence the interpolation inequality 
\begin{align*}
\| \Psi \|^2 \leq C \| \sqrt{U_s} \Psi \|^\frac43 \| \pa_Y \Psi \|^\frac23 + C \| \sqrt{U_s} \Psi \|^2
\end{align*}
implies 
\begin{align}
\| \Psi \| \leq \frac{C}{\eps^\frac13} \| U_s'' \Phi + f \| 
\end{align}
as in the proof of Proposition \ref{prop.Airy}. Since $\|U_s'' \Phi \|\leq C \| \pa_Y \Phi \|$ by the Hardy inequality, estimates \eqref{est.cor.prop.mOS.2.2} and \eqref{est.cor.prop.mOS.2.5} follow from the estimate of $\|\pa_Y \Phi \|$ in Corollary \ref{cor.prop.mOS.1}. The proof is complete.
\end{proof}

\subsection{Construction of a boundary corrector}\label{subsec.corrector}

Let $\Phi_{slip}[f]$ be the solution to the modified Orr-Sommerfeld equation \eqref{eq.mOS} obtained by Proposition \ref{prop.mOS}.
Then the solution to the original problem \eqref{eq.OS} is obtained by solving the equation
\begin{align}\label{eq.OS_b}
\begin{cases}
& OS[\phi] =   0 \,, \qquad Y>0\,,\\
& \phi|_{Y=0} = 0\,, \quad \pa_Y \phi|_{Y=0} = - \pa_Y \Phi_{slip} [f]|_{Y=0} \,.
\end{cases}
\end{align}
{\begin{remark}
By interpolation of \eqref{est.cor.prop.mOS.2.1} and \eqref{est.cor.prop.mOS.2.2}, or \eqref{est.cor.prop.mOS.2.4} and \eqref{est.cor.prop.mOS.2.5}, one has
\begin{align}
\label{bound.derivative.Phi.slip1}
|\pa_Y \Phi_{slip} [f]|_{Y=0}| & \le \frac{C}{\eps^{\frac16}} \left( \| \frac{Y}{U_s}f\| + \|f\|\right) & \quad \text{if } \: \alpha \ge 1, \\
\label{bound.derivative.Phi.slip2}
|\pa_Y \Phi_{slip} [f]|_{Y=0}| & \le \frac{C}{\eps^{\frac16}} \left( \|(1+Y)^2f\| + \frac{1}{\alpha} | \int_0^\infty f \, d Y|  \right) & \quad \text{if } \:  0 < \alpha \le 1.
\end{align}
\end{remark}}

\subsection{Slow mode}\label{subsec.slow}

In this subsection we construct a solution $\phi_{slow}$ to the equation 
\begin{align}\label{eq.OS_slow}
\begin{cases}
& OS [\phi] =   0 \,, \qquad Y>0\,,\\
& \phi|_{Y=0} = 1 + {\rm small~order}\,,
\end{cases}
\end{align}
decaying as $Y\rightarrow \infty$ around the Rayleigh solution $\varphi_{slow,Ray}$ obtained in Corollary \ref{cor.prop.slow.Ray} and Proposition \ref{prop.slow.Ray'}.  This solution is called the slow mode, {and will take the form  
 $\phi_{slow} = \varphi_{slow,Ray} + \tilde \phi_{slow}$.}

For the moment we assume that $\frac{U_s''}{U_s}\in L^2 (\R_+)$, which ensures the $H^2$ regularity of $\varphi_{slow,Ray}$ and justifies the formal computation in various steps.
Later we shall recover the $H^2$ regularity of $\phi_{slow}$ which does not depend on the condition $\frac{U_s''}{U_s}\in L^2 (\R_+)$. Then the standard limiting process gives the result for the general case without the condition $\frac{U_s''}{U_s}\in L^2 (\R_+)$.
Firstly we observe that $\varphi_{slow,Ray}$ satisfies the identity 
\begin{align*}
& \langle Ray[\varphi_{slow,Ray}], q \rangle + i \eps \langle (\pa_Y^2-\alpha^2) \varphi_{slow,Ray}, (\pa_Y^2-\alpha^2) q\rangle \\
& = i\eps \langle (\pa_Y^2-\alpha^2) \varphi_{slow,Ray}, (\pa_Y^2-\alpha^2) q \rangle \\
& =  i\eps \langle \frac{1}{U_s} Ray[\varphi_{slow,Ray}] +\frac{U_s''}{U_s} \varphi_{slow,Ray}, (\pa_Y^2-\alpha^2) q \rangle\\
& = i\eps \langle \frac{U_s''}{U_s} \varphi_{slow,Ray}, (\pa_Y^2-\alpha^2) q \rangle\,, \qquad q\in H^2 (\R_+)\,.
\end{align*}
In this computation we are using the condition $\frac{U_s''}{U_s}\in L^2 (\R_+)$ so that each term makes sense for any $q\in H^2 (\R_+)$.
By the ansatz $\phi_{slow} = \varphi_{slow,Ray}+\tilde \phi_{slow}$ we shall construct $\tilde \phi_{slow}\in H^2 (\R_+) \cap H^1_0 (\R_+)$ as the solution to 
\begin{align*}
& \langle Ray[\tilde \phi_{slow}], q \rangle + i \eps \langle (\pa_Y^2-\alpha^2) \tilde \phi_{slow}, (\pa_Y^2-\alpha^2) q \rangle  = -i\eps \langle \frac{U_s''}{U_s} \varphi_{slow,Ray}, (\pa_Y^2-\alpha^2) q \rangle \,,\\
& \qquad  \qquad q\in H^2 (\R_+)\,, \quad \pa_Y q (0) =0\,.
\end{align*}
To this end, by using $OS = (\pa_Y^2-\alpha^2) Airy -2\pa_Y (U_s' \cdot)$ we take ${\psi_{slow}}\in H^2 (\R_+) \cap H^1_0 (\R_+)$ as the solution to 
\begin{align*}
Airy [\psi_{slow}] = i\eps \frac{U_s''}{U_s} \varphi_{slow,Ray}{\,.}
\end{align*}
Then we see 
\begin{align*}
& \langle Ray[\psi_{slow}], q \rangle + i \eps \langle (\pa_Y^2-\alpha^2)\psi_{slow}, (\pa_Y^2-\alpha^2) q \rangle \\
&  =  \langle Airy[\psi_{slow}], (\pa_Y^2-\alpha^2) q\rangle  -  2 \langle \pa_Y (U_s' \psi_{slow}),  q \rangle \\
& =  i\eps \langle \frac{U_s''}{U_s} \varphi_{slow,Ray}, (\pa_Y^2-\alpha^2) q \rangle  -  2 \langle \pa_Y (U_s' \psi_{slow}),  q \rangle  \,, \qquad q\in H^2 (\R_+)\,.
\end{align*}
Finally we take ${\Phi_{slow}}\in H^4 (\R_+) \cap H^1_0 (\R_+)$ as the solution to \eqref{eq.mOS} with the source term $2\pa_Y (U_s' \psi_{slow} )$, which is constructed in Corollary \ref{cor.prop.mOS.2}. Then $\tilde \phi_{slow}$ is constructed in the form $\tilde \phi_{slow} = \psi_{slow} + \Phi_{slow}\in H^2 (\R_+) \cap H^1_0 (\R_+)$.
Moreover, the function 
\begin{align*}
\phi_{slow} = \varphi_{slow,Ray} + \psi_{slow} + \Phi_{slow}\in H^2 (\R_+)
\end{align*}
satisfies $OS[\phi_{slow}]=0$ in the weak form:
\begin{align}\label{proof.slow.1}
\begin{split}
& \langle U_s (\pa_Y^2-\alpha^2) \phi_{slow} - U_s'' \phi_{slow}, q \rangle + i\eps \langle (\pa_Y^2-\alpha^2) \phi_{slow}, (\pa_Y^2-\alpha^2)  q\rangle =0 \,,\\
& \qquad  \qquad q\in H^2 (\R_+)\,, \quad \pa_Y q (0)=0\,.
\end{split}
\end{align}
The requirement $\pa_Y q(0)=0$ for the test function $q$ is due to the weak formulation of $\Phi_{slow}$ as in \eqref{eq.weak.mOS}.
The identity \eqref{proof.slow.1} implies that $\Psi_{slow}=(\pa_Y^2-\alpha^2) \phi_{slow}$ is the very weak solution to the Poisson equation $i\eps (\pa_Y^2-\alpha^2) \Psi_{slow} = - U_s \Psi_{slow} + U_s'' \phi_{slow}$ subject to the Neumann boundary condition $\pa_Y \Psi_{slow}|_{Y=0} =0$.
Hence $\Psi_{slow}$ belongs to $H^2(\R_+)$ by the elliptic regularity and satisfies the Airy equation $Airy[\Psi_{slow}]=U_s'' \phi_{slow}$ with the zero Neumann boundary condition.
Then we have 
\begin{align}\label{proof.slow.2}
\langle U_s \Psi_{slow} - U_s'' \phi_{slow}, \Psi_{slow}\rangle  -i\eps\big ( \| \pa_Y \Psi_{slow} \|^2 + \alpha^2 \| \Psi_{slow}\|^2 \big ) = 0\,,
\end{align}
which yields, as in the proof of Corollary \ref{cor.prop.mOS.2}, 
\begin{align}\label{proof.slow.3}
\| (\pa_Y^2-\alpha^2) \phi_{slow} \| = \| \Psi_{slow} \| \leq \frac{C}{\eps^\frac13} \| U_s'' \phi_{slow} \|\,.
\end{align} 
Hence, the $H^2$ regularity of $\phi_{slow}$ is estimated in terms of $\|U_s'' \phi_{slow}\|$, for which the condition $\frac{U_s''}{U_s}\in L^2 (\R_+)$ is not required.
Now it suffices to establish the estimates of $\phi_{slow}=\varphi_{slow,Ray}+ \tilde \phi_{slow} = \varphi_{slow,Ray}+ \psi_{slow} + \Phi_{slow}$, which will be considered below depending on the two cases {$0<\alpha \leq 1$ and $\alpha \geq 1$}.

\subsubsection{Estimates in the case {$0<\alpha \leq 1$}}\label{subsubsec.slow.1}
Let us estimate $\tilde \phi_{slow}$, which is equal to $\psi_{slow} + \Phi_{slow}\in H^2(\R_+)\cap H^1_0 (\R_+)$.
Since the function $\psi_{slow}$ satisfies $Airy[\psi_{slow}] = i\eps \frac{U_s''}{U_s} \varphi_{slow,Ray}$ we have from $\varphi_{slow,Ray} = \frac{c_E}{\alpha}  U_s e^{-\alpha Y} + \varphi_{sRay,1} + \varphi_{sRay,2}$, 
\begin{align*}
\psi_{slow} = \psi_{slow,0} + \psi_{slow,1}\,,
\end{align*}
where $\psi_{slow,0}{\in H^2(\R_+) \cap H^1_0 (\R_+)}$ is the solution to 
\begin{align*}
Airy[\psi_{slow,0}] = i \eps \frac{c_E}{\alpha} U_s'' e^{-\alpha Y}\,, \qquad \psi_{slow,0} |_{Y=0} =0\,,
\end{align*} 
while $\psi_{slow,1}{\in H^2 (\R_+) \cap H^1_0 (\R_+)}$ is the Airy solution with the source $i\eps \frac{U_s''}{U_s} (\varphi_{sRay,1} + \varphi_{sRay,2})$.
As for the estimates of $\psi_{slow,0}$ we have from Proposition \ref{prop.Airy},
\begin{align*}
\| \pa_Y \psi_{slow,0} \|  & \leq C \eps^\frac13 \| \frac{c_E}{\alpha} U_s'' e^{-\alpha Y}\| \leq \frac{C \eps^\frac13}{\alpha}\,, 
\end{align*}
and 
\begin{align*}
\alpha \| \psi_{slow,0} \| \leq C \alpha \eps^\frac23 \| \frac{c_E}{\alpha} U_s'' e^{-\alpha Y}\| \leq C  \eps^\frac23\,.
\end{align*}
As for the estimates of $\psi_{slow,1}$, in virtue of \eqref{est.prop.Airy.4} of Proposition \ref{prop.Airy}, we see
\begin{align*}
\| \pa_Y \psi_{slow,1} \| & \leq C  \| \frac{Y U_s''}{U_s} (\varphi_{sRay,1} + \varphi_{sRay,2})\| \leq C\,.
\end{align*}
and 
\begin{align*}
\alpha \|  \psi_{slow,1} \| & \leq C \alpha \eps^\frac13 \| \frac{Y U_s''}{U_s} (\varphi_{sRay,1} + \varphi_{sRay,2}) \| \leq C\alpha \eps^\frac13\,,
\end{align*}
In order to estimate $\Phi_{slow}$, the important point is that $\int_0^\infty \pa_Y (U_s' \psi_{slow}) \, d Y =0$ in virtue of $\psi_{slow}\in H^1_0(\R_+)$, and thus, Corollary \ref{cor.prop.mOS.2} yields the estimates of $\Phi_{slow}$. Let us decompose $\Phi_{slow}$ as $\Phi_{slow}=\Phi_{slow,0} + \Phi_{slow,1}$ according to the decomposition of the source term $2\pa_Y (U_s' \psi_{slow,0} + U_s' \psi_{slow,1})$. 
Then we have from Corollary \ref{cor.prop.mOS.2},
\begin{align}\label{proof.slow.4}
\| \pa_Y\Phi_{slow,0} \| + \alpha \|  \Phi_{slow,0} \| \leq C \| (1+Y)^2 \pa_Y (U_s'  \psi_{slow,0} ) \|  & \leq C\| \pa_Y \psi_{slow,0}  \| \leq \frac{C\eps^\frac13}{\alpha} \,.
\end{align}
Similarly, we have for $\psi_{slow,1}$,
\begin{align}\label{proof.slow.5}
\begin{split}
\| \pa_Y \Phi_{slow,1} \| +\alpha \| \Phi_{slow,1} \| & \leq C  \| (1+Y)^2 \pa_Y (U_s' \psi_{slow,1} )  \|  \\
& \leq C \| \pa_Y  \psi_{slow,1} \|  \leq C\,.
\end{split}
\end{align}
Collecting these above, we have for $\tilde \phi_{slow} = \psi_{slow} + \Phi_{slow}=\psi_{slow,0} +  \psi_{slow,1} + \Phi_{slow,0} +  \Psi_{slow,1}$,
\begin{align}\label{proof.slow.6}
\| \pa_Y \tilde \phi_{slow} \| + \alpha \| \tilde \phi_{slow} \| \leq C (\frac{\eps^\frac13}{\alpha} + 1 ) \,,
\end{align}
{The next step is to estimate $\| (\pa_Y^2-\alpha^2) \phi_{slow,re} \|$, where 
$$\phi_{slow,re}=\phi_{slow}- \frac{c_E}{\alpha} U_s e^{-\alpha Y}\,.$$
Combined with the estimate on $\| \pa_Y \phi_{slow,re}\|$, that can be deduced from the previous bounds, we will obtain by interpolation an $L^\infty$ bound on $\pa_Y \phi_{slow,re}$. The point is to show that  $\pa_Y \phi_{slow,re}(0) \ll \pa_Y \bigl(\frac{c_E}{\alpha} U_s e^{-\alpha Y}\bigr)(0)$ when $\alpha \ll 1$. Unfortunately,  \eqref{proof.slow.3} is not accurate enough for this purpose, and we shall rather make use of \eqref{proof.slow.2}.}

Set $\Psi_{slow,re} = (\pa_Y^2-\alpha^2) \phi_{slow,re}$.
We observe that 
\begin{align*}
\Psi_{slow} & = (\pa_Y^2-\alpha^2) \phi_{slow} = \frac{c_E}{\alpha} (U_s'' e^{-\alpha Y} -2 \alpha U_s' e^{-\alpha Y} ) +  \Psi_{slow,re}\,,\\
U_s \Psi_{slow} - U_s'' \phi_{slow} & = -2c_E U_s U_s' e^{-\alpha Y} + U_s \Psi_{slow,re} - U_s'' \phi_{slow,re}\,.
\end{align*}
Thus \eqref{proof.slow.2} gives 
\begin{align}\label{proof.slow.7}
\begin{split}
& \langle  -2c_E U_s U_s' e^{-\alpha Y} + U_s \Psi_{slow,re}-U_s'' \phi_{slow,re},   \frac{c_E}{\alpha} (U_s'' e^{-\alpha Y} -2 \alpha U_s' e^{-\alpha Y} ) +  \Psi_{slow,re} \rangle \\
& \quad - i \eps \Big ( \| \pa_Y (\frac{c_E}{\alpha} U_s e^{-\alpha Y}) \|^2 + \alpha^2 \| \frac{c_E}{\alpha} U_s e^{-\alpha Y} \|^2 + \| \pa_Y \Psi_{slow,re} \|^2 + \alpha^2 \| \Psi_{slow,re} \|^2 \\
& \qquad + 2 \Re \langle \pa_Y (\frac{c_E}{\alpha} {U_s}e^{-\alpha Y}) , \pa_Y {\Psi_{slow,re}} \rangle + 2 \alpha^2 \Re \langle \frac{c_E}{\alpha} e^{-\alpha Y}, {\Psi_{slow,re}} \rangle \Big ) \\
& =0\,.
\end{split}
\end{align}
The real part of this identity gives 
\begin{align*}
\| \sqrt{U_s} \Psi_{slow,re} \|^2 & = \Re \langle 2c_E U_s U_s' e^{-\alpha Y} + U_s'' \phi_{slow,re}, \Psi_{slow,re}\rangle  \\
& \quad - \Re \langle U_s \Psi_{slow,re}, \frac{c_E}{\alpha} (U_s'' e^{-\alpha Y} -2 \alpha U_s' e^{-\alpha Y} ) \rangle \\
& \qquad + \Re \langle 2c_E U_s U_s' e^{-\alpha Y} + U_s'' \phi_{slow,re},\frac{c_E}{\alpha} (U_s'' e^{-\alpha Y} -2 \alpha U_s' e^{-\alpha Y} ) \rangle \,,
\end{align*}
which implies 
\begin{align*}
\| \sqrt{U_s} {\Psi_{slow,re}} \|^2 & \leq C \| \sqrt{U_s} U_s' e^{-\alpha Y} \|^2 + \| U_s'' \phi_{slow,re} \|\, \| \Psi_{slow,re} \| \\
& \quad + \frac{C}{\alpha^2} \| \sqrt{U_s} \big ( U_s'' e^{-\alpha Y} -2 \alpha U_s' e^{-\alpha Y} \big )\|^2  \\
& \qquad + \frac{C}{\alpha} ( 1+ \| U_s'' \phi_{slow,re} \| ) \\
& \leq \frac{C}{\alpha^2}  +  \| U_s'' \phi_{slow,re} \|  ( \| \Psi_{slow,re} \|  + \| U_s'' \phi_{slow,re} \| ) \,.
\end{align*}
Recall that $\phi_{slow,re} = \varphi_{sRay,1} + \varphi_{sRay,2} + \tilde \phi_{slow}$, and thus, by Corollary \ref{cor.prop.slow.Ray} and \eqref{proof.slow.6},
\begin{align}\label{proof.slow.8}
\| U_s'' \phi_{slow,re} \| \leq C (\frac{\eps^\frac13}{\alpha} + 1 )\,.
\end{align} 
Then we have 
\begin{align}\label{proof.slow.9}
\| \sqrt{U_s} \Psi_{slow,re} \|^2 
& \leq \frac{C}{\alpha^2}  +  C (\frac{\eps^\frac13}{\alpha} + 1 )  \| \Psi_{slow,re} \| \,.
\end{align}
On the other hand, the imaginary part of \eqref{proof.slow.7} yields, since $\langle -2c_E U_s U_s' e^{-\alpha Y}, \frac{c_E}{\alpha} (U_s'' e^{-\alpha Y} -2 \alpha U_s' e^{-\alpha Y} )\rangle \in \R$,
\begin{align}\label{proof.slow.10}
& \eps \big ( \| \pa_Y \Psi_{slow,re} \|^2 + \alpha^2 \| \Psi_{slow,re} \|^2 ) \\
 \leq & \frac{C \eps}{\alpha^2}  + \frac{C}{\alpha} \| \sqrt{U_s} \Psi_{slow,re} \|  + \frac{C}{\alpha} \| U_s'' \phi_{slow,re} \| + C \| \Psi_{slow,re} \|  + {C  \| U_s'' \phi_{slow,re}  \| \| \Psi_{slow,re} \|}  \nonumber \\
\leq & \frac{C}{\alpha^2} + C (\frac{\eps^\frac13}{\alpha} + 1) \| \Psi_{slow,re} \|\,.
\end{align}
Thus the inequality
\begin{align*}
\| \Psi_{slow,re} \|^2 \leq C \| \sqrt{U_s} \Psi_{slow,re} \|^\frac43 \| \pa_Y \Psi_{slow,re} \|^\frac23 + C \| \sqrt{U_s} \Psi_{slow,re} \|^2\,, 
\end{align*}
combined with \eqref{proof.slow.9} and \eqref{proof.slow.10}, leads to 
\begin{align*}
\| \Psi_{slow,re} \|^2 \leq \frac{C}{\eps^\frac13} \Big ( \frac{1}{\alpha^2} + (\frac{\eps^\frac13}{\alpha} + 1 ) \| \Psi_{slow,re} \| \Big ) \leq \frac{C}{\eps^\frac13 \alpha^2} + \frac{C}{\eps^\frac23}  (\frac{\eps^\frac13}{\alpha} + 1 )^2 \,.
\end{align*}
Hence we have arrived at 
\begin{align}\label{proof.slow.11}
\| (\pa_Y^2-\alpha^2) \phi_{slow,re} \| = \| \Psi_{slow,re} \| \leq \frac{C}{\eps^\frac16 \alpha} + \frac{C}{\eps^\frac13}\,.
\end{align}
The above estimates are valid without the condition $\frac{U_s''}{U_s}\in L^2 (\R_+)$.
We summarize the above results as follows.
\begin{proposition}\label{prop.slow.small} Let $0<\eps \leq \eps_1$ and ${0<\alpha \leq 1}$. Then there exists a solution $\phi_{slow}\in H^4 (\R_+)$ to $OS [\phi_{slow}]=0$ satisfying the following properties: $\phi_{slow} = \frac{c_E}{\alpha} U_s e^{-\alpha Y} + \phi_{slow,re}$, where 
\begin{align}
{\phi_{slow} (0) = 1}\,, \label{est.prop.slow.small.1}
\end{align}
and 
\begin{align}
\| \pa_Y \phi_{slow, re} \| + \alpha \| \phi_{slow,re} \| & \leq C ( \frac{\eps^\frac13}{\alpha} + 1 ) \,, \label{est.prop.slow.small.2}\\
\| \pa_Y \phi_{slow,re} \|_{L^\infty} & \leq C ( \frac{\eps^\frac{1}{12}}{\alpha} + \frac{1}{\eps^\frac14} )\,,\label{est.prop.slow.small.3}\\
\| (\pa_Y^2-\alpha^2 ) \phi_{slow,re} \| & \leq C ( \frac{1}{\eps^\frac16 \alpha} + \frac{1}{\eps^\frac13} )\,.\label{est.prop.slow.small.4}
\end{align}
In particular, we have 
\begin{align} 
\pa_Y \phi_{slow} (0) = \frac{c_E U_s'(0)}{\alpha} + O (\frac{\eps^\frac{1}{12}}{\alpha} + \frac{1}{\eps^\frac14})\,.\label{est.prop.slow.small.5}
\end{align}
\end{proposition}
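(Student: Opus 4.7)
The plan is to construct $\phi_{slow}$ as a correction of the Rayleigh slow mode $\varphi_{slow,Ray}$ from Corollary \ref{cor.prop.slow.Ray} by writing $\phi_{slow} = \varphi_{slow,Ray} + \tilde\phi_{slow}$. Using the identity $OS[\phi] = Airy[\frac{1}{U_s}Ray[\phi]] + i\eps(\pa_Y^2-\alpha^2)\frac{U_s''}{U_s}\phi$ and the fact that $Ray[\varphi_{slow,Ray}] = 0$, the equation $OS[\phi_{slow}] = 0$ reduces to finding $\tilde\phi_{slow}$, with Dirichlet data $\tilde\phi_{slow}(0)=0$, such that $OS[\tilde\phi_{slow}] = -i\eps(\pa_Y^2-\alpha^2)\frac{U_s''}{U_s}\varphi_{slow,Ray}$. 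I would next invoke the splitting $OS[\phi] = (\pa_Y^2-\alpha^2)Airy[\phi] - 2\pa_Y(U_s'\phi)$ and absorb the principal error by solving $Airy[\psi_{slow}] = i\eps\frac{U_s''}{U_s}\varphi_{slow,Ray}$ with homogeneous Dirichlet condition via Proposition \ref{prop.Airy}. This generates a new error $-2\pa_Y(U_s'\psi_{slow})$, which is handled by the modified Orr--Sommerfeld solver $\Phi_{slow}$ of Corollary \ref{cor.prop.mOS.2}. Setting $\tilde\phi_{slow} = \psi_{slow}+\Phi_{slow}$ then yields a solution with boundary value $\phi_{slow}(0) = \varphi_{slow,Ray}(0) = 1$, which establishes \eqref{est.prop.slow.small.1}.

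The quantitative bounds are obtained by splitting the Airy source according to the decomposition $\varphi_{slow,Ray} = \frac{c_E}{\alpha}U_s e^{-\alpha Y}+\varphi_{sRay,1}+\varphi_{sRay,2}$ of Corollary \ref{cor.prop.slow.Ray}. Applying Proposition \ref{prop.Airy} term by term yields $\|\pa_Y\psi_{slow}\| + \alpha\|\psi_{slow}\| \le C(\eps^{1/3}/\alpha + 1)$. For $\Phi_{slow}$, the crucial observation is that the source $2\pa_Y(U_s'\psi_{slow})$ has zero integral over $\R_+$ (since $\psi_{slow}\in H^1_0$), so the singular $\alpha^{-1}$ factor in \eqref{est.cor.prop.mOS.2.4} does not contribute, and the weighted norm $\|(1+Y)^2\pa_Y(U_s'\psi_{slow})\|$ controlling $\Phi_{slow}$ is tamed by $\|\pa_Y\psi_{slow}\|$ through the decay of $U_s'$. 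Combining these produces \eqref{est.prop.slow.small.2}.

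The main obstacle is the sharp $H^2$ bound \eqref{est.prop.slow.small.4}, which in turn governs the pointwise claim \eqref{est.prop.slow.small.5}. A direct application of Proposition \ref{prop.Airy.N} to the Airy equation satisfied by $\Psi_{slow} = (\pa_Y^2-\alpha^2)\phi_{slow}$ (with Neumann condition at $Y=0$ and source $U_s''\phi_{slow}$) only gives $\|\Psi_{slow}\| \le C\eps^{-1/3}\|U_s''\phi_{slow}\|$, which is too crude because $\|U_s''\phi_{slow}\|$ inherits a contribution of order $1/\alpha$ from the leading profile $\frac{c_E}{\alpha}U_s e^{-\alpha Y}$. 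To bypass this loss, I would isolate that leading profile, work with $\Psi_{slow,re} = (\pa_Y^2-\alpha^2)\phi_{slow,re}$, and test the Airy identity $Airy[\Psi_{slow}] = U_s''\phi_{slow}$ against $\Psi_{slow}$ itself. Taking real and imaginary parts separately produces bounds on $\|\sqrt{U_s}\Psi_{slow,re}\|^2$ and on $\eps(\|\pa_Y\Psi_{slow,re}\|^2+\alpha^2\|\Psi_{slow,re}\|^2)$, in which the singular $1/\alpha^2$ arising from the leading-profile cross terms enters only additively and without a destabilizing $\eps$ weight. Closing via the interpolation inequality \eqref{proof.prop.Airy.5} then gives \eqref{est.prop.slow.small.4}.

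Finally, \eqref{est.prop.slow.small.3} follows from \eqref{est.prop.slow.small.2} and \eqref{est.prop.slow.small.4} by applying the trace-type inequality $\|g\|_{L^\infty}^2\le C\|g\|\,\|\pa_Y g\|$ to $g = \pa_Y\phi_{slow,re}$; evaluating at $Y=0$ and subtracting the explicit value $\pa_Y\bigl(\frac{c_E}{\alpha}U_s e^{-\alpha Y}\bigr)\big|_{Y=0} = c_E U_s'(0)/\alpha$ then recovers \eqref{est.prop.slow.small.5}.
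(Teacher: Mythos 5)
Your proposal follows essentially the same route as the paper's own argument: the decomposition $\phi_{slow}=\varphi_{slow,Ray}+\psi_{slow}+\Phi_{slow}$ with the Airy corrector and the zero-average modified Orr--Sommerfeld corrector, the observation that the vanishing mean of $2\pa_Y(U_s'\psi_{slow})$ spares the $\alpha^{-1}$ singularity, the isolation of the leading profile $\frac{c_E}{\alpha}U_s e^{-\alpha Y}$ before testing the Airy identity for $\Psi_{slow}$ with real/imaginary parts to close the $H^2$ bound via interpolation, and the final trace-interpolation step, are all identical to what is done in Subsections \ref{subsec.slow} and \ref{subsubsec.slow.1}. The proof is correct and matches the paper's strategy point by point.
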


\begin{proof} It suffices to recall $\phi_{slow,re} = \varphi_{sRay,1} + \varphi_{sRay,2} + \tilde \phi_{slow}$. Note that {$\varphi_{sRay,2}, \tilde \phi_{slow}$ belong to $H^1_0 (\R_+)$} and that $U_s (0)=0$,
and thus, $\phi_{slow} (0) = \phi_{slow,re} (0) = {\varphi_{sRay,1} (0)  = 1}$. This proves \eqref{est.prop.slow.small.1}. Estimate \eqref{est.prop.slow.small.2} follows from Corollary \ref{cor.prop.slow.Ray} and \eqref{proof.slow.6}, while \eqref{est.prop.slow.small.4} is proved in \eqref{proof.slow.11}. Then \eqref{est.prop.slow.small.3} follows from the interpolation.
The proof is complete.
\end{proof}

\subsubsection{Estimates in the case {$\alpha \geq 1$}}\label{subsubsec.slow.2}
Let us estimate $\tilde \phi_{slow} = \tilde \psi_{slow} + \tilde \Phi_{slow}$.
Since $\tilde \psi_{slow}$ is the Airy solution with the source $f=i\eps \frac{U_s''}{U_s} \varphi_{slow,Ray}$, we apply \eqref{est.prop.Airy.4} of Proposition \ref{prop.Airy} to obtain 
\begin{align*}
\| \pa_Y \tilde \psi_{slow} \|  + \alpha \| \tilde \psi_{slow} \|  \leq  C \| \frac{Y U_s''}{U_s} \varphi_{slow,Ray} \| \leq C\,.
\end{align*}
Next we recall that $\tilde \Phi_{slow}$ is the solution to \eqref{eq.mOS} with the source $f=2\pa_Y (U_s' \tilde \psi_{slow})$, we have from Corollary \ref{cor.prop.mOS.2},
\begin{align*}
\| \pa_Y \tilde \Phi_{slow} \| + \alpha \| \tilde \Phi_{slow} \| \leq C \| (1+Y)^2 \pa_Y (U_s' \tilde \psi_{slow} ) \| \leq C \| \pa_Y \tilde \psi_{slow} \| \leq C\,.
\end{align*}
Collecting these, we obtain 
\begin{align}\label{proof.slow.12}
\| \pa_Y \tilde \phi_{slow} \|  + \alpha \| \tilde \phi_{slow} \| \leq C\,.
\end{align}
{ In the case $\alpha \geq 1$ we set $\phi_{slow,re}
$ as 
$$\phi_{slow,re} = \phi_{slow} - e^{-\alpha Y}\,.$$}
Combining \eqref{proof.slow.3} and \eqref{proof.slow.12} with Proposition \ref{prop.slow.Ray'}, we have 
\begin{align}\label{proof.slow.13}
{\| (\pa_Y^2-\alpha^2) \phi_{slow,re}  \| = }\| (\pa_Y^2-\alpha^2) \phi_{slow} \|  \leq \frac{C}{\eps^\frac13} (\| U_s'' \varphi_{slow,Ray}\| + \| U_s'' \tilde \phi_{slow} \| ) \leq \frac{C}{\eps^\frac13}\,.
\end{align}
Here $C$ is independent of the condition $\frac{U_s''}{U_s}\in L^2 (\R_+)$.
We summarize the above results as follows.
\begin{proposition}\label{prop.slow.large} Let $0<\eps \leq \eps_1$, ${\alpha \geq  1}$, and $\alpha \eps^\frac13 \leq 1$. Then there exists a solution $\phi_{slow}\in H^4 (\R_+)$ to $OS [\phi_{slow}]=0$ satisfying the following properties: $\phi_{slow} = e^{-\alpha Y} + \phi_{slow,re}$, where $\phi_{slow,re}\in H^4(\R_+)\cap H^1_0 (\R_+)$ and 
\begin{align}
\| \pa_Y \phi_{slow, re} \| + \alpha \| \phi_{slow,re} \| & \leq C \,, \label{est.prop.slow.large.1}\\
\| \pa_Y \phi_{slow,re} \|_{L^\infty} & \leq \frac{C}{\eps^\frac16}\,,\label{est.prop.slow.large.2}\\
\| (\pa_Y^2-\alpha^2 ) \phi_{slow,re} \| & \leq \frac{C}{\eps^\frac13}\,.\label{est.prop.slow.large.3}
\end{align}
In particular, $\phi_{slow}(0) =1$ and $|\pa_Y \phi_{slow} (0) | \leq \alpha + C \eps^{-\frac16}$.
\end{proposition}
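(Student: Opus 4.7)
The plan is to construct $\phi_{slow}$ as an ansatz starting from the Rayleigh slow mode $\varphi_{slow,Ray}$ of Proposition \ref{prop.slow.Ray'}, which already satisfies $Ray[\varphi_{slow,Ray}]=0$, $\varphi_{slow,Ray}\vert_{Y=0}=1$, and decomposes as $e^{-\alpha Y}+\tilde\varphi_{slow,Ray}$ with $\tilde\varphi_{slow,Ray}\in H^1_0(\R_+)$ and $H^1$-norm bounded by a constant. Plugging $\varphi_{slow,Ray}$ into the Orr-Sommerfeld operator produces the error $i\eps(\pa_Y^2-\alpha^2)^2\varphi_{slow,Ray}$, and using the factorization $(\pa_Y^2-\alpha^2)\varphi=\frac{1}{U_s}Ray[\varphi]+\frac{U_s''}{U_s}\varphi$ reduces this error (modulo the Rayleigh operator) to $i\eps(\pa_Y^2-\alpha^2)\frac{U_s''}{U_s}\varphi_{slow,Ray}$. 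I would then absorb this through the identity $OS=(\pa_Y^2-\alpha^2)Airy-2\pa_Y(U_s'\cdot)$ by setting $\phi_{slow}=\varphi_{slow,Ray}+\tilde\psi_{slow}+\tilde\Phi_{slow}$, where $\tilde\psi_{slow}\in H^2\cap H^1_0$ is the Dirichlet Airy solution with source $i\eps\frac{U_s''}{U_s}\varphi_{slow,Ray}$ and $\tilde\Phi_{slow}\in H^4\cap H^1_0$ is the modified Orr-Sommerfeld solution from Corollary \ref{cor.prop.mOS.2} with source $2\pa_Y(U_s'\tilde\psi_{slow})$.

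For the $H^1$ estimate I would apply \eqref{est.prop.Airy.4} of Proposition \ref{prop.Airy} (writing the Airy source in the form $(Yg)/Y$ to exploit the extra decay of $U_s''$), obtaining $\|\pa_Y\tilde\psi_{slow}\|+\alpha\|\tilde\psi_{slow}\|\le C\|YU_s''\varphi_{slow,Ray}/U_s\|\le C$. The crucial observation for the modified OS step is that $\tilde\psi_{slow}\in H^1_0(\R_+)$ with sufficient decay yields $\int_0^\infty\pa_Y(U_s'\tilde\psi_{slow})\,dY=0$, so the potentially singular factor $\alpha^{-1}|\int f|$ in Corollary \ref{cor.prop.mOS.2} vanishes, and we obtain $\|\pa_Y\tilde\Phi_{slow}\|+\alpha\|\tilde\Phi_{slow}\|\le C\|(1+Y)^2\pa_Y(U_s'\tilde\psi_{slow})\|\le C\|\pa_Y\tilde\psi_{slow}\|\le C$. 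Since all three pieces $\tilde\varphi_{slow,Ray}$, $\tilde\psi_{slow}$, $\tilde\Phi_{slow}$ lie in $H^1_0$, the remainder $\phi_{slow,re}$ vanishes at $Y=0$, which gives $\phi_{slow}(0)=1$ and \eqref{est.prop.slow.large.1}.

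For \eqref{est.prop.slow.large.3} the decisive simplification at $\alpha\ge 1$ is that $(\pa_Y^2-\alpha^2)e^{-\alpha Y}=0$, so $\Psi_{slow}:=(\pa_Y^2-\alpha^2)\phi_{slow,re}=(\pa_Y^2-\alpha^2)\phi_{slow}$; this function satisfies the zero-Neumann Airy problem $Airy[\Psi_{slow}]=U_s''\phi_{slow}$ derived from the weak formulation \eqref{proof.slow.1} (as is done to reach \eqref{proof.slow.3}). Applying the same interpolation argument used in Corollary \ref{cor.prop.mOS.2} together with the Hardy inequality $\|U_s''\phi_{slow}\|\le C\|\pa_Y\phi_{slow}\|$ and the $H^1$ bound already obtained, I would conclude $\|\Psi_{slow}\|\le C\eps^{-1/3}$. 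The $L^\infty$ bound \eqref{est.prop.slow.large.2} then follows by interpolation $\|\pa_Y\phi_{slow,re}\|_{L^\infty}\le C\|\pa_Y\phi_{slow,re}\|^{1/2}\|(\pa_Y^2-\alpha^2)\phi_{slow,re}\|^{1/2}\le C\eps^{-1/6}$, after handling the $\alpha^2\phi_{slow,re}$ contribution by \eqref{est.prop.slow.large.1}. Finally $\pa_Y\phi_{slow}(0)=-\alpha+\pa_Y\phi_{slow,re}(0)$ obeys $|\pa_Y\phi_{slow}(0)|\le\alpha+C\eps^{-1/6}$.

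The main technical point is the $H^2$ bound: controlling the second derivative of $\phi_{slow,re}$ with no loss in $\alpha$ relies crucially on the identity $(\pa_Y^2-\alpha^2)e^{-\alpha Y}=0$, so that the $H^2$ norm is truly governed by the Neumann Airy equation satisfied by $(\pa_Y^2-\alpha^2)\phi_{slow}$ via \eqref{proof.slow.1}; compared to the small-$\alpha$ regime treated in Proposition \ref{prop.slow.small}, this is much cleaner, as no delicate cancellation with the leading boundary profile $\frac{c_E}{\alpha}U_se^{-\alpha Y}$ is needed. Throughout, the hypothesis $\alpha\eps^{1/3}\le 1$ is used only to invoke Corollary \ref{cor.prop.mOS.2} for $\tilde\Phi_{slow}$.
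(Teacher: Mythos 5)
Your construction and estimates match the paper's proof in Subsection \ref{subsubsec.slow.2} step for step: the same ansatz $\phi_{slow}=\varphi_{slow,Ray}+\psi_{slow}+\Phi_{slow}$, the same use of \eqref{est.prop.Airy.4}, Corollary \ref{cor.prop.mOS.2}, the identity $(\pa_Y^2-\alpha^2)e^{-\alpha Y}=0$, the zero-Neumann Airy equation for $\Psi_{slow}$ via \eqref{proof.slow.1}--\eqref{proof.slow.3}, and the final interpolation for the $L^\infty$ bound. One small misattribution: you invoke the zero-average property $\int_0^\infty\pa_Y(U_s'\psi_{slow})\,dY=0$ to kill the $\alpha^{-1}|\int f|$ factor, but for $\alpha\ge 1$ Corollary \ref{cor.prop.mOS.2}(i) has no such factor at all, so this observation is unnecessary here (though harmless); it only matters in the complementary regime $0<\alpha\le1$ of Proposition \ref{prop.slow.small}.
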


\subsection{Fast mode}\label{subsec.fast}

In this subsection we construct a solution $\phi_{fast}$ to 
\begin{align}\label{eq.OS_fast}
\begin{cases}
& OS [\phi] =   0 \,, \qquad Y>0\,,\\
& { \phi|_{Y=0} = O(1)} \,,
\end{cases}
\end{align}
possessing the boundary layer structure, called the fast mode.
To this end we first aim to construct the approximate solution to the problem 
\begin{align}\label{eq.Airy_b}
\begin{cases}
& Airy[\psi] =  0 \,, \qquad Y>0\,,\\
& { \psi|_{Y=0} = O(1)} \,,
\end{cases}
\end{align}
possessing the boundary layer structure.
To this end we set 
\begin{align*}
\lambda = \frac{i U_s'(0)}{\eps}\,.
\end{align*}
Let $\Gamma(z)$ be the Gamma function and let ${\rm Ai} (z)$ be the Airy function which solves the equation $\frac{d^2 {\rm Ai}}{d z^2}  - z {\rm Ai}  =0$ for $z\in \C$ and satisfies ${\rm Ai} (0) = \frac{1}{3^\frac23 \Gamma (\frac23)}$, ${\rm Ai}'(0)=-\frac{1}{3^\frac13\Gamma (\frac13)}$, and 
\begin{align}\label{eq.airy.est}
{\rm Ai} (z) \sim z^{-\frac14} e^{-\frac23 z^\frac32}\,, \qquad |z|\gg 1\,, \quad |\arg z| \leq \pi-\theta\,, ~ \theta\in (0,\pi)\,.
\end{align}
See \cite[Chapter 10]{AbSt1964} for details about the Airy function.
Then we set 
\begin{align}\label{def.psi_0.f}
\psi_{fast,0} (Y ) =   {\rm Ai} (\lambda^\frac13 Y)\,, \qquad Y>0\,.
\end{align}
Here $\lambda^\frac13 = (\frac{U_s'(0)}{\eps})^\frac13 e^{\frac{\pi}{6} i}$. 
Then $\psi_{fast,0}$ solves the equation 
\begin{align}\label{eq.psi_0.f}
\begin{cases}
& i \eps \pa_Y^2 \psi_{fast,0} + Y U_s'(0) \psi_{fast,0} =0\,, \qquad Y>0\,,\\
& \psi_{fast,0}|_{Y=0} = \frac{1}{3^\frac23 \Gamma (\frac23)}\,.
\end{cases}
\end{align}

\

Let $0<\eps^\frac13 \alpha \leq 1$. Now we set 
\begin{align}\label{proof.prop.fast.1} 
\begin{split}
\phi_{app,fast} (Y) & =C_{\eps,\alpha} \eps^{-\frac23} \int_Y^\infty e^{\alpha (Y-Y')} \int_{Y'}^\infty e^{\alpha (Y''-Y')} \psi_{0,fast} (Y'') \, d Y''\, d Y' \,.
\end{split}
\end{align}
where the constant $C_{\eps,\alpha}$ is chosen so that:
\begin{align*} 
& \phi_{app,fast} (0)=1\quad  {\rm if}~~ \eps^{-\frac23} |\int_0^\infty e^{-\alpha Y'} \int_{Y'}^\infty e^{\alpha (Y''-Y')} \psi_{fast,0} (Y'') \, d Y''\, d Y'|\geq \frac{1}{10 \,  U_s'(0)^\frac23 3^\frac13 \Gamma (\frac13)}\\
& C_{\eps,\alpha} =1 \quad  {\rm if} ~~ \eps^{-\frac23} |\int_0^\infty e^{-\alpha Y'} \int_{Y'}^\infty e^{\alpha (Y''-Y')} \psi_{fast,0} (Y'') \, d Y''\, d Y' |<\frac{1}{10\, U_s'(0)^\frac23 3^\frac13 \Gamma (\frac13)}\,.
\end{align*}
This choice of $C_{\eps,\alpha}$ ensures the condition $|\phi_{app,fast} (0)|\leq 1+\frac{1}{10\, U_s'(0)^\frac23 3^\frac13 \Gamma (\frac13)}$. 

By \eqref{eq.airy.est}  and $0<\eps^\frac13 \alpha \leq 1$ the integral defining $\phi_{app,fast}$ converges absolutely when $0<\eps^\frac13 \alpha\leq 1$ and $\eps>0$ is small enough. Note that we may assume the smallness of $\eps$ by the condition $0<\eps \leq \eps_1$, where $\eps_1>0$ is the number in Proposition \ref{prop.mOS} (by taking $\eps_1$ even smaller if necessary).  
Note that {$0<|C_{\eps,\alpha}|\leq 1 + 10 \,  U_s'(0)^\frac23 3^\frac13 \Gamma (\frac13)$} by definition.
Moreover, from the boundary layer structure of $\psi_{fast,0}$ we can also show that 
\begin{align}\label{proof.prop.fast.2} 
{1 + 10 \,  U_s'(0)^\frac23 3^\frac13 \Gamma (\frac13)\geq |C_{\eps,\alpha}| \geq c>0\,,}
\end{align}
where $c$ is independent of $\eps$ and $\alpha$ when $\eps>0$ is small enough.
We see that $(\pa_Y^2-\alpha^2) \phi_{app,fast} = C_{\eps,\alpha} \eps^{-\frac23} \psi_{fast,0}$, and thus,
\begin{align*}
OS[\phi_{app,fast}] = C_{\eps,\alpha} \eps^{-\frac23} (U_s-U_s'(0) Y) \psi_{fast,0}  - i\eps \alpha^2 C_{\eps,\alpha} \eps^{-\frac23}  \psi_{fast,0}  -U_s''  \phi_{app,fast}   \,.
\end{align*}
The fast mode $\phi_{fast}$ is then constructed in the form $\phi_{fast} = \phi_{app,fast}+ \tilde \phi_{fast}$. To construct $\tilde \phi_{fast}$ we first solve 
\begin{align*}
OS_0 [\tilde \phi_{fast,1}] & = - C_{\eps,\alpha} \eps^{-\frac23} (U_s-U_s'(0) Y) \psi_{fast,0}  +i  C_{\eps,\alpha} \eps^{\frac13}  \alpha^2 \psi_{fast,0}   + U_s''  \phi_{app,fast}  \,, \\
\tilde \phi_{fast,1}|_{Y=0} & = \pa_Y \tilde \phi_{fast,1} |_{Y=0} =0 \,,
\end{align*}
where $OS_0 [\phi] = \pa_Y (U_s \pa_Y \phi ) - \alpha^2 {U_s} \phi + i \eps (\pa_Y^2-\alpha^2)^2 \phi$.
Note that we have the relation $OS=OS_0  - \pa_Y (U_s' \cdot)$.
The advantage of introducing $OS_0$ is as follows:

(i) $OS_0$ has a good symmetry so that it is easy to solve by a simple energy method

(ii) the new error term $-\pa_Y (U_s' \tilde \phi_{fast,1})$ has zero average and also vanishes on the boundary in virtue of the boundary condition imposed on $\tilde \phi_{fast,1}$.

With this in mind let us set $\tilde \phi_{fast,2}$ as the solution to \eqref{eq.mOS} with the source term $f= \pa_Y (U_s' \tilde \phi_{fast,1})$, for which  Corollary \ref{cor.prop.mOS.1} can be applied.
Then $\phi_{fast} = \phi_{app,fast} + \tilde \phi_{fast,1}+ \tilde \phi_{fast,2}$ is our fast mode.
The main result of this section is stated as follows.
\begin{proposition}\label{prop.fast} Let $\eps_1>0$ be the number in Proposition \ref{prop.mOS}. There exists a positive number  $\delta_1$ such that if $0<\eps\leq \eps_1$ and $\eps^\frac13 \alpha\leq \delta_1$ then there exists a function $\phi_{fast}\in H^4 (\R_+)$ satisfying $OS[\phi_{fast}] =0$ and 
\begin{align}
\| \pa_Y \phi_{fast} \| + \alpha \| \phi_{fast} \| & \leq \frac{C}{\eps^\frac16} \,,\label{est.prop.fast.1}\\
\| (\pa_Y^2 -\alpha^2)\phi_{fast} \| & \leq \frac{C}{\eps^\frac12}\,,\label{est.prop.fast.2}
\end{align}
and also {
\begin{align}
\phi_{fast} (0) & = 1 \,,\label{est.prop.fast.4}\\
\pa_Y \phi_{fast} (0) & = \Big ( e^{\frac{\pi}{6}i} U_s'(0)^\frac13 3^{-\frac23} \Gamma (\frac13)  + O (\eps^\frac13\alpha) + O (\eps^\frac13) \Big ) \eps^{-\frac13}\,.\label{est.prop.fast.5}
\end{align}}
\end{proposition}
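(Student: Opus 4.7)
The function $\phi_{fast}$ has already been built above as the sum $\phi_{app,fast} + \tilde\phi_{fast,1} + \tilde\phi_{fast,2}$; what remains is to establish the claimed estimates. My plan is to analyze the three pieces separately.

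The first step is to extract sharp information from the explicit integral formula \eqref{proof.prop.fast.1} for $\phi_{app,fast}$. Rescaling via $Z = (U_s'(0)/\eps)^{1/3}Y$ converts all relevant integrals into integrals against $\mathrm{Ai}(e^{i\pi/6}Z)$, and the super-exponential decay \eqref{eq.airy.est} combined with the classical identities $\int_0^\infty \mathrm{Ai}(z)\,dz = 1/3$ (valid along the rotated contour by deforming via Cauchy's theorem in the decay sector) and $\mathrm{Ai}'(0) = -1/(3^{1/3}\Gamma(1/3))$ yields $\|\psi_{fast,0}\| \sim \eps^{1/6}$. From this I get $\|(\pa_Y^2-\alpha^2)\phi_{app,fast}\| = |C_{\eps,\alpha}|\eps^{-2/3}\|\psi_{fast,0}\| \sim \eps^{-1/2}$ and, integrating the outer double integral in \eqref{proof.prop.fast.1}, $\|\pa_Y\phi_{app,fast}\| + \alpha\|\phi_{app,fast}\| \lesssim \eps^{-1/6}$. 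Differentiating at $Y=0$ produces
\begin{equation*}
\pa_Y\phi_{app,fast}(0) = -C_{\eps,\alpha}\eps^{-2/3}\int_0^\infty e^{\alpha Y''}\psi_{fast,0}(Y'')\,dY'' + \alpha\,\phi_{app,fast}(0),
\end{equation*}
and expanding $e^{\alpha Y''} = 1 + O(\alpha Y'')$ on the boundary layer of width $\eps^{1/3}$, together with the leading-order value of $C_{\eps,\alpha}$ dictated by the normalization $\phi_{app,fast}(0) = 1$ and computed from the analogous double-integral identity, gives precisely the expansion \eqref{est.prop.fast.5} with remainder $O(\eps^{1/3}\alpha) + O(\eps^{1/3})$.

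The second step handles $\tilde\phi_{fast,1}$ through energy estimates adapted to the self-adjoint principal part of $OS_0$. The boundary conditions $\tilde\phi_{fast,1}(0) = \pa_Y\tilde\phi_{fast,1}(0) = 0$ make the integration by parts clean, so pairing the equation with $\tilde\phi_{fast,1}$ and separating real and imaginary parts gives
\begin{align*}
\|\sqrt{U_s}\,\pa_Y\tilde\phi_{fast,1}\|^2 + \alpha^2\|\sqrt{U_s}\,\tilde\phi_{fast,1}\|^2 & = \mathrm{Re}\,\langle F,\tilde\phi_{fast,1}\rangle, \\
\eps\,\|(\pa_Y^2-\alpha^2)\tilde\phi_{fast,1}\|^2 & = -\mathrm{Im}\,\langle F,\tilde\phi_{fast,1}\rangle,
\end{align*}
where $F$ denotes the prescribed right-hand side. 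The key observation is that $U_s(Y) - U_s'(0)Y = O(Y^2)$ near zero, so the dominant piece $-C_{\eps,\alpha}\eps^{-2/3}(U_s - U_s'(0)Y)\psi_{fast,0}$ gains two powers of the boundary-layer scale $\eps^{1/3}$ and has $L^2$ size $\eps^{-2/3}\cdot \eps^{5/6} = \eps^{1/6}$; the remaining terms in $F$ are of comparable or smaller magnitude under the hypothesis $\eps^{1/3}\alpha \le \delta_1$. Combining these identities with the interpolation inequality \eqref{proof.prop.Airy.5} to bridge $\|\sqrt{U_s}\cdot\|$ and $\|\cdot\|$ shows that $\tilde\phi_{fast,1}$ is strictly subdominant to $\phi_{app,fast}$ in all the norms appearing in \eqref{est.prop.fast.1}--\eqref{est.prop.fast.2}.

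Finally, the source $\pa_Y(U_s'\tilde\phi_{fast,1})$ for $\tilde\phi_{fast,2}$ has zero average, since $\tilde\phi_{fast,1}$ vanishes both at $Y=0$ (Dirichlet) and at infinity (through $H^1$ decay); this is precisely what permits invoking Corollary \ref{cor.prop.mOS.1} without the singular $1/\alpha$ contribution, yielding $H^2$ bounds on $\tilde\phi_{fast,2}$ controlled by $\|(1+Y)^2\pa_Y(U_s'\tilde\phi_{fast,1})\|$, which inherits the smallness from step two. Summing the three contributions proves \eqref{est.prop.fast.1}--\eqref{est.prop.fast.2}. For the boundary values, $\tilde\phi_{fast,1}$ contributes nothing (satisfying both Dirichlet and Neumann conditions at zero) and $\tilde\phi_{fast,2}$ satisfies only Dirichlet, so $\phi_{fast}(0) = \phi_{app,fast}(0)$ and $\pa_Y\phi_{fast}(0) = \pa_Y\phi_{app,fast}(0) + \pa_Y\tilde\phi_{fast,2}(0)$. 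The main obstacle, requiring the most care, is bounding $|\pa_Y\tilde\phi_{fast,2}(0)|$ so that it is strictly $o(\eps^{-1/3})$: this is obtained via an $L^\infty$ trace estimate interpolating between the $H^1$ and $H^2$ bounds of Corollary \ref{cor.prop.mOS.1}, leveraging the sub-leading magnitude of $\tilde\phi_{fast,1}$. A final rescaling by $\phi_{fast}(0)$, guaranteed bounded above and below by \eqref{proof.prop.fast.2}, normalizes the Dirichlet trace to $1$ and yields \eqref{est.prop.fast.4}.
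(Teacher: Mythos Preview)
Your overall architecture is correct and matches the paper, but there is a genuine gap in Step~2 that breaks the argument when $\alpha$ is small. You bound the source $F$ only in $L^2$, obtaining $\|F\|\sim\eps^{1/6}$, and then feed this into the energy identities for $OS_0$. The trouble is that the pairing $\langle F,\tilde\phi_{fast,1}\rangle$ in those identities can only be controlled by $\|F\|\,\|\tilde\phi_{fast,1}\|\le\alpha^{-1}\|F\|\,E^{1/2}$, where $E=\|\pa_Y\tilde\phi_{fast,1}\|^2+\alpha^2\|\tilde\phi_{fast,1}\|^2$; running the interpolation argument of Proposition~\ref{prop.OS_0} then yields $E^{1/2}\lesssim\eps^{-1/3}\alpha^{-1}\|F\|\sim\eps^{-1/6}\alpha^{-1}$. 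For $\alpha\ll 1$ this is \emph{not} subdominant to $\|\pa_Y\phi_{app,fast}\|\sim\eps^{-1/6}$, so the claimed hierarchy collapses and \eqref{est.prop.fast.1} does not follow.

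The paper avoids this by writing $F=-\tfrac{i}{\alpha}\pa_Y f_1$ with $f_1=-\tfrac{\alpha}{i}\sigma[F]$, so that in the estimate of Proposition~\ref{prop.OS_0} the factor $\alpha^{-1}$ is cancelled by the explicit $\alpha$ in $f_1$, and $\|f_1\|\le C\alpha\|YF\|$ by Proposition~\ref{prop.pre.op}. Equivalently, you can bound the pairing directly via Hardy: $|\langle F,\tilde\phi_{fast,1}\rangle|\le\|YF\|\,\|\tilde\phi_{fast,1}/Y\|\le 2\|YF\|\,\|\pa_Y\tilde\phi_{fast,1}\|$, which removes the $\alpha^{-1}$. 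Either way the crucial input is $\|YF\|$, not $\|F\|$: the extra weight $Y$ gains one more power of the boundary-layer scale $\eps^{1/3}$ (since $\|Y^3\psi_{fast,0}\|\sim\eps^{7/6}$), yielding $E^{1/2}\lesssim\eps^{-1/3}\|YF\|\sim\eps^{1/6}$ uniformly in $\alpha$. This is the missing idea. A minor additional point: once $\eps^{1/3}\alpha\le\delta_1$ is small the normalization $\phi_{app,fast}(0)=1$ already holds exactly, and since $\tilde\phi_{fast,1}(0)=\tilde\phi_{fast,2}(0)=0$ by construction, no final rescaling is needed for \eqref{est.prop.fast.4}.
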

 
\begin{proof}
As stated above, we construct $\phi_{fast}$ of the form $\phi_{fast} = \phi_{app,fast} + \tilde \phi_{fast,1} + \tilde \phi_{fast,2}$, where $\phi_{app,fast}$ is given by 
\begin{align}\label{proof.prop.fast.5} 
\begin{split}
\phi_{app,fast} & = C_{\eps,\alpha} \eps^{-\frac23}  \int_Y^\infty e^{\alpha (Y-Y')} \int_{Y'}^\infty e^{\alpha (Y''-Y')} \psi_{fast,0} (Y'') \, d Y''\, d Y' \,.
\end{split}
\end{align}
From the definition of $\psi_{fast,0}$ in \eqref{def.psi_0.f} and the choice of $C_{\eps,\alpha}$, it is straightforward to see  
\begin{align}\label{proof.prop.fast.5'} 
\begin{split}
\| \pa_Y \phi_{app,fast} \| & \leq C \eps^{-\frac23} ( \alpha \eps^{\frac23+\frac16} + \eps^{\frac13+\frac16} )  = C ( \eps^\frac16 \alpha + \eps^{-\frac16} )\,,\\
\alpha \| \phi_{app,fast} \| & \leq C \eps^{-\frac23} \alpha \cdot \eps^{\frac23+\frac16} = C \alpha \eps^\frac16\,,\\
{ \| Y \phi_{app,fast} \|} & { \leq C \eps^{\frac13+\frac16} = C \eps^\frac12\,,}\\
\| (\pa_Y^2-\alpha^2 ) \phi_{app.fast} \| & \leq C \eps^{-\frac23 + \frac16} = C \eps^{-\frac12}\,,
\end{split}
\end{align}
{and} 
\begin{align*}
\pa_Y \phi_{app,fast} (0)  & = C_{\eps,\alpha} \eps^{-\frac23} \int_0^\infty e^{\alpha Y''} \psi_{0,fast} (Y'') \, d Y'' {\: + \:  \alpha \,  \phi_{app,fast} (0)}\\
& = C_{\eps,\alpha} \eps^{-\frac23}  \int_0^\infty e^{\alpha Y''}  {\rm Ai} (\lambda^\frac13 Y'') \, d Y'' {\: + \: O(\alpha)} \\
& = C_{\eps,\alpha}  \eps^{-\frac23} \lambda^{-\frac13} \int_0^\infty e^{\alpha \lambda^{-\frac13} Z} {\rm Ai} (Z) \, d Z\, {\: + \: O(\alpha)} .
\end{align*} 
Then let us recall that $\lambda^\frac13 = (\frac{U_s'(0)}{\eps})^\frac13 e^{\frac{\pi}{6}i}$ and $\int_0^\infty {\rm Ai} (z) d z = \frac13$.
Hence, if $\eps^\frac13 \alpha$ is small enough then the integral $\int_0^\infty e^{\alpha \lambda^{-\frac13} Z} {\rm Ai} (Z) \, d Z$ is away from zero uniformly and we have a lower bound of the form 
\begin{align}\label{proof.prop.fast.6} 
| \pa_Y \phi_{app,fast} (0) | \geq \frac{1}{C \eps^\frac13}\,,
\end{align}
where $C>0$ is uniform in $\eps$ and $\alpha$ as long as $\eps^\frac13 \alpha$ is small enough.
{In fact, we can show the expansions as in \eqref{est.prop.fast.4} and \eqref{est.prop.fast.5} as follows. By using $e^x-1\leq x e^x$ for $x\geq 0$ and the boundary layer structure of $\psi_{fast,0}$, we have 
\begin{align*}
\int_0^\infty { e^{-\alpha Y'}}\int_{Y'}^\infty e^{\alpha (Y''-Y')} \psi_{fast,0} (Y'') \, d Y''\, d Y' & = \int_0^\infty \int_{Y'}^\infty \psi_{fast,0} (Y'') \, d Y''\, d Y'  + O ({\eps} \alpha) \\
&  = \int_0^\infty Y \psi_{fast,0} (Y) \, d Y + O ({\eps} \alpha )\\
& = \int_0^\infty Y Ai (\lambda^\frac13 Y)\, d Y + O ({\eps} \alpha ) \\
& =\lambda^{-\frac23} \int_0^\infty Z Ai (Z) \, d Z + O ({\eps} \alpha ) \\
& = \lambda^{-\frac23} \int_0^\infty \pa_Z^2 Ai (Z) \, d Z + O ({\eps} \alpha ) \\
& = -\lambda^{-\frac23}  \pa_Z Ai (0) +  O ({\eps} \alpha ) \\
& = \eps^\frac23 e^{-\frac{\pi}{3}i}  \frac{1}{U_s'(0)^\frac23} \frac{1}{3^\frac13 \Gamma (\frac13)} + O ({\eps} \alpha ) \,.
\end{align*} 
Hence, {if $0 < \eps^\frac13 \alpha \le \delta_1$ is small enough}, then the constant $C_{\eps,\alpha}$ satisfies 
\begin{align*}
C_{\eps,\alpha} & = \Big ( \eps^{-\frac23} \int_0^\infty e^{-\alpha Y')}\int_{Y'}^\infty e^{\alpha (Y''-Y')} \psi_{fast,0} (Y'') \, d Y''\, d Y' \Big )^{-1} \\
& = e^{\frac{\pi}{3}i}  U_s'(0)^\frac23 3^\frac13 \Gamma (\frac13) + {O (\alpha \eps^\frac13)}\,,
\end{align*}
and thus we also have 
\begin{align}\label{proof.prop.fast.6.1} 
\phi_{app,fast} (0) = 1\,, 
\end{align}
and also 
\begin{align}\label{proof.prop.fast.6.2} 
\pa_Y \phi_{app,fast} (0)   & = C_{\eps,\alpha} \eps^{-\frac23} \lambda^{-\frac13} \Big ( \int_0^\infty Ai (Z) \, d Z + O (\alpha \lambda^{-\frac13} ) \Big ) + O (\alpha ) \nonumber \\
& = \Big ( e^{\frac{\pi}{3}i}  U_s'(0)^\frac23 3^\frac13 \Gamma (\frac13) + O (\alpha {\eps^\frac13}) \Big ) \eps^{-\frac13} U_s'(0)^{-\frac13} e^{-\frac{\pi}{6}i} \Big ( \frac13 + O (\alpha \lambda^{-\frac13} ) \Big )  + O(\alpha)  \nonumber \\
& = \Big ( e^{\frac{\pi}{6}i} U_s'(0)^\frac13 3^{-\frac23} \Gamma (\frac13)  + O (\alpha {\eps^\frac13} ) \Big ) \eps^{-\frac13}\,.
\end{align}
In particular, we have $\phi_{fast} (0)=1$, for $\phi_{fast} = \phi_{app,fast} + \tilde \phi_{fast}$ and $\tilde \phi_{fast} (0)=0$ by its construction.
Next let us turn to the estimate of $\tilde \phi_{fast,1}$.
The idea is to apply Proposition \ref{prop.OS_0} below, with $f_2=0$ and 
\begin{align*}
f_1 = -\frac{\alpha}{i} \sigma \Big [ - C_{\eps,\alpha} \eps^{-\frac23} (U_s-U_s'(0) Y) \psi_{fast,0}  +i  C_{\eps,\alpha} \eps^{\frac13}  \alpha^2 \psi_{fast,0}   + U_s''  \phi_{app,fast} \Big ]
\end{align*}
so that  
\begin{align*}
-\frac{i}{\alpha} \pa_Y f_1= - C_{\eps,\alpha} \eps^{-\frac23} (U_s-U_s'(0) Y) \psi_{fast,0}  +i  C_{\eps,\alpha} \eps^{\frac13}  \alpha^2 \psi_{fast,0}   + U_s''  \phi_{app,fast} \,.
\end{align*}
Thus we have from Proposition \ref{prop.OS_0} and also from Proposition \ref{prop.pre.op} (1),
\begin{align*}
& \| \pa_Y \tilde \phi_{fast,1} \| + \alpha \| \tilde \phi_{fast,1} \| \\
& \leq \frac{C}{\epsilon^\frac13\alpha } \| \frac{\alpha}{i} \sigma \Big [ - C_{\eps,\alpha} \eps^{-\frac23} (U_s-U_s'(0) Y) \psi_{fast,0}  +i  C_{\eps,\alpha} \eps^{\frac13}  \alpha^2 \psi_{fast,0}   + U_s''  \phi_{app,fast} \Big ] \| \\
& \leq \frac{C}{\epsilon^\frac13} \|  Y\Big ( - C_{\eps,\alpha} \eps^{-\frac23} (U_s-U_s'(0) Y) \psi_{fast,0}  +i  C_{\eps,\alpha} \eps^{\frac13}  \alpha^2 \psi_{fast,0}   + U_s''  \phi_{app,fast} \Big ) \| \\
& \leq \frac{C}{\eps^\frac13} \big ( \eps^{-\frac23} \| Y^3 \psi_{fast,0} \| + \eps^\frac13 \alpha^2 \| Y \psi_{fast,0} \| + \| Y \phi_{app,fast} \| \big )\,.
\end{align*}  
Hence,} the boundary layer structure of $\psi_{fast,0}$ and $\phi_{app,fast}$ {(e.g., see \eqref{proof.prop.fast.5'})} implies 
\begin{align}\label{proof.prop.fast.7} 
\| \pa_Y \tilde \phi_{fast,1} \| + \alpha \| \tilde \phi_{fast,1} \| \leq C \eps^\frac16 + C \eps^\frac12 \alpha^2\,.
\end{align}
Similarly, we have the $H^2$ bound such as 
\begin{align}\label{proof.prop.fast.8} 
\| (\pa_Y^2-\alpha^2) \tilde \phi_{fast,1} \| \leq \frac{C}{\eps^\frac13} (\eps^\frac16+\eps^\frac12 \alpha^2) \,.
\end{align} 
Finally let us estimate $\tilde \phi_{fast,2}$, which is the solution to \eqref{eq.mOS} with the source term $f= \pa_Y (U_s' \tilde \phi_{fast,1})$.
Corollary \ref{cor.prop.mOS.1} implies that 
\begin{align}\label{proof.prop.fast.3} 
& \| \pa_Y \tilde \phi_{fast,2} \| + \alpha \| \tilde \phi_{fast,2} \| & \leq C \| (1+Y)^2 \pa_Y (U_s' \tilde \phi_{fast,1})\| \leq C \| \pa_Y \tilde \phi_{fast,1} \| \leq C \eps^\frac16 + C \eps^\frac12 \alpha^2\,,
\end{align}
and 
\begin{align}\label{proof.prop.fast.3'} 
\| (\pa_Y^2-\alpha^2) \tilde \phi_{fast,2} \| & \leq C \| (1+Y)^2 \pa_Y (U_s' \tilde \phi_{fast,1})\|  + C \| \frac{\pa_Y (U_s' \tilde \phi_{fast,1})}{U_s}\| \nonumber \\
& \leq C \| \pa_Y \tilde \phi_{fast,1} \| + C \| \pa_Y^2\tilde \phi_{fast,1} \| \nonumber \\
& \leq \frac{C}{\eps^\frac13} (\eps^\frac16+\eps^\frac12 \alpha^2) \,.
\end{align}
Collecting \eqref{proof.prop.fast.7}, \eqref{proof.prop.fast.8}, and \eqref{proof.prop.fast.3'}, we have for $\tilde \phi_{fast} = \tilde \phi_{fast,1} + \tilde \phi_{fast,2}$,
\begin{align}\label{proof.prop.fast.4} 
\begin{split}
\| \pa_Y \tilde \phi_{fast} \| +\alpha \| \tilde \phi_{fast} \| & \leq C \eps^\frac16 + C \eps^\frac12 \alpha^2\,,\\
\| (\pa_Y^2-\alpha^2) \tilde \phi_{fast} \| & \leq \frac{C}{\eps^\frac13} (\eps^\frac16 + \eps^\frac12 \alpha^2)\,, \\
\| \pa_Y \tilde \phi_{fast} \|_{L^\infty} & \leq  \frac{C}{\eps^\frac16} (\eps^\frac16+\eps^\frac12 \alpha^2) \leq C ( 1+ \eps^\frac13 \alpha^2 )\,.
\end{split}
\end{align}
Then $\phi_{fast} =\phi_{app,fast}+ \tilde \phi_{fast}$ satisfies \eqref{est.prop.fast.1} and \eqref{est.prop.fast.2} when $0<\eps^\frac13 \alpha \leq \delta_1$, {and the expansions \eqref{est.prop.fast.4} and \eqref{est.prop.fast.5}  follow from \eqref{proof.prop.fast.6.1}, \eqref{proof.prop.fast.6.2}, and \eqref{proof.prop.fast.4}.} The proof is complete.
\end{proof}

\subsection{Proof of Theorem \ref{thm.OS} for $\eps^\frac13 \alpha \ll 1$}\label{subsec.proof.thm.OS}

In this subsection we prove Theorem \ref{thm.OS} when $\eps^\frac13 \alpha\ll 1$ and $0<\eps\leq \eps_1\ll 1$. As explained in the beginning of Subsection \ref{subsec.corrector}, we construct the solution $\phi$ to \eqref{eq.OS} of the form 
\begin{align*}
\phi = \Phi_{slip} [f] + a \phi_{slow} + b \phi_{fast}\,,
\end{align*}
where $\Phi_{slip}[f]$ is the solution to the modified Orr-Sommerfeld equation \eqref{eq.mOS} obtained in Proposition \ref{prop.mOS}, $\phi_{slow}$ and $\phi_{fast}$ are respectively the slow mode and the fast mode constructed in Propositions \ref{prop.slow.small}, \ref{prop.slow.large}, and \ref{prop.fast}. The coefficients $a$ and $b$ have to be chosen so that 
$$\tilde \phi = a \phi_{slow} + b \phi_{fast}$$
satisfies the boundary condition $\tilde \phi|_{Y=0}=0$, $\pa_Y \tilde \phi|_{Y=0} = - \pa_Y \Phi_{slip}[f]|_{Y=0}$, which is equivalent with the linear relation 
\begin{align*}
M 
\begin{pmatrix}
a\\
b
\end{pmatrix}
= 
\begin{pmatrix}
0\\
-\pa_Y \Phi_{slip}[f]|_{Y=0}
\end{pmatrix}\,,
\qquad 
M = 
\begin{pmatrix}
\phi_{slow}|_{Y=0} & \phi_{fast}|_{Y=0} \\
\pa_Y \phi_{slow}|_{Y=0} & \pa_Y \phi_{fast} |_{Y=0}
\end{pmatrix}\,.
\end{align*}
Hence we have 
\begin{align}\label{proof.thm.OS.1}
\begin{pmatrix}
a\\
b
\end{pmatrix}
& = \frac{1}{\det M} 
\begin{pmatrix}
\pa_Y \phi_{fast} |_{Y=0} & -\phi_{fast}|_{Y=0} \\
-\pa_Y \phi_{slow}|_{Y=0}  & \phi_{slow}|_{Y=0}
\end{pmatrix}
\begin{pmatrix}
0\\
-\pa_Y \Phi_{slip}[f]|_{Y=0}
\end{pmatrix} \nonumber \\
& = \frac{\pa_Y \Phi_{slip}[f]|_{Y=0}}{\det M}
\begin{pmatrix}
\phi_{fast}|_{Y=0}\\
-\phi_{slow}|_{Y=0}
\end{pmatrix}\,,
\qquad \det M = \Big (\phi_{slow} \pa_Y \phi_{fast} - \phi_{fast} \pa_Y \phi_{slow} \Big )|_{Y=0} \,.
\end{align}
From Propositions \ref{prop.slow.large} and \ref{prop.fast} we observe that 
\begin{align*}
|\det M| \geq \frac{1}{C\eps^\frac13} - C (\frac{1}{\eps^\frac16} + \alpha ) \geq \frac{1}{2C\eps^\frac13} \qquad {\rm if}~~ \eps^\frac13 \alpha \ll  1 ~~{\rm and}~~\alpha {\geq 1}\,.
\end{align*}
Hence, when $0<\eps^\frac13 \alpha\ll 1$ and $\alpha \geq 1$ it is easy to see from Proposition \ref{prop.slow.large} and Proposition \ref{prop.fast},
\begin{align} 
\| \pa_Y \tilde \phi \| + \alpha \| \tilde \phi \| & \leq C \eps^\frac13 |\pa_Y \Phi_{slip}[f]|_{Y=0} |\Big (  \|\pa_Y\phi_{slow} \| + \alpha \| \phi_{slow}  \| + \| \pa_Y \phi_{fast} \| + \alpha \| \phi_{fast} \| \Big ) \nonumber \\
& \leq C \eps^\frac13 |\pa_Y \Phi_{slip}[f]|_{Y=0} | \Big ( \alpha^\frac12 + \frac{1}{\eps^\frac16} \Big ) \nonumber \\
& \leq C \eps^\frac16 |\pa_Y \Phi_{slip}[f]|_{Y=0} | \,, \label{estim.tildephi.large1}
\end{align}
and similarly,
\begin{align}
\| (\pa_Y^2-\alpha^2) \tilde \phi \| & \leq C \eps^\frac13  |\pa_Y \Phi_{slip}[f]|_{Y=0} | \Big ( \| (\pa_Y^2-\alpha^2) \phi_{slow} \| + \| (\pa_Y^2-\alpha^2) \phi_{fast} \|\Big ) \nonumber \\
& \leq C \eps^\frac13  |\pa_Y \Phi_{slip}[f]|_{Y=0} | \Big({ \frac{1}{\eps^\frac13} + \frac{1}{\eps^\frac12}} \Big ) \nonumber \\
& \leq \frac{C}{\eps^\frac16}   |\pa_Y \Phi_{slip}[f]|_{Y=0} | \,. \label{estim.tildephi.large2}
\end{align}
{Combining the last two bounds with \eqref{bound.derivative.Phi.slip1}, we deduce the existence of a solution satisfying  \eqref{est.thm.OS.1}-\eqref{est.thm.OS.2}.}
If  $0 < \alpha \le {1}$ we use the asymptotic estimates \eqref{est.prop.slow.small.1}, \eqref{est.prop.slow.small.5}, \eqref{est.prop.fast.4}, and \eqref{est.prop.fast.5}, which yield
\begin{align*}
\det M & = \Big ( e^{\frac{\pi}{6}i} U_s'(0)^\frac13 3^{-\frac23} \Gamma (\frac13)  + {O (\eps^\frac13 \alpha ) + O(\eps^\frac13)} \Big ) \eps^{-\frac13} - \frac{c_E U_s'(0)}{\alpha}  + O (\frac{\eps^\frac{1}{12}}{\alpha} + \frac{1}{\eps^\frac14}) \\
& = \Big ( e^{\frac{\pi}{6}i} U_s'(0)^\frac13 3^{-\frac23} \Gamma (\frac13)  \frac{1}{\eps^\frac13} - \frac{c_E U_s'(0)}{\alpha} \Big ) \big ( 1 + o (1) \big ) \,,
\end{align*}
in the case  $0<\eps\leq \eps_1\ll 1$.
Hence we have the lower bound 
\begin{align}
|\det M| \geq \frac{1}{C} ( \frac{1}{\alpha} + \frac{1}{\eps^\frac13} )\,.
\end{align}
Thus, we have for $0<\eps^\frac13\alpha \ll 1$ and ${0 < \alpha \le 1}$, 
\begin{align}\label{proof.thm.OS.2}
\| \pa_Y \tilde \phi\| + \alpha \| \tilde \phi \| & \leq C \frac{\eps^\frac13\alpha}{\alpha+\eps^\frac13}  |\pa_Y \Phi_{slip}[f]|_{Y=0}| \bigg ( \| \pa_Y \phi_{slow} \| + \alpha \| \phi_{slow} \| + \| \pa_Y \phi_{fast} \| + \alpha \| \phi_{fast} \| \bigg ) \nonumber \\
& \leq C  \frac{\eps^\frac13\alpha}{\alpha+\eps^\frac13}   |\pa_Y \Phi_{slip}[f]|_{Y=0}| \bigg ( \frac{1}{\alpha} +  \frac{1}{\eps^\frac16} \bigg )  \,.
\end{align}
{Similarly, using \eqref{est.prop.slow.small.3}, \eqref{est.prop.fast.1}-\eqref{est.prop.fast.2}, we find
\begin{align}\label{proof.thm.OS.3-}
\| \pa_Y \tilde \phi\|_{L^\infty} & \leq C \frac{\eps^\frac13\alpha}{\alpha+\eps^\frac13}  |\pa_Y \Phi_{slip}[f]|_{Y=0}| \bigg ( \| \pa_Y \phi_{slow} \|_{L^\infty}  + \| \pa_Y \phi_{fast} \|_{L^\infty}  \bigg ) \nonumber \\
& \leq  C \frac{\eps^\frac13\alpha}{\alpha+\eps^\frac13}  \left( \frac{1}{\alpha} + C \left( \frac{\eps^{\frac{1}{12}}}{\alpha} + \frac{1}{\eps^{\frac14}} \right)+ \frac{1}{\eps^\frac13}\right) \nonumber  \\ 
& \leq C  |\pa_Y \Phi_{slip}[f]|_{Y=0}|\,. 
\end{align}}
As for the $H^2$ estimate, the similar argument shows 
\begin{align}\label{proof.thm.OS.3}
\| (\pa_Y^2 -\alpha^2) \tilde \phi \| & \leq  C \frac{\eps^\frac13\alpha}{\alpha+\eps^\frac13}  |\pa_Y \Phi_{slip}[f]|_{Y=0}| \bigg ( \| (\pa_Y^2 -\alpha^2) \phi_{slow} \| + \| (\pa_Y^2-\alpha^2 ) \phi_{fast} \| \bigg ) \nonumber \\
& \leq C \frac{\eps^\frac13\alpha}{\alpha+\eps^\frac13}  |\pa_Y \Phi_{slip}[f]|_{Y=0}|  \bigg ( \frac{1}{\alpha} +  \frac{1}{\eps^\frac16 \alpha} + \frac{1}{\eps^\frac13} +  \frac{1}{\eps^\frac12}  \bigg ) \nonumber \\
& \leq C \frac{\eps^\frac16\alpha}{\alpha+\eps^\frac13}  |\pa_Y \Phi_{slip}[f]|_{Y=0}|  \bigg ( \frac{1}{\alpha} + \frac{1}{\eps^\frac13}  \bigg )  \,.
\end{align}
{Combining with \eqref{bound.derivative.Phi.slip2}, we recover the existence of a solution with  the bounds  \eqref{est.thm.OS.3}-\eqref{est.thm.OS.4}-\eqref{est.thm.OS.5}.}

The uniqueness of the solution follows from the theory of ordinary differential equations.
Indeed, there exist four linearly independent solutions to the fourth order differential equation $OS[\phi]=0$, and two of which are taken as the slow mode and the fast mode constructed as above, while the other two grow as $Y\rightarrow \infty$. Hence, if $\phi$ is the solution to $OS[\phi]=0$ decaying as $Y\rightarrow \infty$ with $\phi|_{Y=0} =\pa_Y\phi|_{Y=0} =0$, then $\phi$ must be a linear combination of $\phi_{slow}$ and $\phi_{fast}$, and then, as we have seen that $\det M\ne 0$, $\phi$ must be trivial. The proof of Theorem \ref{thm.OS} is complete.

\subsection{Proof of Theorem \ref{thm.linear} for $|\tilde n| \leq \delta \nu^{-\frac34}$}\label{subsec.proof.linear}

In this section we prove Theorem \ref{thm.linear} for frequencies $\tilde n$ satisfying $0<|\tilde n| \nu^\frac34 \ll 1$. We may assume that $\tilde n>0$ and note that $0<\tilde n \nu^\frac34\ll 1$ corresponds to $0<\eps^\frac13 \alpha \ll 1$. We also assume that the source term $f_n$ belongs to $H^1(\R_+)^2$: the existence result  for the case $f_n\in L^2 (\R_+)^2$ follows from our uniform estimate and a standard density argument.
With this  in mind, we consider the Orr-Sommerfeld equations \eqref{eq.OS_start} with $f\in H^1(\R_+)^2$. These equations are expressed  in the rescaled variable $Y$, and are equivalent to the original problem \eqref{eq.linearized}.
An important point here is that the source term $-f_2-\frac{i}{\alpha} \pa_Y f_1$ is not compatible with a direct  application of Theorem \ref{thm.OS}. To fill this discrepancy we observe the identity
\begin{align}
OS[\phi]  & = \pa_Y (U_s \pa_Y \phi ) -\alpha^2 U_s \phi - U_s'\pa_Y \phi - U_s'' \phi + i \eps (\pa_Y^2-\alpha^2)^2 \phi \nonumber \\
& =   \pa_Y (U_s \pa_Y \phi ) -\alpha^2 U_s \phi + i \eps (\pa_Y^2-\alpha^2)^2 \phi \nonumber \\
& \quad - U_s'\pa_Y \phi - U_s''\phi   \nonumber \\
& = : OS_0[\phi] - U_s'\pa_Y \phi - U_s''\phi  \,,
\end{align}
and then we first consider the problem 
\begin{align}\label{eq.OS_0}
\begin{cases}
& \displaystyle OS_0 [\phi_0] = - f_2-\frac{i}{\alpha} \pa_Y f_1\,, \qquad Y>0\,,\\
& \phi_0|_{Y=0} = \pa_Y \phi_0 |_{Y=0} =0\,.
\end{cases}
\end{align}

\begin{proposition}\label{prop.OS_0} Let $\alpha>0$. Then for any $f=(f_1,f_2)\in H^1(\R_+)^2$ there exists a unique solution $\phi_0\in H^2_0 (\R_+)\cap H^4 (\R_+)$ to \eqref{eq.OS_0} such that 
\begin{align}
\| \pa_Y \phi_0 \| + \alpha \| \phi_0 \| & \leq  \frac{C}{\eps^\frac13 \alpha} \| f\| \,,\label{est.prop.OS_0.1}\\
\| (\pa_Y^2 -\alpha^2) \phi_0 \| & \leq \frac{C}{\eps^\frac23 \alpha} \| f\|\,.\label{est.prop.OS_0.2}
\end{align} 
\end{proposition}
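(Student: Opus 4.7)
My plan is to combine existence in $H^2_0(\R_+) := \{\phi \in H^2(\R_+) : \phi(0) = \pa_Y \phi(0) = 0\}$ via a variational argument with a closed a priori estimate obtained by testing against $\bar\phi$ and exploiting the interpolation inequality \eqref{proof.prop.Airy.5}.

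For existence, I would work with the continuous sesquilinear form
\[ a(\phi,\psi) = \langle U_s \pa_Y\phi, \pa_Y\psi\rangle + \alpha^2 \langle U_s \phi, \psi\rangle + i\eps \langle (\pa_Y^2-\alpha^2)\phi, (\pa_Y^2-\alpha^2)\psi\rangle \]
on $H^2_0(\R_+)$; the right-hand side $-f_2 - (i/\alpha)\pa_Y f_1$ defines via integration by parts a continuous antilinear form there (the IBP uses $\psi(0) = 0$). Since $\Re a(\phi,\phi) + \Im a(\phi,\phi) \geq \eps\|(\pa_Y^2 - \alpha^2)\phi\|^2$ is coercive on $H^2_0(\R_+)$ (using Hardy's inequality to recover the lower-order norms), the complex Lax-Milgram theorem produces a unique weak solution, which elliptic bootstrap with $f \in H^1$ lifts to $\phi \in H^4(\R_+)$.

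For the quantitative bounds, testing the equation against $\bar\phi$ and using $\phi(0) = \pa_Y\phi(0) = 0$ to kill every boundary term yields
\[ -\|\sqrt{U_s}\pa_Y\phi\|^2 - \alpha^2\|\sqrt{U_s}\phi\|^2 + i\eps\|(\pa_Y^2-\alpha^2)\phi\|^2 = -\langle f_2, \phi\rangle + \frac{i}{\alpha}\langle f_1, \pa_Y\phi\rangle. \]
Writing $S := \|f\|$, $N := \|\pa_Y\phi\| + \alpha\|\phi\|$, and $T := SN/\alpha$, the right-hand side is bounded by $T$, and splitting into real and imaginary parts yields
\[ \|\sqrt{U_s}\pa_Y\phi\|^2 + \alpha^2\|\sqrt{U_s}\phi\|^2 \leq T \qquad \text{and} \qquad \eps\|(\pa_Y^2-\alpha^2)\phi\|^2 \leq T. \]

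The closing step removes the $\sqrt{U_s}$ weights via \eqref{proof.prop.Airy.5}. Applied to $\phi$ using $\|\sqrt{U_s}\phi\| \leq T^{1/2}/\alpha$ and $\|\pa_Y\phi\| \leq N$, and then to $\pa_Y\phi$ using $\|\sqrt{U_s}\pa_Y\phi\| \leq T^{1/2}$ together with $\|\pa_Y^2\phi\| \leq \|(\pa_Y^2-\alpha^2)\phi\| + \alpha^2\|\phi\| \leq (T/\eps)^{1/2} + \alpha N$, straightforward algebra and the substitution $T = SN/\alpha$ combine to
\[ N^2 \leq C S^{2/3}N^{4/3} + \frac{CSN}{\eps^{1/3}\alpha}, \]
with all subdominant $CT$ contributions absorbed by the hypothesis $\eps^{1/3}\alpha \leq 1$. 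Dividing by $N$ and applying Young's inequality to absorb $CS^{2/3}N^{1/3}$ into $N/2$ gives $N \leq CS/(\eps^{1/3}\alpha)$, which is \eqref{est.prop.OS_0.1}; then \eqref{est.prop.OS_0.2} follows immediately from $\eps\|(\pa_Y^2-\alpha^2)\phi\|^2 \leq T = SN/\alpha \leq CS^2/(\eps^{1/3}\alpha^2)$. Uniqueness is automatic from the estimate with $f=0$. The main technical point will be the careful exponent bookkeeping confirming that every interpolation-generated term respects the target scaling under $\eps^{1/3}\alpha \leq 1$; the rest is standard.
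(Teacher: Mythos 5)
Your a priori estimate is essentially identical to the paper's: test $\eqref{eq.OS_0}$ against $\bar\phi_0$, split real and imaginary parts to control $\|\sqrt{U_s}\pa_Y\phi_0\|^2 + \alpha^2\|\sqrt{U_s}\phi_0\|^2$ and $\eps\|(\pa_Y^2-\alpha^2)\phi_0\|^2$ by $\|f\|E^{1/2}/\alpha$, then close via the interpolation inequality \eqref{proof.prop.Airy.5} applied to $\phi_0$ and $\pa_Y\phi_0$; the bookkeeping with $N$, $T$, $S$ is just the paper's $E$, $\frac{\|f\|}{\alpha}E^{1/2}$, $\|f\|$ in different notation. The only genuine divergence is the existence mechanism: you invoke Lax-Milgram directly on $H^2_0$, while the paper uses a continuity method with the regularized operator $OS_{0,l}=OS_0+il$ and passes $l\to 0$; both are standard and buy the same thing, with Lax-Milgram arguably more economical here since the form is coercive on $H^2_0$ with an $\alpha$-dependent constant (irrelevant for existence). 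Two very small bookkeeping points: your sesquilinear form should carry $-i\eps$ (not $+i\eps$) for $a(\phi,\psi)=F(\psi)$ to reproduce $OS_0$ rather than its conjugate, though this is harmless since only $|\Im a(\phi,\phi)|=\eps\|(\pa_Y^2-\alpha^2)\phi\|^2$ enters coercivity; and the absorption of the leftover $CT=CSN/\alpha$ term into $CSN/(\eps^{1/3}\alpha)$ uses $\eps\le 1$ (always true since $\eps=\kappa/n$ with $\kappa$ small and $|n|\ge 1$) rather than $\eps^{1/3}\alpha\le 1$, which is instead what absorbs the $+1$ after Young's inequality — the paper makes the same implicit use of $\eps^{1/3}\alpha\le 1$ despite stating the proposition for all $\alpha>0$.
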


\begin{proof} We focus on the a priori estimate.
For simplicity of notations we set 
\begin{align*}
E = \| \pa_Y\phi_0 \|^2 + \alpha^2 \| \phi_0 \|^2\,.
\end{align*}
By taking the inner product with $\phi_0$ in the equation \eqref{eq.OS_0} we obtain 
\begin{align*}
\| \sqrt{U_s} \pa_Y \phi_0 \|^2 + \alpha^2 \| \sqrt{U_s}\phi_0 \|^2 -i \eps \| (\pa_Y^2-\alpha^2) \phi_0 \| ^2 &  =  \langle f_2 + \frac{i}{\alpha} \pa_Y f_1, \phi_0 \rangle\,,
\end{align*}
and thus, the real part and the imaginary part of this identity give
\begin{align}\label{proof.prop.OS_0.1} 
\| \sqrt{U_s} \pa_Y \phi_0 \|^2 + \alpha^2 \| \sqrt{U_s}\phi_0 \|^2 = \Re  \langle f_2 + \frac{i}{\alpha} \pa_Y f_1, \phi_0 \rangle\,,
\end{align}
and 
\begin{align}\label{proof.prop.OS_0.2} 
\eps  \| (\pa_Y^2-\alpha^2) \phi_0 \| ^2 = - \Im 
\langle f_2 + \frac{i}{\alpha} \pa_Y f_1, \phi_0 \rangle\,.
\end{align}
Equality \eqref{proof.prop.OS_0.1} implies 
\begin{align}\label{proof.prop.OS_0.3} 
\| \sqrt{U_s} \pa_Y \phi_0 \|^2 + \alpha^2 \| \sqrt{U_s}\phi_0 \|^2  \leq \frac{2}{\alpha} \| f\|  E^\frac12\,.
\end{align} 
On the other hand, we have from \eqref{proof.prop.OS_0.2},
\begin{align}\label{proof.prop.OS_0.4} 
\eps \| (\pa_Y^2-\alpha^2) \phi_0 \| ^2  \leq 2 \frac{\| f\| }{\alpha}  E^\frac12 \,.
\end{align}
By using the interpolation inequality we have 
\begin{align*}
E & \leq C \| \sqrt{U_s} \pa_Y \phi_0 \|^\frac43 \| \pa_Y^2 \phi_0 \|^\frac23 + C \| \sqrt{U_s} \pa_Y \phi_0 \|^2 \\
& \quad + C \alpha^2 \| \sqrt{U_s} \phi_0\|^\frac43 \| \pa_Y \phi_0 \|^\frac23 + C \alpha^2 \| \sqrt{U_s} \phi_0 \|^2 \\
& \leq  C (\frac{\| f\| E^\frac12}{\alpha})^\frac23  \big ( \frac{1}{\eps} \frac{\| f\|}{\alpha}  E^\frac12 \big )^\frac13    + C \alpha^\frac23 (\frac{\| f\| E^\frac12}{\alpha})^\frac23 E^\frac13+ C\frac{\|f\|}{\alpha} E^\frac12\\
& \leq \frac{C}{\eps^\frac13 \alpha} \|f\| E^\frac{1}{2} + C \| f\|^\frac23  E^\frac23+  C\frac{\|f\|}{\alpha} E^\frac12  \,.
\end{align*}
This implies 
\begin{align}\label{proof.prop.OS_0.5}
E \leq C \big ( \frac{1}{\eps^\frac23 \alpha^2} + 1\big ) \| f\|^2 \leq \frac{C}{\eps^\frac23 \alpha^2}  \| f\|^2 \,.
\end{align}
Here we have used $0<\tilde n \leq \nu^{-\frac34}$. 
Estimate \eqref{est.prop.OS_0.1} is proved. Then we have from \eqref{proof.prop.OS_0.4},
\begin{align}\label{proof.prop.OS_0.6}
\| (\pa_Y^2-\alpha^2) \phi_0 \|^2  \leq \frac{C}{\eps \alpha}  \frac{1}{\eps^\frac13 \alpha}  \|f\|^2 
& \leq \frac{C}{\eps^\frac43 \alpha^2}  \| f\|^2\,,
\end{align}
which shows \eqref{est.prop.OS_0.2}.
We have proved the a priori estimates and the uniqueness.
The regularity $\phi_0\in H^4 (\R_+)$ follows from the elliptic regularity.
As for the existence, we introduce the operator $OS_{0,l}= OS_0+ i  l$ for a constant $l>0$, and then it is easy to see that $OS_{0,l}$ (under the same boundary condition as above) is invertible if $l$ is sufficiently large.
The argument above holds exactly in the same way even for $OS_{0,l}$ with $l>0$, and hence, the estimate is uniform in $l>0$. Then we can prove the existence of the solution for $l=0$ by the standard continuity method about $l$. The details are omitted here.
The proof is complete.
\end{proof}

\mspace
Let us prove Theorem \ref{thm.linear}. As mentioned in the beginning of this subsection, we first assume $f_n\in H^1 (\R_+)^2$.
The solution $\phi\in H^4 (\R_+)$ to \eqref{eq.OS_start} is constructed in the form $\phi=\phi_0 + \phi_1$, where $\phi_0$ is the solution to \eqref{eq.OS_0} obtained in Proposition \ref{prop.OS_0}, and $\phi_1$ is the solution to \eqref{eq.OS} with $f$ replaced by $U_s' \pa_Y \phi_0 + U_s''\phi_0 = \pa_Y (U_s' \phi_0)$, for which Theorem \ref{thm.OS} is applied without loss of $\alpha^{-1}$ when $\alpha$ is small, in virtue of the fact $\int_0^\infty \pa_Y (U_s' \phi_0) \, d Y =0$. 
Note that, in order to apply Theorem \ref{thm.OS}, we need the condition $0<\eps^\frac13 \alpha\leq \delta_0$, which is equivalent with $0<\tilde n\leq \delta_0^\frac32 \nu^{-\frac34}$. 
Let $\alpha\geq 1$ in addition, which is the case $\tilde n \geq  \nu^{-\frac12}$.
Theorem \ref{thm.OS} implies that 
{\begin{align}
\| \pa_Y \phi_1 \| + \alpha \| \phi_1 \|  & \leq  C \| (1+Y) \pa_Y (U_s' \phi_0) \|  \leq C \| \pa_Y \phi_0 \|  \,,\label{proof.thm.inear.1}\\
\| (\pa_Y^2-\alpha^2) \phi_1 \| & \leq \frac{C}{\eps^\frac13} \| (1+Y) \pa_Y ( U_s' \phi_0) \| \leq \frac{C}{\eps^\frac13} \| \pa_Y \phi_0 \| \,.\label{proof.thm.inear.1'}
\end{align}}
Therefore, Proposition \ref{prop.OS_0} and \eqref{proof.thm.inear.1}-\eqref{proof.thm.inear.1'} yield for $\phi=\phi_0+\phi_1$,
\begin{align}
\| \pa_Y \phi \| + \alpha \| \phi \| & \leq  \frac{C}{\eps^\frac13 \alpha} \| f\| \,,\label{proof.thm.inear.2}\\
\| (\pa_Y^2 -\alpha^2) \phi \| & \leq \frac{C}{\eps^\frac23 \alpha} \| f\|\,.\label{proof.thm.inear.2'}
\end{align} 
Rescaling back to the original variable and recalling \eqref{def.rescale}, we have from \eqref{proof.thm.inear.2},
\begin{align}\label{proof.thm.inear.3}
\| u_n \|_{L^2} \leq \frac{C}{\eps^\frac13 \tilde n} \| f_n \|_{L^2} \leq \frac{C}{\tilde n^\frac23} \| f_n \|_{L^2}\,,
\end{align}
and from \eqref{proof.thm.inear.2'},
\begin{align}\label{proof.thm.inear.4}
\| \pa_y u_n \|_{L^2} + \tilde n \| u_n \|_{L^2} \leq \frac{C}{\tilde n^\frac13 \nu^\frac12} \| f_n \|_{L^2}\,.
\end{align}
This proves \eqref{est.thm.linear.1} and \eqref{est.thm.linear.2} in the case $0<|\tilde n|\leq \delta_0^{\frac32}\nu^{-\frac34}$ and $\tilde n \geq  \nu^{-\frac12}$, where $\delta_0>0$ is the number in Theorem \ref{thm.OS}. 
Next we consider the case $0<\alpha\leq 1$, that is, $\tilde n \leq  \nu^{-\frac12}$.
In this case Theorem \ref{thm.OS} shows that 
\begin{align}
\| \pa_Y \phi_1 \| + \alpha \| \phi_1 \|  & \leq  C\frac{\alpha + \eps^\frac16}{\alpha + \eps^\frac13} \| (1+Y)^2 \pa_Y (U_s' \phi_0) \|  \leq C\frac{\alpha+\eps^\frac16}{\alpha + \eps^\frac13} \| \pa_Y \phi_0 \|  \,,\label{proof.thm.inear.5}\\
\| \pa_Y \phi_1 \|_{L^\infty} + \alpha \| \phi_1 \|_{L^\infty} & \leq  \frac{C}{\eps^\frac16} \| (1+Y)^2 \pa_Y (U_s' \phi_0) \|  \leq \frac{C}{\eps^\frac16} \| \pa_Y \phi_0 \|  \,,\label{proof.thm.inear.6}\\
\| (\pa_Y^2-\alpha^2) \phi_1 \| & \leq \frac{C}{\eps^\frac13} \| (1+Y)^2 \pa_Y ( U_s' \phi_0) \| \leq \frac{C}{\eps^\frac13} \| \pa_Y \phi_0 \| \,.\label{proof.thm.inear.7}
\end{align}
Hence, combining with Proposition \ref{prop.OS_0}, we obtain 
\begin{align}
\| \pa_Y \phi \| + \alpha \| \phi \| & \leq  C\frac{\alpha+\eps^\frac16}{(\alpha+\eps^\frac13) \eps^\frac13 \alpha} \| f\| \,,\label{proof.thm.inear.8}\\
\| \pa_Y \phi \|_{L^\infty} + \alpha \| \phi \|_{L^\infty} & \leq  \frac{C}{\eps^\frac12 \alpha} \| f\| \,,\label{proof.thm.inear.9}\\
\| (\pa_Y^2 -\alpha^2) \phi \| & \leq \frac{C}{\eps^\frac23 \alpha} \| f\|\,.\label{proof.thm.inear.10}
\end{align} 
In the original variable these estimates provide 
\begin{align}
\| u_n \|_{L^2} & \leq  C\frac{\tilde n\nu^\frac12 + \tilde n^{-\frac16}}{(\tilde n \nu^{\frac12} + \tilde n^{-\frac13})\tilde n^\frac23} \| f_n\|_{L^2} \,,\label{proof.thm.inear.8'}\\
\| u_n \|_{L^\infty} & \leq  \frac{C}{\tilde n^\frac12\nu^\frac14} \| f_n\|_{L^2} \,,\label{proof.thm.inear.9'}\\
\| \pa_y u_n\|_{L^2} + \tilde n \| u_n \|_{L^2} & \leq \frac{C}{\tilde n^\frac13 \nu^\frac12} \| f_n\|_{L^2}\,.\label{proof.thm.inear.10'}
\end{align} 
This proves \eqref{est.thm.linear.1'}, \eqref{est.thm.linear.2'}, and \eqref{est.thm.linear.3'},
by comparing the size of $\tilde n\nu^{\frac12}$ and of $\tilde n^{-\frac16}$ or $\tilde n^{-\frac13}$.
In virtue of the assumption $f_n\in  H^1(\R_+)^2$ we have the $H^3$ regularity of $u_n$. The bound of the $H^2$ norm of $u_n$ in terms of the $L^2$ norm of $f_n$ is then recovered from the elliptic regularity of the Stokes operator, for the $L^2$ norm of the term $-i\tilde n U_s^\nu u_n - u_{n,2}\pa_y U_s^\nu {\bf e}_1 + f_n$ in \eqref{eq.linearized} is bounded in terms of $\|f_n \|_{L^2}$ as already proved above. The details are omitted here. Hence, the existence and the estimates of the solution $u_n$ for the case $f_n\in L^2 (\R_+)^2$ follow from a standard density argument. As for the uniqueness, if $u_n\in H^2(\R_+)^2 \cap H^1_0 (\R_+)^2$ is the solution to \eqref{eq.linearized} with $f_n=0$, then $u_n$ is smooth and the associated streamfunction $\phi$ in the rescaled variable satisfies \eqref{eq.OS} with $f=0$. Hence Theorem \ref{thm.OS} implies $\phi=0$, and thus, $u_n=0$.
The proof of Theorem \ref{thm.linear} is complete in the case $0<|\tilde n|\leq \delta_0^{\frac32}\nu^{-\frac34}$.

\section{Analysis in high frequency $|\tilde n| \geq \delta \nu^{-\frac34}$}\label{sec.high}

In this section we consider the linearized problem in the regime $|\tilde n|\geq \delta \nu^{-\frac34}$.  If $|\tilde n|\gg \nu^{-\frac34}$ then the problem is easy since the dissipation is strong enough. Hence we must at least handle the regime $|\tilde n|\sim O(\nu^{-\frac34})$, in which the strength of the boundary layer is the same order as the dissipation, and this fact leads to an essential difficulty in constructing the boundary layer corrector, in particular the lower bound of the fast mode, which was the key in the previous section. 

\mspace
To overcome this difficulty we go back to the energy argument for the velocity in the original variables and try to gain the coercive estimate from the convection $U_s^\nu \pa_x$. This will be achieved from the  imaginary part of the energy identity, rather than the positive part of it.
To be precise let us recall the problem 
\begin{align}\label{eq.linearized.high}
\begin{cases}
& i \tilde n U_s^\nu u_n + u_{n,2} (\pa_y U_s^\nu ) {\bf e_1}  - \nu (\pa_y^2 - \tilde  n^2) u_n +
\begin{pmatrix}
i \tilde n p_n\\
\pa_y p_n
\end{pmatrix}
=  f_n  \,, \qquad  y>0\,,\\
& i\tilde n u_{n,1} + \pa_y u_{n,2} =0\,, \qquad y>0\,,\\
& u_n |_{y=0} = 0\,.
\end{cases}
\end{align}
Here $u_n= (\pa_y\phi_n, -i\tilde n \phi_n)^\top$ with the streamfunction $\phi_n=\phi_n (y)$.
The following proposition is valid for the regime $|\tilde n | \nu^\frac12 \gg 1$.

\begin{proposition}\label{prop.high} There exists a positive number $\delta_2>0$ such that if $|\tilde n | \geq \delta_2^{-1} \nu^{-\frac12}$ then there exists a unique solution $u_n \in H^2(\R_+)^2 \cap H^1_0(\R_+)^2$ to \eqref{eq.linearized.high} satisfying the following estimates:

\noindent {\rm (i)} if $|\tilde n|\geq \delta_2^{-1} \nu^{-\frac34}$ then 
\begin{align}
\| u_n \|_{L^2} \leq \frac{C}{|\tilde n|^2 \nu}  \| f_n \|_{L^2}\,, \label{est.prop.high.1} \\
\| \pa_y u_n \|_{L^2} + |\tilde n | \| u_n \|_{L^2} \leq \frac{C}{|\tilde n| \nu}  \| f_n \|_{L^2}\,. \label{est.prop.high.2}
\end{align}

\noindent {\rm (ii)} if $\delta_2^{-1} \nu^{-\frac12} \leq |\tilde n| \leq \delta_2^{-1} \nu^{-\frac34}$ then 
\begin{align}
\| u_n \|_{L^2} \leq  \frac{C}{|\tilde n|^\frac45 \nu^\frac{1}{10}}  \| f_n \|_{L^2}\,,\label{est.prop.high.3}  \\
\| \pa_y u_n \|_{L^2}+ |\tilde n | \| u_n \|_{L^2} \leq  \frac{C}{|\tilde n|^\frac35 \nu^\frac{7}{10}}  \| f_n \|_{L^2}\,. \label{est.prop.high.4} 
\end{align}
\end{proposition}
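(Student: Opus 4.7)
The plan is to prove Proposition \ref{prop.high} via an $L^2$ energy estimate at the velocity level, exploiting \emph{both} the real and imaginary parts of the natural energy identity together with a boundary-layer interpolation. Taking the $L^2$ inner product of \eqref{eq.linearized.high} with $u_n$, the pressure contribution vanishes by divergence-free and no-slip. Separating real and imaginary parts yields
\begin{align*}
\nu (\|\pa_y u_n\|^2 + \tilde n^2 \|u_n\|^2) &= -\Re \int u_{n,2}\, \pa_y U_s^\nu\, \bar u_{n,1}\, d y + \Re\langle f_n, u_n\rangle, \\
\tilde n \|\sqrt{U_s^\nu}\, u_n\|^2 &= -\Im \int u_{n,2}\, \pa_y U_s^\nu\, \bar u_{n,1}\, dy + \Im\langle f_n, u_n\rangle.
\end{align*}
The imaginary identity is the crucial new ingredient absent from purely dissipative estimates: the convective term $i\tilde n U_s^\nu u_n$ generates a weighted $L^2$ control through $\|\sqrt{U_s^\nu} u_n\|$, which will substitute for the missing tangential diffusion in the range where $\nu \tilde n^2$ is not much larger than $\nu^{-1/2}$.

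The next step is to handle the stretching term. I use the divergence-free relation $\bar u_{n,1} = -(i/\tilde n) \pa_y \bar u_{n,2}$, which already produces a factor $1/|\tilde n|$, and then integrate by parts in $y$ (legitimate because $u_{n,2}(0)=0$). Decomposing $u_{n,2}\pa_y \bar u_{n,2}$ into its real and imaginary parts yields
\begin{equation*}
\int u_{n,2}\, \pa_y U_s^\nu\, \bar u_{n,1}\, d y \;=\; \frac{i}{2\tilde n}\int \pa_y^2 U_s^\nu\, |u_{n,2}|^2\, dy \;-\; \frac{1}{\tilde n}\int \pa_y U_s^\nu\, \Im(\bar u_{n,2}\pa_y u_{n,2})\, dy,
\end{equation*}
so that both real and imaginary parts of the stretching inherit the factor $1/|\tilde n|$. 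Combining this with the pointwise bound $|u_{n,2}(y)|^2 \le y\,\|\pa_y u_{n,2}\|^2$ (a consequence of $u_{n,2}(0)=0$) and the uniform integrals $\int y|\pa_y U_s^\nu|^2\, dy \le C$, $\int y|\pa_y^2 U_s^\nu|\, dy \le C$ (obtained by the change of variable $Y=y/\sqrt{\nu}$ and the decay assumption \eqref{assume.3}), I expect to prove the key bound
\begin{equation*}
|\Re(\text{stretching})| + |\Im(\text{stretching})| \;\le\; \frac{C}{|\tilde n|}\,\|\pa_y u_n\|^2.
\end{equation*}

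Finally, I close the estimates via the boundary-layer interpolation $\|u_n\|^2 \le C\nu\|\pa_y u_n\|^2 + C\|\sqrt{U_s^\nu} u_n\|^2$, obtained by splitting $\int|u_n|^2$ at $y = 2Y_0\sqrt{\nu}$, using Poincar\'e on $(0, 2Y_0\sqrt{\nu})$ and the lower bound $U_s^\nu \ge U_s(2Y_0) > 0$ outside. Plugging in the imaginary-part bound for $\|\sqrt{U_s^\nu} u_n\|^2$ and using the base hypothesis $|\tilde n|\ge \delta_2^{-1}\nu^{-1/2}$ (which gives $1/\tilde n^2 \le \delta_2^2 \nu$), I get $\|u_n\|^2 \le C\nu \|\pa_y u_n\|^2 + C\|f_n\|^2/\tilde n^2$ after one Young absorption. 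The real-part identity, combined with the stretching bound and this interpolated control, then provides a closed system for the pair $(\|\pa_y u_n\|,\|u_n\|)$. In case (i), $|\tilde n|\ge \delta_2^{-1}\nu^{-3/4}$, the diffusion term $\nu\tilde n^2\|u_n\|^2$ is strong enough relative to the stretching factor $C\|\pa_y u_n\|^2/|\tilde n|$ that the bootstrap yields \eqref{est.prop.high.1}--\eqref{est.prop.high.2}. In case (ii), $\delta_2^{-1}\nu^{-1/2} \le |\tilde n|\le \delta_2^{-1}\nu^{-3/4}$, the exponents $4/5$ and $3/5$ emerge from balancing the three competing quantities $\nu\tilde n^2\|u_n\|^2$, $\|\pa_y u_n\|^2/|\tilde n|$, and $\nu\|\pa_y u_n\|^2$. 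I expect the main obstacle to lie in this final bootstrap in case (ii): one must carefully optimize how the interpolation interlocks the real and imaginary information when neither alone provides coercive control. Once the a priori estimates are in place, existence and uniqueness follow by standard elliptic regularity and a continuation argument (deforming the problem through $Airy[\cdot] + i l$ for $l\to 0$, as in Section \ref{sec.Airy}).
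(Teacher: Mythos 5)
Your setup is on the right track and matches the paper's starting point: the real and imaginary parts of the velocity-level energy identity, the emergence of $\|\sqrt{U_s^\nu}\,u_n\|$ from the convection term, and the observation that this must be interpolated against $\|\pa_y u_n\|$ to recover full $L^2$ control. Your reformulation of the stretching term via the divergence-free relation $u_{n,1}=(i/\tilde n)\pa_y u_{n,2}$ and an integration by parts, yielding $|\Re| + |\Im|$ of the stretching $\le C|\tilde n|^{-1}\|\pa_y u_n\|^2$, is a correct computation (and an observation that is genuinely different from the paper's streamfunction-level manipulation, which keeps the weight $\sqrt{U_s^\nu}$ instead of converting to $\pa_y u_n$).

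The gap is in the closing step: the bound $C|\tilde n|^{-1}\|\pa_y u_n\|^2$ for the stretching must be absorbed on the left side of the real-part identity, and the only quantity there with a chance is $\nu\|\pa_y u_n\|^2$. That requires $C|\tilde n|^{-1}\le \tfrac12\nu$, i.e.\ $|\tilde n|\gtrsim \nu^{-1}$. But the whole regime of interest here is $|\tilde n|\lesssim \nu^{-3/4}\ll\nu^{-1}$, where $|\tilde n|^{-1}\gg\nu$, so the real-part identity gives no control at all on $\|\pa_y u_n\|$: the stretching term strictly dominates $\nu\|\pa_y u_n\|^2$, and the remaining pieces of your system ($\nu\tilde n^2\|u_n\|^2$ and $\|u_n\|^2\lesssim\nu\|\pa_y u_n\|^2+\|f_n\|^2/\tilde n^2$) never close the $\|\pa_y u_n\|$ loop. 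The remark that ``in case (i) the diffusion $\nu\tilde n^2\|u_n\|^2$ absorbs the stretching factor $C\|\pa_y u_n\|^2/|\tilde n|$'' is not a legitimate absorption, since the two sides involve incomparable quantities, and in case (ii) there is simply no coercive inequality left to run the balancing.

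For comparison, the paper avoids this trap in two different ways. In case (i) it uses the blunt bound $|\Re(\text{stretching})|\le\nu^{-1/2}\|\pa_Y U_s\|_{L^\infty}\|u_n\|^2$, which is absorbed by $\nu\tilde n^2\|u_n\|^2$ precisely when $\nu^{3/2}\tilde n^2$ is large, i.e.\ exactly in case (i). In case (ii) it needs both of the following, neither of which survives in your version: the full-strength interpolation $\|u_n\|^2\le C\nu^{1/3}\|\sqrt{U_s^\nu}u_n\|^{4/3}\|\pa_y u_n\|^{2/3}+C\|\sqrt{U_s^\nu}u_n\|^2$ (your additive $\nu\|\pa_y u_n\|^2+\|\sqrt{U_s^\nu}u_n\|^2$ is a strict weakening of this via Young and loses the fractional power of $\|\pa_y u_n\|$), and the asymmetric weighted-Hardy stretching bound $|\Re(\text{stretching})|\le C\nu^{-1/4}|\tilde n|^{1/2}\|u_n\|\,\|\sqrt{U_s^\nu}u_n\|$, in which $\sqrt{U_s^\nu}$ is placed on $u_{n,1}$ and $y^{-1/2}$ on $u_{n,2}$. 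Only with the $\|\pa_y u_n\|^{2/3}$ factor in the interpolation can one substitute the real-part identity, run the cubic Young peeling of $\|u_n\|^{1/3}$, and then close against the imaginary-part control of $\|\sqrt{U_s^\nu}u_n\|^2$; this is the mechanism producing the exponents $4/5$ and $1/10$. A stretching bound in terms of $\|\pa_y u_n\|^2$ alone, as you propose, cannot enter that scheme.
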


\begin{remark}{\rm When $|\tilde n| \sim \nu^{-\frac34}$ we have $|\tilde n|^{-\frac45} \nu^{-\frac{1}{10}} \sim |\tilde n|^{-\frac23}$ and $|\tilde n|^{-\frac35} \nu^{-\frac{7}{10}} \sim |\tilde n|^{-\frac13} \nu^{-\frac12}$.
}
\end{remark}

\begin{proof} We may assume that $\tilde n>0$.
We focus on the a priori estimate. By taking the inner product with $u_n$ in the first equation of \eqref{eq.linearized.high}, we have 
\begin{align}\label{proof.prop.high.1}
i \tilde n \langle U_s^\nu u_n, u_n \rangle + \langle u_{n,2} \pa_y U_s^\nu, u_{n,1}\rangle + \nu  ( \| \pa_y u_n \|^2_{L^2} + \tilde n^2 \| u_n \|^2_{L^2} ) = \langle f_n ,  u_n \rangle\,.
\end{align}
The real part of this identity gives 
\begin{align}\label{proof.prop.high.1'}
\nu  ( \| \pa_y u_n \|^2_{L^2} + \tilde n^2 \| u_n \|^2_{L^2} ) =  - \Re \langle u_{n,2} \pa_y U_s^\nu, u_{n,1}\rangle + \Re \langle f_n ,  u_n \rangle\,,
\end{align}
while the imaginary part of this identity gives 
\begin{align}\label{proof.prop.high.1''}
\tilde n \bigg ( \langle U_s^\nu u_n,u_n \rangle - \Re  \langle (\pa_y U_s^\nu) \phi_n, \pa_y \phi_n \rangle \bigg ) = \Im \langle f_n , u_n \rangle\,.
\end{align}
Below we take $\delta_2\in (0,1)$ small enough depending only on $U_s$.

\noindent (i) Case $\tilde n \geq \delta_2^{-1} \nu^{-\frac34}$: The first term in the right-hand side of \eqref{proof.prop.high.1'} is estimated as 
\begin{align*}
|- \Re \langle u_{n,2} \pa_y U_s^\nu, u_{n,1}\rangle| \leq \nu^{-\frac12} \| \pa_Y U_s \|_{L^\infty} \| u_n \|^2_{L^2} & = \nu \tilde n^2 \frac{\| \pa_Y U_s \|_{L^\infty} }{\nu^\frac32 \tilde n^2} \| u_n \|^2_{L^2}  \\
& \leq \frac{\nu\tilde n^2}{2}\| u_n \|^2_{L^2}
\end{align*}
if $\delta_2>0$ is small enough. Thus we have 
\begin{align*}
\nu ( \| \pa_y u_n \|^2_{L^2} + \tilde n^2 \| u_n \|^2_{L^2} )  \leq 2 \| f_n \|_{L^2}\, \| u_n \|_{L^2}\,,
\end{align*}
which proves \eqref{est.prop.high.1}-\eqref{est.prop.high.2}.

\noindent (ii) Case $\delta_2^{-1} \nu^{-\frac12} \leq \tilde n \leq \delta_2^{-1} \nu^{-\frac34}$:
We use the imaginary part of the energy identity \eqref{proof.prop.high.1''}, and then from the integration by parts,
\begin{align*}
\int_0^\infty U_s^\nu \big ( |\pa_y \phi_n|^2 + \tilde n^2 | \phi_n |^2 \big ) d y + \frac12 \int_0^\infty (\pa_y^2 U_s^\nu) |\phi_n|^2 d y  = \frac{1}{\tilde n} \Im \langle f_n, u_n \rangle
\end{align*}
To estimate the second term in the left-hand side of this equality, we use the condition $|\pa_y^2 U_s^\nu|\leq C \nu^{-\frac12}  \pa_y U_s^\nu$ for $0\leq y\leq 2 Y_0\nu^\frac12$, see Proposition \ref{prop.pre}. 
Let $\chi (Y)$ be a cut-off such that $\chi(Y)=1$ for $0\leq Y\leq Y_0$ and $\chi(Y)=0$ for $Y \geq 2Y_0$. Then we have
\begin{align*}
\big |  \frac12 \int_0^\infty (\pa_y^2 U_s^\nu) |\phi_n|^2 d y \big | & \leq \frac{C}{2} \nu^{-\frac12} \int_0^\infty \pa_y U_s^\nu \chi (\frac{y}{\sqrt{\nu}}) |\phi_n|^2 d y + C \nu^{-1} \int_{Y_0\nu^\frac12}^\infty |\phi_n|^2 d y \\
& = - \frac{C}{2} \nu^{-\frac12} \int_0^\infty  U_s^\nu \pa_y \Big ( \chi (\frac{y}{\sqrt{\nu}}) |\phi_n|^2 \Big ) d y + C \nu^{-1} \int_{Y_0\nu^\frac12}^\infty |\phi_n|^2 d y \\
& \leq C \nu^{-1} \int_{Y_0\nu^\frac12}^\infty U_s^\nu |\phi_n|^2 d y - C \nu^{-\frac12}  \Re  \int_0^\infty U_s^\nu \chi (\frac{y}{\sqrt{\nu}}) \pa_y \phi_n \, \overline{\phi_n} d y \\
& \leq \frac{C}{\nu\tilde n^2} \| \sqrt{U_s^\nu} \tilde n \phi_n \|^2_{L^2} + \frac{C}{\nu^\frac12 \tilde n} \| \sqrt{U_s^\nu} \pa_y \phi_n \|_{L^2} \, \| \sqrt{U_s^\nu} \tilde n \phi_n \|_{L^2}\,.
\end{align*}
Here we have used $|\frac{1}{U_s^\nu (y)}|\leq C$ for $y\geq Y_0 \nu^\frac12$. Hence if $\nu^\frac12\tilde n$ is large enough, the term $\frac12 \int_0^\infty (\pa_y^2 U_s^\nu) |\phi_n|^2 d y$ is absorbed by the first term in the left-hand side, resulting in
\begin{align}\label{proof.prop.high.2}
\| \sqrt{U_s^\nu} u_n \|^2_{L^2} \leq \frac{2}{\tilde n} \, \Im \langle f_n, u_n \rangle\,.
\end{align}
The estimate \eqref{proof.prop.high.2} is coercive but degenerate near the boundary $y=0$. 
To recover the estimate near the boundary we use the real part of the energy identity \eqref{proof.prop.high.1'}.
To this end we recall the interpolation inequality \eqref{proof.prop.Airy.5}, which is formulated  in the rescaled variable $Y$.
In the original variable, it can be written as 
\begin{align}\label{proof.prop.high.3}
\| g \|^2_{L^2} \leq C \nu^\frac13 \| \sqrt{U_s^\nu} g \|^\frac43_{L^2} \| \pa_y g \|^\frac23_{L^2} + C \| \sqrt{U_s^\nu} g \|^2_{L^2} \,, \qquad g = g(y) \in H^1_0 (\R_+)\,.
\end{align}
This yields from \eqref{proof.prop.high.1'} that
\begin{align*}
\| u_n \|^2_{L^2} & \leq C\nu^\frac13 \| \sqrt{U_s^\nu} u_n \|^\frac43_{L^2} \big ( - \nu^{-1} \Re \langle u_{n,2} \pa_y U_s^\nu, u_{n,1} \rangle + \nu^{-1} \Re \langle f_n, u_n\rangle \big ) ^\frac13 + C \| \sqrt{U_s^\nu} u_n \|^2_{L^2} \,.
\end{align*}
By using the Hardy inequality and the divergence free condition we have 
\begin{align}\label{proof.prop.high.4}
|\Re \langle u_{n,2} \pa_y U_s^\nu, u_{n,1} \rangle| & \leq \| \frac{y^\frac12 \pa_y U_s^\nu}{\sqrt{U_s^\nu}}\|_{L^\infty} \| \frac{u_{n,2}}{y^\frac12} \|_{L^2} \| \sqrt{U_s^\nu} u_{n,1} \|_{L^2} \nonumber \\
& \leq C \nu^{-\frac14} \| \frac{Y^\frac12 \pa_Y U_s}{\sqrt{U_s}} \|_{L_Y^\infty} \| u_{n,2} \|_{L^2}^\frac12 \| \pa_y u_{n,2} \|_{L^2}^\frac12 \| \sqrt{U_s^\nu} u_{n,1} \|_{L^2}\nonumber \\
& \leq \frac{C\tilde n^\frac12}{\nu^\frac14} \|u_n \|_{L^2} \| \sqrt{U_s^\nu} u_n\|_{L^2}  \,.
\end{align}
Therefore,
\begin{align*}
\| u_n \|^2_{L^2} & \leq C\nu^\frac13 \| \sqrt{U_s^\nu} u_n \|^\frac43_{L^2} \bigg (  \nu^{-\frac54} \ \tilde n^\frac12   \| \sqrt{U_s^\nu} u_{n}\|_{L^2} \| u_n \|_{L^2}  + \nu^{-1}   |\Re \langle f_n, u_n\rangle| \bigg )  ^\frac13    + C \| \sqrt{U_s^\nu} u_n \|^2_{L^2} \\
& \leq C \nu^{-\frac{1}{12}} \tilde n^\frac16 \| \sqrt{U_s^\nu} u_n \|^\frac{5}{3}_{L^2} \| u_n \|^\frac13_{L^2}
+ C \| \sqrt{U_s^\nu} u_n \|^\frac43_{L^2}  |\Re \langle f_n, u_n\rangle|^\frac13+ C \| \sqrt{U_s^\nu} u_n \|^2_{L^2}\,,
\end{align*}
which gives from \eqref{proof.prop.high.2} and $\tilde n\geq \nu^{-\frac12}$,
\begin{align}
\| u_n \|^2_{L^2} & \leq C  ( \nu^{-\frac{1}{10}} \tilde n^\frac15   + 1 )  \|\sqrt{U_s^\nu} u_n \|^2 _{L^2} + C \| \sqrt{U_s^\nu} u_n \|^\frac43_{L^2}  |\Re \langle f_n, u_n\rangle|^\frac13 \nonumber \\
& \leq C \nu^{-\frac{1}{10}} \tilde n^{\frac15-1} |\Im \langle f_n, u_n \rangle| + C \tilde n^{-\frac23} |\Im \langle f_n,u_n\rangle |^\frac23 |\Re \langle f_n, u_n\rangle|^\frac13 \nonumber \\
& \leq C ( \nu^{-\frac{1}{10}} \tilde n^{-\frac45}  + \tilde n^{-\frac23} ) \| f_n \|_{L^2} \| u_n \|_{L^2} \nonumber\,, 
\end{align}
that is,
\begin{align}
\| u_n \|_{L^2} \leq C ( \nu^{-\frac{1}{10}} \tilde n^{-\frac45}  + \tilde n^{-\frac23} ) \| f_n \|_{L^2} \leq C \nu^{-\frac{1}{10}} \tilde n^{-\frac45} \| f_n \|_{L^2} \,.
\end{align}
Here we have used the condition $\tilde n\leq \delta_2^{-1} \nu^{-\frac34}$.
Then \eqref{proof.prop.high.4} and \eqref{proof.prop.high.2} imply
\begin{align}
|\Re \langle u_{n,2} \pa_y U_s^\nu, u_{n,1} \rangle| & \leq C\nu^{-\frac14} \| f_n \|^\frac12_{L^2} \| u_n \|^\frac32_{L^2}\,.
\end{align}
Thus the estimate for the derivatives is obtained from \eqref{proof.prop.high.1'}:
\begin{align*}
\|\pa_y u_n \|^2_{L^2} + \tilde n^2 \| u_n \|^2_{L^2} & \leq C\nu^{-\frac54}  \| f_n \|^\frac12_{L^2} \| u_n \|^\frac32_{L^2} + \nu^{-1} \|f_n \|_{L^2} \, \| u_n \|_{L^2} \\
& \leq C \big (\tilde n^{-\frac{6}{5}} \nu^{-\frac{7}{5}} + \tilde n^{-\frac45} \nu^{-\frac{11}{10}} \big ) \| f_n \|^2_{L^2} \\
& \leq C\tilde n^{-\frac65} \nu^{-\frac75} \| f_n \|^2_{L^2} \,.
\end{align*}
The proof of the a priori estimates is complete, which also implies the uniqueness. As for the existence, we first  replace the operator $-\nu (\pa_y^2 - \tilde n^2)$ by $-\nu (\pa_y^2 - \tilde n^2) + l$ with $l>0$, and  if $l$ is large enough then it is easy to show the unique existence of the solution for any $f_n\in L^2 (\R_+)^2$. One can check that the argument above for the a priori estimates is valid even for $l>0$ without any changes, and all of the a priori estimates are uniform in $l>0$. Thus, the existence of the solution for the case $l=0$ holds by the continuity. The proof is complete.
\end{proof}

\noindent
Proposition \ref{prop.high} shows Theorem \ref{thm.linear} for $\tilde n \geq \delta_0^\frac32 \nu^{-\frac34}$ as long as $\delta_2^{-1}\nu^{-\frac12} \leq \delta_0^\frac32 \nu^{-\frac34}$, which is valid for sufficiently small $\nu>0$. The proof of Theorem \ref{thm.linear} is complete.

\section{Nonlinear problem}\label{sec.nonlinear}

In this section we prove Theorem \ref{thm.main} based on the linear result in the previous section, Theorems \ref{thm.zero} and \ref{thm.linear}.
The proof relies on the fixed point theorem for the contraction map.
Let us denote by $X_{\nu,\epsilon}$ the closed convex set defined by
\begin{align}\label{proof.thm.main.1}
\begin{split}
& X_{\nu,\epsilon}  = \big \{ u \in X ~|~  \\
& \quad \|u \|_{X_\nu} :=  \| u_{0,1} \|_{L^\infty} +  \nu^{\frac14} \| \pa_y u_{0,1} \|_{L^2} + \sum_{n\ne 0} \| u_n \|_{L^\infty}   + \nu^{-\frac14} \| \mathcal{Q}_0 u\|_{L^2} +  \nu^\frac14 \| \nabla \mathcal{Q}_0 u \|_{L^2} \leq \epsilon \frac{\nu^\frac{1}{2}}{|\log \nu|^\frac12}  \big \}\,.
\end{split}
\end{align}
Then for $w\in X_{\nu,\epsilon}$ we define the map $\Psi [w]=u$ as the solution to the linear problem 
\begin{align}\label{eq.pS}
\begin{cases}
& U_s^\nu \pa_x u + u_2 \pa_y U_s^\nu {\bf e}_1 - \nu \Delta u + \nabla p =- w \cdot \nabla w   + f^\nu \,, \quad (x,y) \in \T_\kappa \times \R_+\,,\\
& {\rm div}\, u^\nu =0\,, \quad (x,y) \in \T_\kappa\times \R_+\,,\\
& u^\nu|_{y=0} =0\,.
\end{cases}
\end{align}
We observe that 
\begin{align*}
-w\cdot \nabla w = -w_{0,1} \pa_x \mathcal{Q}_0 w - \mathcal{Q}_0 w_2  \, \pa_y w_{0,1} {\bf e}_1 - \mathcal{Q}_0 w \cdot \nabla \mathcal{Q}_0 w\,,
\end{align*}
and therefore,
\begin{align*}
-\mathcal{P}_0 (w\cdot \nabla w ) = - \mathcal{P}_0 \big ( \mathcal{Q}_0 w \cdot \nabla \mathcal{Q}_0 w\big )  = - \pa_y \mathcal{P}_0 \big ( \mathcal{Q}_0 w_2 \, \mathcal{Q}_0 w \big )\,.
\end{align*}
Hence the zero mode of the source term in \eqref{eq.pS} is written as 
\begin{align}\label{proof.thm.main.3}
\mathcal{P}_0 \Big ( - w \cdot \nabla w  + f^\nu \Big ) = \pa_y \Big ( - \mathcal{P}_0 \big ( \mathcal{Q}_0 w_2 \, \mathcal{Q}_0 w \big )  \Big ) =: \pa_y H[w] \,.
\end{align}
Thus, Theorem \ref{thm.zero} shows 
\begin{align}\label{proof.thm.main.5}
\begin{split}
\| u_{0,1} \|_{L^\infty}  \leq \frac{1}{\nu} \| H[w]_1\|_{L^1} & \leq \frac{1}{\nu} \| \mathcal{Q}_0 w\|_{L^2}^2  \,,\\
\|\pa_y u_{0,1} \|_{L^2} \leq \frac{1}{\nu} \| H[w]_1\|_{L^2} & \leq \frac{1}{\nu}  \| \mathcal{Q}_0 w\|_{L^\infty} \| \mathcal{Q}_0 w\|_{L^2} \,,  \\
u_{0,1}|_{y=0} & =0\,.
\end{split}
\end{align}  
As for the nonzero mode, we see 
\begin{align*}
& \mathcal{Q}_0 \Big ( - w \cdot \nabla w + f^\nu \Big )  =  -w_{0,1} \pa_x \mathcal{Q}_0 w - \mathcal{Q}_0 w_2  \, \pa_y w_{0,1} {\bf e}_1 - \mathcal{Q}_0 \big ( \mathcal{Q}_0 w \cdot \nabla \mathcal{Q}_0 w \big ) + f^\nu \,.
\end{align*}
Therefore, Theorem \ref{thm.linear} implies, by applying the Parseval equality,
\begin{align}\label{proof.thm.main.6}
& \| \mathcal{Q}_0 u \|_{L^2}  \nonumber \\
&  \leq C \| -w_{0,1} \pa_x \mathcal{Q}_0 w - \mathcal{Q}_0 w_2  \, \pa_y w_{0,1} {\bf e}_1 - \mathcal{Q}_0 \big ( \mathcal{Q}_0 w \cdot \nabla \mathcal{Q}_0 w \big ) + f^\nu \|_{L^2} \nonumber \\
& \leq  C\big ( \| w_{0,1}\|_{L^\infty} \| \nabla \mathcal{Q}_0 w\|_{L^2} + \| \mathcal{Q}_0 w \|_{L^\infty} \| \pa_y w_{0,1} \|_{L^2} + \| \mathcal{Q}_0  w \|_{L^\infty} \| \nabla \mathcal{Q}_0 w\|_{L^2} + \| f^\nu \|_{L^2} \big )\,,
\end{align}
and similarly,
\begin{align}\label{proof.thm.main.7}
& \| \nabla \mathcal{Q}_0 u \|_{L^2} \nonumber \\
& \leq \frac{C}{\nu^\frac12} \big ( \| w_{0,1}\|_{L^\infty} \| \nabla \mathcal{Q}_0 w\|_{L^2} + \| \mathcal{Q}_0 w \|_{L^\infty} \| \pa_y w_{0,1} \|_{L^2} + \| \mathcal{Q}_0  w \|_{L^\infty} \| \nabla \mathcal{Q}_0 w\|_{L^2} + \| f^\nu \|_{L^2} \big )\,.
\end{align}
For $n \ne 0$, by using the identity 
\begin{align*}
& \mathcal{P}_n \Big ( - w \cdot \nabla w + f^\nu \Big )  =  -w_{0,1} \pa_x \mathcal{P}_n w - \mathcal{P}_n w_2  \, \pa_y w_{0,1} {\bf e}_1 - \mathcal{P}_n \big ( \mathcal{Q}_0 w \cdot \nabla \mathcal{Q}_0 w \big ) + \mathcal{P}_n f^\nu \,,
\end{align*}
we have from Theorem \ref{thm.linear},
\begin{align}\label{proof.thm.main.8}
& \| \mathcal{P}_n u \|_{L^\infty} \nonumber \\
& \leq 
\begin{cases}
& \displaystyle \frac{C}{|\tilde n|^\frac12 \nu^\frac{1}{4}} \big ( \| w_{0,1}\|_{L^\infty} \| \nabla \mathcal{P}_n w\|_{L^2} + \| \mathcal{P}_n w \|_{L^\infty} \| \pa_y w_{0,1} \|_{L^2} + \|   \mathcal{P}_n \big ( \mathcal{Q}_0 w \cdot \nabla \mathcal{Q}_0 w \big )\|_{L^2} + \| \mathcal{P}_n f^\nu \|_{L^2} \big )\,,\\
& \qquad {\rm if}\quad 0<|\tilde n|\leq \delta_*\nu^{-\frac34}\,,\\
& \displaystyle \frac{C}{|\tilde n|^\frac32 \nu} \big ( \| w_{0,1}\|_{L^\infty} \| \nabla \mathcal{P}_n w\|_{L^2} + \| \mathcal{P}_n w \|_{L^\infty} \| \pa_y w_{0,1} \|_{L^2} + \|   \mathcal{P}_n \big ( \mathcal{Q}_0 w \cdot \nabla \mathcal{Q}_0 w \big )\|_{L^2} + \| \mathcal{P}_n f^\nu \|_{L^2} \big )\,,\\
& \qquad {\rm if}\quad |\tilde n| >\delta_* \nu^{-\frac34}\,.
\end{cases}
\end{align}
Note that the $L^\infty$ estimate in the case $|\tilde n| \geq  \nu^{-\frac12}$ follows from (ii) and (iii) in Theorem \ref{thm.linear} combined with the interpolation $\|\mathcal{P}_n u \|_{L^\infty} \leq C \| \pa_y u_n \|_{L^2_y}^\frac12 \| u_n \|_{L^2_y}^\frac12$.
Thus, by the Parseval equality,
\begin{align}\label{proof.thm.main.9}
\sum_{n\ne 0} \| \mathcal{P}_n u \|_{L^\infty}  & \leq \frac{C}{\nu^\frac{1}{4}} \big ( |\log \nu|^\frac12 \| w_{0,1}\|_{L^\infty} \| \nabla \mathcal{Q}_0 w\|_{L^2} + \sum_{n \ne 0} \| \mathcal{P}_n w \|_{L^\infty} \| \pa_y w_{0,1} \|_{L^2}  \nonumber \\
& \qquad +  |\log \nu| ^\frac12 \|   \mathcal{Q}_0 \big ( \mathcal{Q}_0 w \cdot \nabla \mathcal{Q}_0 w \big )\|_{L^2} + |\log \nu|^\frac12 \| \mathcal{Q}_0 f^\nu \|_{L^2} \big ) \nonumber \\
\begin{split}
& \leq \frac{C}{\nu^\frac{1}{4}} \big ( |\log \nu|^\frac12 \| w_{0,1}\|_{L^\infty} \| \nabla \mathcal{Q}_0 w\|_{L^2} + \sum_{n \ne 0} \| \mathcal{P}_n w \|_{L^\infty} \| \pa_y w_{0,1} \|_{L^2}  \\
& \qquad + |\log \nu|^\frac12 \| \mathcal{Q}_0  w \|_{L^\infty} \| \nabla \mathcal{Q}_0 w\|_{L^2} + |\log \nu|^\frac12 \| f^\nu \|_{L^2} \big )\,.
\end{split}
\end{align}
We note that $\| \mathcal{Q}_0 w\|_{L^\infty} \leq C \sum_{n\ne 0} \| \mathcal{P}_n w \|_{L^\infty}$ holds. Collecting these, we have 
\begin{align}\label{proof.thm.main.10}
\| \Psi [w] \|_{X_\nu} \leq \frac{C|\log \nu|^\frac12}{\nu^\frac{1}{2}} \| w \|_{X_\nu}^2 + \frac{C|\log\nu|^\frac12}{\nu^\frac{1}{4}} \| f^\nu \|_{L^2}\,,
\end{align}
and since the nonlinear term $-w\cdot \nabla w$ is quadratic, we can show a similar bound for the difference $\Psi[w]-\Psi[w']$:
\begin{align}\label{proof.thm.main.11}
\| \Psi [w]-\Psi[w']\|_{X_\nu} \leq \frac{C|\log \nu|^\frac12}{\nu^\frac{1}{2}} (\| w\|_{X_\nu} + \| w' \|_{X_\nu} ) \| w- w' \|_{X_\nu}\,.
\end{align}
Hence $\Psi$ is a contraction map from $X_{\nu,\epsilon}$ into itself if $\epsilon$ is small enough and $\| f^\nu \|_{L^2}$ is small enough compared with $\epsilon |\log \nu|^{-1} \nu^{\frac{1}{4}+\frac12}$. 
By the standard fixed point theorem there exists a unique fixed point $u^*$ of $\Psi$ in $X_{\nu,\epsilon}$, which is the unique solution to \eqref{eq.pNS} in $X_{\nu,\epsilon}$. 
Note that the solution $u^*$ satisfies $\| u^*\|_{X_\nu} \leq C |\log \nu|^\frac12 \nu^{-\frac{1}{4}}\| f^\nu \|_{L^2}$, which gives \eqref{est.thm.main.1}. Since 
$$g=-U_s^\nu \pa_x u^*- u_2^* \pa_y U_s^\nu {\bf e}_1 - u^* \cdot \nabla u^* + f^\nu$$
 belongs to $L^2 (\T_\kappa\times \R_+)^2$, the elliptic regularity of the Stokes equations also implies $\nabla^2 u^*\in L^2 (\T_\kappa \times \R_+)$ and $\nabla p^* \in L^2(\T_\kappa \times \R_+)^2$.The proof is complete.

{\small

}

\end{document}